\newcommand{\be}{\begin{eqnarray}}
\newcommand{\ben}{\begin{eqnarray*}}
\newcommand{\en}{\end{eqnarray}}
\newcommand{\enn}{\end{eqnarray*}}
\newtheorem{theorem}{Theorem}[section]
\newtheorem{lemma}{Lemma}[section]
\newtheorem{prp}[theorem]{Proposition}
\newtheorem{thm}[theorem]{Theorem}
\newtheorem{cor}[theorem]{Corollary}
\newtheorem{dfn}{Definition}[section]
\newtheorem{remark}{Remark}
\definecolor{rr}{rgb}{1,0.5,0}
\begin{document}
\renewcommand{\theequation}{\arabic{section}.\arabic{equation}}
\begin{titlepage}
\title{\bf Semilinear stochastic partial differential equations: central limit theorem and moderate deviations
\thanks{This research is supported partially by National Natural Science Foundation of China (NSFC) (No. 11801032, 61873325, 11831010), Key Laboratory of Random Complex Structures and Data Science, Academy of Mathematics and Systems Science, Chinese Academy of Sciences(No. 2008DP173182), China Postdoctoral Science Foundation funded project (No. 2018M641204),  Southern University of Science and Technology Start up fund Y01286120.}}
\author{Rangrang Zhang$^{1,}$\thanks{Corresponding author},\ \   Jie Xiong$^{2}$\\
{\small $^1$  School of  Mathematics and Statistics,
Beijing Institute of Technology, Beijing, 100081, China}\\
{\small $^2$  Department of Mathematics, Southern University of Science and Technology, Shenzhen, 518055, China .}\\
( {\sf rrzhang@amss.ac.cn}\ \ {\sf xiongj@sustech.edu.cn})}
\date{}
\end{titlepage}
\maketitle

\noindent\textbf{Abstract}: In this paper, we establish a central limit theorem (CLT) and the moderate deviation principles (MDP) for a class of semilinear stochastic partial differential equations driven by multiplicative noise on a bounded domain. The main results can be applied to stochastic partial differential equations of various types such as the stochastic Burgers equation and the reaction-diffusion equations perturbed by space-time white noise.

\noindent \textbf{AMS Subject Classification}: Primary 60H15; Secondary 60F05, 60F10.

\noindent\textbf{Keywords}: semilinear partial differential equations; space-time white noise; central limit theorem; moderate deviation principles.

\section{Introduction}
In this paper, we are concerned with the following semilinear stochastic partial differential equations (SPDE):
\begin{eqnarray}\label{e-1}
\frac{\partial u(t,x)}{\partial t}=\frac{\partial^2 u(t,x)}{\partial x^2}+b(t,x,u(t,x))+\frac{\partial g(t,x, u(t,x))}{\partial x}+\sigma(t,x,u(t,x))\frac{\partial^2}{\partial t\partial x}W(t,x)
\end{eqnarray}
with Dirichlet boundary condition
\begin{eqnarray*}
u(t,0)=u(t,1)=0, \quad t\in [0,T]
\end{eqnarray*}
and the initial condition
\begin{eqnarray*}
u(0,x)=f(x)\in L^2([0,1]),
\end{eqnarray*}
where $W(t,x)$ denotes the Brownian sheet on a filtered probability space $(\Omega, \mathcal{F}, \{\mathcal{F}_t\}, P)$ with expectation $E$. The functions $b=b(t,x,r)$, $g=g(t,x,r)$, $\sigma=\sigma(t,x,r)$ are Borel functions of $(t,x,r)\in \mathbb{R}^+\times [0,1]\times \mathbb{R}$. Linear growth on $b$ and quadratic growth on $g$ are assumed in Section \ref{sec-1}. Hence, the semilinear SPDE (\ref{e-1}) contains both the stochastic Burgers equation and the stochastic reaction-diffusion equations as special cases.
As a result, it attracts substantial research interests. There is an extensive literature about the semilinear SPDE (\ref{e-1}). For example, the existence and uniqueness of solutions to (\ref{e-1}) in the space $C([0,T]; L^2([0,1]))$ was studied by Gy\"{o}ngy in \cite{G98}. Foondun and Setayeshgar \cite{FS17} proved the large deviation principles (LDP) of the strong solution to (\ref{e-1}) holds uniformly on compact subsets of $C([0,T]; L^2([0,1]))$. Moreover, the ergodic theory of (\ref{e-1}) was studied by Dong and Zhang in \cite{DZ18}, where they show the existence and uniqueness of invariant measures of (\ref{e-1}). If the condition on $g$ is strengthen to be Lipschitz, Zhang \cite{Z18} proved Harnack inequalities for (\ref{e-1}) by using coupling method.

The purpose of this paper is to investigate deviations of the strong solution $u^\varepsilon$ of the semilinear SPDE (see (\ref{eqq-9})) from the solution $u^0$ of the deterministic equation (see (\ref{eqq-10})), as $\varepsilon$ decreases to 0.
That is, we seek the asymptotic behavior of the trajectory,
\[
X^\varepsilon(t)=\frac{1}{\sqrt{\varepsilon}\lambda(\varepsilon)}(u^\varepsilon-u^0)(t), \quad t\in[0,T],
\]
where $\lambda(\varepsilon)$ is some deviation scale which strongly influences the asymptotic behavior of $X^\varepsilon$. Concretely, three cases are involved:
\begin{description}
  \item[(1)] The case $\lambda(\varepsilon)=\frac{1}{\sqrt{\varepsilon}}$ provides LDP, which has been proved by \cite{FS17}.
  \item[(2)] The case $\lambda(\varepsilon)=1$  provides the central limit theorem (CLT). We will show that $X^\varepsilon$ converges to a solution of a stochastic equation, as $\varepsilon$ decrease to 0 in Section \ref{sec-2}.
  \item[(3)] To fill in the gap between the CLT scale ($\lambda(\varepsilon)=1$) and the large deviations scale ($\lambda(\varepsilon)=\frac{1}{\sqrt{\varepsilon}}$), we will study the so-called moderate deviation principle (MDP) in Section \ref{sec-3}. Here, the deviations scale satisfies
      \begin{eqnarray}\label{e-43}
      \lambda(\varepsilon)\rightarrow +\infty,\ \sqrt{\varepsilon}\lambda(\varepsilon)\rightarrow 0\quad {\rm{as}} \ \varepsilon\rightarrow 0.
      \end{eqnarray}
\end{description}

MDP arises in the theory of statistical inference naturally providing us with the rate of convergence and a useful method for constructing asymptotic confidence intervals ( see, e.g. \cite{E-1,I-K,K,GXZ} and references therein).
Similar to LDP, the proof of moderate deviations is mainly based on the weak convergence approach, which is introduced by Dupuis and Ellis in \cite{DE}. The key idea is to prove some variational representation formula about
the Laplace transform of bounded continuous functionals, which will lead to proving an equivalence between
the Laplace principle and LDP. In particular, for Brownian functionals, an elegant variational
representation formula has been established by Bou\'{e}, Dupuis \cite{MP} and Budhiraja, Dupuis \cite{BD}.

Up to now, there are a series of results about the central limit theorem and moderate deviations for fluid dynamics models driven by white noise in time. For example, Wang et al. \cite{WZZ} established the CLT and MDP for 2D Navier-Stokes equations driven by multiplicative Gaussian noise in the space $C([0,T];H)\cap L^2([0,T];V)$ and Zhang et al. \cite{ZZG} proved that such results hold for 2D primitive equations. Moreover,
Dong et al. \cite{DXZZ} proved the MDP for 2D Navier-Stokes equations driven by multiplicative L\'{e}vy noises in $D([0,T];H)\cap L^2([0,T];V)$.
However, there are few results on CLT and MDP for stochastic partial differential equations driven by space-time white noise. Recently, Belfadli et al. \cite{B-B-M} claimed moderate deviations for stochastic Burgers equation. However, we cannot adapt their method to our model since we do not see how to apply Burkholder-Davis-Gundy inequality to a stochastic integral of the form $\xi(s,x)\equiv\int^t_0\int f(s,t,x,y)W(ds dy)$ to get an estimate on the expectation of the supremum of $\xi$ in $(t,x)$ when the integrand $f$ depends on $t$ and does not have a semimartingale property with respect to parameter $x$.  When this paper was written, we  noticed the independent work by Hu et al. \cite{Hu} for the same model. However, how do they handle the afore mentioned difficulty  is not clear to us.

The purpose of this paper is two-fold. The first part is to show  $X^\varepsilon$ satisfies the CLT in probability in $C([0,T]; L^2([0,1]))$.
Compared with stochastic partial differential equations driven by white noise in time, there are some difficulties when dealing with such equations driven by space-time white noise. Most notably, as we already mentioned in the last paragraph, it is not trivial to obtain estimate of the expectation of the
supremum of the stochastic integral when the integrand also depends on the time parameter. More precisely, let $Z^{\varepsilon}(t,x)=\frac{u^{\varepsilon}(t,x)-u^{0}(t,x)}{\sqrt{\varepsilon}}-Y$, referring to (\ref{eee-21}), it satisfies
\begin{eqnarray*}\notag
Z^{\varepsilon}(t,x)&=&\int^t_0\int^1_0G_{t-s}(x,y)\Big(\frac{b(u^{\varepsilon})-b(u^0)}{\sqrt{\varepsilon}}-\partial_rb(u^0)Y\Big)dsdy\\
\notag
&&\ -\int^t_0\int^1_0\partial_yG_{t-s}(x,y)\Big(\frac{g(u^{\varepsilon})-g(u^0)}{\sqrt{\varepsilon}}-\partial_rg(u^0)Y\Big)dsdy\\
\notag
&&\ +\int^t_0\int^1_0G_{t-s}(x,y)\Big(\sigma(s,y,u^{\varepsilon}(s,y))-\sigma(s,y,u^0(s,y))\Big)W(dyds)\\
&:=& I^{\varepsilon}_1(t,x)+I^{\varepsilon}_2(t,x)+I^{\varepsilon}_3(t,x).
\end{eqnarray*}
Our aim is to prove $\sup_{t\in [0,T]}\|Z^{\varepsilon}(t)\|^2_{L^2([0,1])}$ converges to $0$ in probability, so we need to show $\sup_{t\in [0,T]}\|I^{\varepsilon}_i(t)\|^2_{L^2([0,1])}$ converges to $0$ in probability for $i=1,2,3$. Since either $G_{t-s}(x,y)$  or $\partial_yG_{t-s}(x,y)$ is contained in $I^{\varepsilon}_i(t,x)$ and they are both not increasing with respect to $t$, we can not take supremum of $t\in [0,T]$ directly in front of $\|I^{\varepsilon}_i(t,x)\|^2_{L^2([0,1])}$, $i=1,2$. In particular, to estimate
$E\sup_{t\in [0,T]}\|I^{\varepsilon}_3(t)\|_{L^2([0,1])}$,
 the Burkholder-Davis-Gundy inequality is not applicable because the dependence of the integrand on $t$. To overcome this difficulty, we employ the Garsia lemma from \cite{Xiong}, which gives a way to make estimates of $\sup_{t\in [0,T]}\|I^{\varepsilon}_i(t)\|^2_{L^2([0,1])}$. However, it requires an appropriate continuity property of $\|I^{\varepsilon}_i(t)\|^2_{L^2([0,1])}$ with respect to $t$. In order to achieve this condition, some delicate a priori estimates are necessary (see Section \ref{sec-4}).
The second part is to prove MDP for $X^\varepsilon$ in the space $C([0,T]; L^2([0,1]))$, which is equivalent  to proving that $X^\varepsilon$ satisfies a large deviation principle in $C([0,T]; L^2([0,1]))$ with $\lambda(\varepsilon)$ satisfying (\ref{e-43}). The proof will be based on the weak convergence approach introduced by Bou\'{e} and Dupuis \cite{MP}, Budhiraja and Dupuis \cite{BD}. Except difficulties mentioned above, the proof of some tightness results in $C([0,T]; L^2([0,1]))$  are also nontrivial.

\

This paper is organized as follows. The mathematical formulation of the semilinear stochastic partial differential equations is presented in Sect. 2. Some delicate a priori estimates are given in Sect. 3. In Sect. 4, the central limit theorem is established. Finally, the moderate deviation principles is proved in Sect. 5.

Throughout the whole paper, the constant $C$ is different from line to line.
\section{Framework}
Let $L^p([0,1]), p\in(0,\infty]$ be the Lebesgue space, whose norm is denoted by $\|\cdot\|_{L^p}$.
In particular, denote that $H=L^2([0,1])$ with the corresponding norm  $\|\cdot\|_H$ and inner product $(\cdot,\cdot)$.
Define an operator $A:= \frac{\partial^2}{\partial x^2}$. Let $G_{t}(x,y)=G(t,x,y), t\geq0, x,y\in [0,1]$ be the Green function for the operator $\partial_t-A$  with Dirichlet boundary condition. Then, it satisfies that
\begin{eqnarray}\label{e-29}
\partial_t G_{t}(x,y)=AG_{t}(x,y).
\end{eqnarray}
\subsection{Assumptions}\label{sec-1}
We adopt assumptions from  \cite{FS17} or \cite{G98}.
The functions $b=b(t,x,r)$, $g=g(t,x,r)$, $\sigma=\sigma(t,x,r)$ are Borel measurable on $(t,x,r)\in \mathbb{R}^+\times [0,1]\times \mathbb{R}$ and  satisfy the following conditions:
\begin{description}
  \item[(H1)]  $b$  is of linear growth, $g$ is of quadratic growth and $\sigma$ is bounded. That is, there exists a constant $K>0$ such that for all $(t,x,r)\in[0,T]\times[0,1]\times \mathbb{R}$, we have
      \begin{equation}
 |b(t,x,r)|\leq K(1+|r|),\quad |\sigma(t,x,r)|\leq K,
\end{equation}
and
 \begin{equation}
       |g(t,x,r)|\leq K(1+|r|^2).
    \end{equation}

  \item[(H2)] $\sigma$ is Lipschitz, $b$ and $g$ are locally Lipschitz with linearly growing Lipschitz constant. That is, there exists a constant $L$ such that for all $(t,x,r_1,r_2)\in[0,T]\times[0,1]\times \mathbb{R}^2$, we have
      \begin{eqnarray*}
      |b(t,x,r_1)-b(t,x,r_2)|&\leq& L(1+|r_1|+|r_2|)|r_1-r_2|,\\
      |g(t,x,r_1)-g(t,x,r_2)|&\leq& L(1+|r_1|+|r_2|)|r_1-r_2|,\\
      |\sigma(t,x,r_1)-\sigma(t,x,r_2)|&\leq& L|r_1-r_2|.
      \end{eqnarray*}
\end{description}
\begin{dfn}\label{dfn-1}
 A random field $u$ is a solution to (\ref{e-1}) if $u=\{u(t,\cdot), t\in \mathbb{R}^+\}$ is an $H-$valued continuous $\mathcal{F}_t-$adapted random field with initial value $f\in H$ and satisfying for all $t\geq 0$, $\phi\in C^{2}([0,1])$ with $\phi(0)=0$, $\phi(1)=0$,
\begin{eqnarray}\notag
&&\int^1_0u(t,x)\phi(x)dx=\int^1_0f(x)\phi(x)dx+\int^t_0\int^1_0 u(s,x)\frac{\partial^2 \phi(x)}{\partial x^2}dxds
+\int^t_0\int^1_0b(s,x, u(s,x))\phi(x)dxds\\
\label{e-2}
&& \quad -\int^t_0\int^1_0g(s,x,u(s,x))\frac{\partial \phi(x)}{\partial x}dxds+\int^t_0\int^1_0\sigma(s,x,u(s,x))\phi(x)W(dxds), \ P-a.s.
\end{eqnarray}

\end{dfn}

The existence and uniqueness of the solution to (\ref{e-1}) is established in \cite{G98}.
\begin{thm}\label{thm-1}
Under assumptions (H1)-(H2), there exists a unique solution $u$ to SPDE  (\ref{e-1}).
\end{thm}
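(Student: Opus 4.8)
The plan is to recast (\ref{e-1}) in its mild (evolution) form and then combine a truncation procedure with a Picard iteration and a uniform a priori estimate. First I would show that a random field $u$ as in Definition \ref{dfn-1} satisfies (\ref{e-2}) if and only if it satisfies the integral equation
\begin{eqnarray*}
u(t,x)&=&\int^1_0 G_t(x,y)f(y)\,dy+\int^t_0\int^1_0 G_{t-s}(x,y)\,b(s,y,u(s,y))\,dy\,ds\\
&&\ -\int^t_0\int^1_0 \partial_y G_{t-s}(x,y)\,g(s,y,u(s,y))\,dy\,ds+\int^t_0\int^1_0 G_{t-s}(x,y)\,\sigma(s,y,u(s,y))\,W(dy\,ds),
\end{eqnarray*}
using (\ref{e-29}), the semigroup property of $G$, and a stochastic Fubini theorem. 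The equivalence, and the fact that each term is well defined, rests on the classical kernel bounds $\int^t_0\int^1_0 G_s(x,y)^2\,dy\,ds\le C\sqrt{t}$, $\int^1_0|\partial_y G_s(x,y)|\,dy\le Cs^{-1/2}$, $\int^1_0|\partial_y G_s(x,y)|^2\,dy\le Cs^{-3/2}$, together with $\int^t_0\int^1_0|\partial_y G_s(x,y)|^q\,dy\,ds<\infty$ for $q<3/2$, which via H\"older's inequality render the $g$-term meaningful in spite of its quadratic growth.

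Next, for each $n\in\N$ introduce the truncations $b_n(t,x,r)=b(t,x,(-n)\vee r\wedge n)$ and $g_n(t,x,r)=g(t,x,(-n)\vee r\wedge n)$; by (H1)--(H2) these are globally Lipschitz in $r$ (with constants depending on $n$) and of linear growth, while $\sigma$ is already globally Lipschitz and bounded. Running a Picard iteration for the truncated mild equation in the Banach space of predictable random fields $v$ with $\sup_{t\le T}E\|v(t)\|^2_H<\infty$, equipped with an exponentially weighted norm --- the $b$-term contributing a time kernel $(t-s)^{-1/2}$ and the $g$-term a kernel $(t-s)^{-3/4}$, both integrable on $[0,t]$ --- one checks the iteration map is a contraction and gets a unique solution $u_n$ on $[0,T]$, adapted by construction. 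By this uniqueness the family $\{u_n\}$ is consistent: with $\tau_n:=\inf\{t\le T:\ \sup_{x}|u_n(t,x)|\ge n\}$ (set $\tau_n:=T$ if this set is empty) one has $u_n=u_m$ on $[0,\tau_n\wedge\tau_m]$.

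The heart of the matter is then a uniform a priori estimate, $E\sup_{t\le T}\|u_n(t)\|^p_{L^p([0,1])}\le C$ for $p$ large --- and ultimately a bound on $E\sup_{t,x}|u_n(t,x)|$ --- which forces $\tau_n\uparrow T$ almost surely and lets one define $u:=\lim_n u_n$, a global solution. This is where the quadratic growth of $g$ is the obstacle: a direct $L^2$ energy estimate produces the term $\int^1_0 g(s,x,u)\,\partial_x u\,dx$ which, when $g$ is free of explicit $x$-dependence, collapses under the Dirichlet boundary condition once $g$ is written through its antiderivative $\widetilde g(s,r)=\int^r_0 g(s,v)\,dv$, and otherwise leaves a residual term in $\partial_x g$ to be controlled separately. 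Working directly in the mild form instead, one splits the time integral, uses the factorization method and H\"older's inequality in $y$ (exploiting $\partial_y G\in L^q$ for $q<3/2$) to dominate the $g$-contribution by $C\int^t_0(t-s)^{-\theta}\big(1+\sup_{r\le s}\|u_n(r)\|^2_{L^{2p}}\big)\,ds$ with $\theta<1$, bootstraps from $L^2$ to $L^p$, and closes the estimate with a singular (Volterra) Gronwall lemma; the stochastic convolution is treated by the Burkholder--Davis--Gundy inequality and, since its integrand depends on the time parameter, a Garsia-type argument as in \cite{Xiong}, which simultaneously yields the space-time (in particular $H$-valued) continuity demanded in Definition \ref{dfn-1}. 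I expect this a priori bound to be the main difficulty.

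Finally, uniqueness of the untruncated solution follows by the same localization: given two solutions $u$ and $v$, introduce stopping times on which both have bounded sup-norm, so that $b$ and $g$ act as globally Lipschitz functions; estimating $E\|u(t\wedge\tau)-v(t\wedge\tau)\|^2_H$ in the mild form and invoking a singular Gronwall inequality gives $u=v$ up to $\tau$, and letting $\tau\uparrow T$ finishes. The whole scheme is by now standard; the one genuinely delicate point --- the global-in-time a priori estimate taming the quadratic nonlinearity $g$ --- is precisely the content of \cite{G98}.
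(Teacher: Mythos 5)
The paper does not actually prove Theorem \ref{thm-1}: the statement is imported from Gy\"ongy \cite{G98}, and even the mild-form reformulation you begin with is only quoted (the Remark following the theorem cites Proposition 3.5 of \cite{G98}). Your outline --- equivalence with the mild equation via the standard kernel bounds, truncation of $b$ and $g$ to obtain globally Lipschitz coefficients, Picard iteration in a weighted norm, consistency of the truncated solutions, an a priori bound forcing the localizing stopping times up to $T$, and uniqueness by localization plus a singular Gronwall argument --- is essentially the route of that reference, so you are reconstructing the cited proof rather than deviating from anything done in this paper.

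Two details of your sketch would, however, need repair. First, your stopping times $\tau_n=\inf\{t\le T:\ \sup_x|u_n(t,x)|\ge n\}$ localize by the sup-norm; with $f$ only in $L^2([0,1])$ the solution lives in $C([0,T];H)$ and $\sup_x|u_n(t,x)|$ need not be finite near $t=0$, so this localization is both unnecessary and technically awkward --- \cite{G98}, and this paper itself later (see the stopping time (\ref{eee-30})), localize by the $H$-norm. Second, what you call the heart of the matter --- uniform moment bounds $E\sup_{t\le T}\|u_n(t)\|^p_{L^p}\le C$ and even a bound on $E\sup_{t,x}|u_n(t,x)|$ --- is stronger than what the method yields under (H1)--(H2): since $g$ is only of quadratic growth and may depend explicitly on $x$, the energy cancellation you invoke is not available in general, and what one actually obtains (and what suffices to send the $H$-norm stopping times to $T$, since a.s.\ finiteness of the localizing norm is enough) is boundedness in probability of $\sup_{t\le T}\|u(t)\|_H$, precisely the statement (\ref{eqq-13}) this paper quotes from \cite{G98}. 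Asking for expectations of sup-norms is an overreach that the scheme you describe does not deliver. Since you defer the decisive estimate to \cite{G98} in any case, the sketch is acceptable as an outline, but these two points should be brought in line with the cited argument.
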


\begin{remark}
Referring to Proposition 3.5 in \cite{G98}, under conditions in Theorem \ref{thm-1}, (\ref{e-2}) is equivalent to the following form: for all $t\geq 0$ and almost surely $\omega\in \Omega$,
 \begin{eqnarray}
&&u(t,x)=\int^1_0 G_t(x,y)f(y)dy +\int^t_0\int^1_0 G_{t-s}(x,y)b(s,y,u(s,y))dyds\\
\label{e-3}
&&\quad \quad  -\int^t_0\int^1_0 \partial_y G_{t-s}(x,y)g(s,y,u(s,y))dyds+\int^t_0\int^1_0G_{t-s}(x,y)\sigma(s,y,u(s,y))W(dyds)\notag
\end{eqnarray}
for almost every $x\in [0,1]$.
\end{remark}
In order to establish the CLT and MDP for (\ref{e-1}), we need some additional conditions on $b$ and $g$.
\begin{description}
  \item[(H3)] The  partial derivatives of $b$ and $g$ in $r$ are both of linear growth and Lipschitz. There exists a constant $K$ such that for any $(t,x,r)\in[0,T]\times[0,1]\times \mathbb{R}$ ,
\begin{eqnarray}\label{eqqq-2}
|\partial_rb(t,x,r)|\leq K(1+|r|),\quad |\partial_rg(t,x,r)|\leq K(1+|r|),\quad |\partial^2_rg(t,x,r)|\leq K,
\end{eqnarray}
and there exists a constant $L>0$ such that for all  $(t,x,r_1,r_2)\in[0,T]\times[0,1]\times \mathbb{R}^2$, we have
\begin{eqnarray}\label{eqqq-1}
|\partial_rb(t,x,r_1)-\partial_rb(t,x,r_2)|\leq L|r_1-r_2|,\quad |\partial_rf(t,x,r_1)-\partial_rf(t,x,r_2)|\leq L|r_1-r_2|.
\end{eqnarray}

For simplicity, we assume constants $K, L$ in (H3) are the same with those in (H1)-(H2).
\end{description}

\subsection{Properties of Green functions}
Referring to \cite{Wu}, the following facts will be used throughout this article:
\begin{description}
  \item[(1)] $\int_{\mathbb{R}}G_t(x,y)dy=1, \quad \int_{\mathbb{R}}G^2_t(x,y)dy=(2\pi t)^{-\frac{1}{2}}, \quad \forall t\in [0, \infty),  \forall x\in \mathbb{R}.$
  \item[(2)] $G_t(x,y)=G_t(y,x)$, \quad $t\in [0, \infty),\ x, y\in \mathbb{R}.$
  \item[(3)] $\int^1_0G_{t}(x,y)G_{s}(y,z)dy=G_{t+s}(x,z),\quad {\rm{for}}\ t,s\geq0,\  x,y, z\in [0,1].$
  \item[(4)] For any $m\leq 1, n\leq 2$, there exist $C,\tilde{C}>0$ such that
  \begin{eqnarray}\label{eq-21}
  \Big|\frac{\partial^m}{\partial t^m}\frac{\partial^n}{\partial y^n}G_t(x,y)\Big|\leq Ct^{-\frac{1+2m+n}{2}}e^{-\tilde{C}\frac{(x-y)^2}{t}}, \quad \forall t\in (0, \infty),\  \forall x,y\in \mathbb{R}.
  \end{eqnarray}
\end{description}
A particular case for \textbf{(4)} is $m=0, n=1$, in this case, we get
\begin{eqnarray}\label{eqq-4}
\partial_y G_t(x,y)\leq Ct^{-1}.
\end{eqnarray}
Referring to (3.13) in \cite{Walsh}, for $0<r<3$, it holds that
\begin{eqnarray}\label{eqq-5}
\int^1_0G^r_{t}(x,y)dy\leq Ce^{-tr}t^{\frac{1}{2}-\frac{1}{2}r}\leq Ct^{\frac{1}{2}-\frac{1}{2}r}.
\end{eqnarray}
Based on \textbf{(4)},  we deduce that for $0<r<\frac{3}{2}$, it holds that
\begin{eqnarray}\label{eqq-6}
\int^1_0|\partial_yG_t(x,y)|^rdy\leq Ct^{\frac{1}{2}-r}.
\end{eqnarray}
Moreover,
\begin{eqnarray}\label{eqq-5-1}
\sup_{x\in [0,1]}\int^s_0\int^1_0|G_{t-u}(x,y)-G_{s-u}(x,y))|^rdydu\leq  C|t-s|^{\frac{3-r}{2}}, \ 1<r<3.
\end{eqnarray}
and
\begin{eqnarray}\label{eq-29}
\sup_{x\in [0,1]}\int^s_0\int^1_0|\partial_yG_{t-u}(x,y)-\partial_yG_{s-u}(x,y))|^rdydu\leq  C|t-s|^{\frac{3}{2}-r}, \ 1<r<\frac{3}{2}.
\end{eqnarray}

For a transition kernel $H(r,t;x,y)$, we define the linear operator $J$ by
\begin{eqnarray}\label{e-15}
J(v)(t,x)=\int^t_0\int^1_0H(r,t;x,y)v(r,y)dydr, \ t\in [0,T],\ x\in[0,1]
\end{eqnarray}
for every $v\in L^{\infty}([0,T];L^1([0,1]))$.

Referring to \cite{G98}, we have the following heat kernel estimate, which is very crucial to our proof.
\begin{lemma}\label{lem-1}
Let $J$ is defined by $H(s,t;x,y)=G_{t-s}(x,y)$ or by $H(s,t;x,y)=\partial_y G_{t-s}(x,y)$ in (\ref{e-15}). Let $\rho\in[1,\infty]$, $q\in[1,\rho)$ and set $\kappa=1+\frac{1}{\rho}-\frac{1}{q}$. Then $J$ is a bounded linear operator from $L^{\gamma}([0,T];L^q([0,1]))$ into $C([0,T];L^{\rho}([0,1]))$ for $\gamma>2\kappa^{-1}$. Moreover, for any $T\geq 0$, there is $C>0$ such that
\begin{eqnarray}\label{e-16}
\|J(v)(t,\cdot)\|_{L^{\rho}}\leq C\int^t_0(t-s)^{\frac{\kappa}{2}-1}\|v(s,\cdot)\|_{L^q}ds.
\end{eqnarray}
\end{lemma}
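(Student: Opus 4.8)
The plan is to prove the estimate (\ref{e-16}) first, since the mapping property (boundedness of $J$ from $L^\gamma([0,T];L^q)$ into $C([0,T];L^\rho)$) follows from it together with a standard continuity argument. Fix $t\in[0,T]$ and $v\in L^\gamma([0,T];L^q([0,1]))$. I would start from the definition (\ref{e-15}), apply Minkowski's integral inequality in $L^\rho([0,1])$ to pull the $L^\rho_x$-norm inside the time integral, obtaining
\[
\|J(v)(t,\cdot)\|_{L^\rho}\le\int_0^t\Big\|\int_0^1 H(s,t;x,y)v(s,y)\,dy\Big\|_{L^\rho_x}ds.
\]
For the inner spatial convolution I would invoke Young's convolution inequality on $[0,1]$ in the form $\|H(s,t;\cdot,\cdot)\ast v(s,\cdot)\|_{L^\rho}\le\|H(s,t;x,\cdot)\|_{L^p}\|v(s,\cdot)\|_{L^q}$ with the Hölder relation $1+\tfrac1\rho=\tfrac1p+\tfrac1q$, i.e. $\tfrac1p=1+\tfrac1\rho-\tfrac1q=\kappa$. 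The kernel norm $\|H(s,t;x,\cdot)\|_{L^p}$ is then controlled by the Green-function bounds: in the case $H=G_{t-s}$ we use (\ref{eqq-5}) (with $r=p=1/\kappa$, legitimate because $\kappa>0$ and $1/\kappa<3$ after checking the admissible range of $\rho,q$), giving $\|G_{t-s}(x,\cdot)\|_{L^p}\le C(t-s)^{\frac{1}{2p}-\frac12}=C(t-s)^{\frac{\kappa}{2}-\frac12}$; and in the case $H=\partial_yG_{t-s}$ we use (\ref{eqq-6}) with $r=p$, giving $\|\partial_yG_{t-s}(x,\cdot)\|_{L^p}\le C(t-s)^{\frac{1}{2p}-1}=C(t-s)^{\frac{\kappa}{2}-1}$. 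Since $\kappa\le1$ always, $\tfrac{\kappa}{2}-1\le\tfrac{\kappa}{2}-\tfrac12$, so the weaker (larger) exponent $(t-s)^{\frac{\kappa}{2}-1}$ dominates near $s=t$ in both cases, which yields (\ref{e-16}) uniformly in $x$ and hence in the $L^\rho_x$-norm.

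Next I would deduce boundedness of $J$ into $L^\infty([0,T];L^\rho)$ by applying Hölder's inequality in the time variable to the right side of (\ref{e-16}): with $\gamma'$ the conjugate exponent of $\gamma$,
\[
\|J(v)(t,\cdot)\|_{L^\rho}\le C\Big(\int_0^t(t-s)^{(\frac{\kappa}{2}-1)\gamma'}ds\Big)^{1/\gamma'}\|v\|_{L^\gamma([0,T];L^q)},
\]
and the time integral is finite precisely when $(\tfrac{\kappa}{2}-1)\gamma'>-1$, which rearranges to $\gamma>\tfrac{2}{\kappa}$ — exactly the hypothesis. This gives the uniform bound $\sup_{t\in[0,T]}\|J(v)(t,\cdot)\|_{L^\rho}\le C\,T^{\theta}\|v\|_{L^\gamma([0,T];L^q)}$ for a suitable $\theta>0$.

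Finally, to upgrade from $L^\infty([0,T];L^\rho)$ to $C([0,T];L^\rho)$, I would establish continuity in $t$. By density it suffices to prove equicontinuity; for $0\le s<t\le T$ I would split $J(v)(t,\cdot)-J(v)(s,\cdot)$ into the ``near-diagonal'' piece $\int_s^t H(r,t;x,y)v(r,y)\,dy\,dr$, estimated as above by $C\big(\int_s^t(t-r)^{(\frac{\kappa}{2}-1)\gamma'}dr\big)^{1/\gamma'}\|v\|_{L^\gamma L^q}\to0$ as $t-s\to0$, and the ``regular'' piece $\int_0^s (H(r,t;x,y)-H(r,s;x,y))v(r,y)\,dy\,dr$, for which the kernel difference is controlled by (\ref{eqq-5-1}) or (\ref{eq-29}) respectively, again via Young and Hölder, producing a bound of the form $C|t-s|^{\delta}\|v\|_{L^\gamma L^q}$ with $\delta>0$. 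Together these show $t\mapsto J(v)(t,\cdot)$ is (uniformly) continuous into $L^\rho$, completing the proof. The main obstacle I anticipate is bookkeeping the exponents: one must check that the choices $p=1/\kappa$ and $r=p$ fall within the admissible ranges of (\ref{eqq-5}) and (\ref{eqq-6}) ($0<r<3$ and $0<r<\tfrac32$ respectively) for all $\rho\in[1,\infty]$, $q\in[1,\rho)$ — in particular handling the endpoint $\rho=\infty$, $q$ close to $1$ where $\kappa$ is close to $0$ and $p$ is large — and that the singularity $(t-s)^{\frac{\kappa}{2}-1}$ is integrable in $s$ against $L^\gamma$ data, which is precisely where the sharp threshold $\gamma>2\kappa^{-1}$ enters; none of this is deep but it requires care.
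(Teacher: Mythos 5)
The paper itself gives no proof of Lemma \ref{lem-1}: it is quoted from Gy\"{o}ngy \cite{G98}, and your argument (Minkowski's integral inequality, then a Young-type bound in space with $1/p=\kappa$, then H\"{o}lder in time to get $\gamma>2\kappa^{-1}$, then kernel-difference estimates for continuity) is essentially the standard proof given there, so in outline you are on the right track. Two points of precision: the Dirichlet Green function is not a convolution kernel, so "Young's convolution inequality on $[0,1]$" should be replaced either by the generalized Young/Schur inequality for integral kernels (using that the bounds on $\int_0^1|H(s,t;x,y)|^p\,dy$ are uniform in $x$, and symmetrically in $y$) or by first dominating $G$ and $\partial_yG$ by Gaussian kernels in $x-y$ via (\ref{eq-21}) and extending $v$ by zero; this is minor.

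The genuine gap is the exponent-range issue that you flag but defer as "bookkeeping": it does not always work out with the inequalities as stated in the paper, and it already fails in the paper's own application of the lemma. For $H=\partial_yG$ you need $\|\partial_yG_{t-s}(x,\cdot)\|_{L^p}$ with $p=1/\kappa$, and (\ref{eqq-6}) is stated only for $r<\tfrac32$, i.e. $\kappa>\tfrac23$; the case $\rho=2$, $q=1$, $\kappa=\tfrac12$ used immediately after the lemma requires $p=2$, outside that range (similarly (\ref{eqq-5}) only covers $\kappa>\tfrac13$ for $H=G$). So as written your proof does not cover the full range of $\rho,q$ in the statement. The fix is easy but must be said: the restriction $r<\tfrac32$ (resp.\ $r<3$) in (\ref{eqq-6}) (resp.\ (\ref{eqq-5})) is only needed for time-integrated versions, while the pointwise-in-$t$ bounds $\int_0^1|\partial_yG_t(x,y)|^p\,dy\leq Ct^{\frac12-p}$ and $\int_0^1G_t^p(x,y)\,dy\leq Ct^{\frac{1-p}{2}}$ follow from (\ref{eq-21}) for every $p>0$, which gives $\|\partial_yG_{t-s}(x,\cdot)\|_{L^p}\leq C(t-s)^{\frac{\kappa}{2}-1}$ and $\|G_{t-s}(x,\cdot)\|_{L^p}\leq C(t-s)^{\frac{\kappa}{2}-\frac12}$ for all admissible $\kappa\in(0,1)$, after which your H\"{o}lder-in-time step and the continuity argument (where the same caveat applies to (\ref{eqq-5-1}) and (\ref{eq-29}), or where one can instead use density of smooth $v$ together with the uniform operator bound) go through as you describe.
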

In particular, taking $\rho=2, \kappa=\frac{1}{2}, q=1$, we deduce that
\begin{eqnarray}
\|J(v)(t,\cdot)\|_{L^{2}}\leq C\int^t_0(t-s)^{-\frac{3}{4}}\|v(s,\cdot)\|_{L^1}ds.
\end{eqnarray}
At last, we recall the following Garsia lemma from Lemma 10.2.1 in \cite{Xiong}, which plays a key role in this article.
\begin{lemma}\label{lem-3}
Let $(Z,d)$ be a metric space and let $\psi$ be a continuous map from $[0,T]$ to $Z$. Suppose that $\Psi$ and $p$ are increasing functions such that $\Psi(0)=p(0)=0$ and $\Psi$ is convex. Let
\[
\rho=\int^T_0\int^T_0\Psi\Big(\frac{d(\psi(t),\psi(s))}{p(|t-s|)}\Big)dtds
\]
Then, for any $t,s\in [0,T]$, we have
\begin{eqnarray*}
d(\psi(t), \psi(s))\leq 8\int^{|t-s|}_0 \Psi^{-1}(\rho r^{-2})dp(r),
\end{eqnarray*}
where $\Psi^{-1}$ denotes the inverse function of  $\Psi$.
\end{lemma}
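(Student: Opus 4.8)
The plan is to run the classical chaining argument of Garsia, Rodemich and Rumsey. Fix $s<t$ in $[0,T]$, put $d=t-s$, and introduce
\[
I(u)=\int_0^T\Psi\!\left(\frac{d(\psi(u),\psi(v))}{p(|u-v|)}\right)dv ,\qquad u\in[0,T],
\]
so that Tonelli's theorem gives $\int_0^T I(u)\,du=\rho$; in particular $I(u)<\infty$ for a.e.\ $u$. Both sides of the claimed inequality are continuous in $(s,t)$ --- the right-hand side only through $|t-s|$, and the inequality is vacuous when that integral is infinite --- so it is enough to establish it on the dense set of pairs with $I(s),I(t)<\infty$; I therefore assume $I(t)<\infty$ from now on.

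The heart of the argument is a recursive selection of a sequence $t=u_0>u_1>u_2>\cdots$ with $u_n\downarrow s$, along which the jumps $d(\psi(u_n),\psi(u_{n+1}))$ are controlled. Write $d_n=u_n-s$. Given $u_n$, I would seek $u_{n+1}$ in a sub-interval $(s+\gamma_n,\,s+\delta_n)\subset(s,u_n)$ whose endpoints are calibrated through $p$ (say $p(\gamma_n)=\tfrac14 p(d_n)$ and $p(\delta_n)=\tfrac12 p(d_n)$), so that the resulting $d_{n+1}$ satisfies both $p(d_{n+1})\le\tfrac12 p(d_n)$ and $d_{n+1}\le\tfrac12 d_n$; then $p(d_n)$ and $d_n$ both decay geometrically and $u_n\downarrow s$. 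Inside that sub-interval, of length $\ell_n$ comparable to $d_n$, I must avoid two bad sets: $\{u:I(u)>4\rho/\ell_n\}$, of Lebesgue measure at most $\ell_n/4$ because $\int_0^T I=\rho$; and $\{u:\Psi(d(\psi(u_n),\psi(u))/p(u_n-u))>4I(u_n)/\ell_n\}$, of measure at most $\ell_n/4$ because that integrand integrates (in $u$) to at most $I(u_n)$. Their union has measure $<\ell_n$, so a valid $u_{n+1}$ exists; it obeys $I(u_{n+1})\le4\rho/\ell_n$, and applying the increasing function $\Psi^{-1}$,
\[
d(\psi(u_n),\psi(u_{n+1}))\le p(u_n-u_{n+1})\,\Psi^{-1}\!\left(\frac{4I(u_n)}{\ell_n}\right).
\]
For $n\ge1$ the previous step gives $I(u_n)\le4\rho/\ell_{n-1}$, so, since $\ell_{n-1}\ell_n$ is comparable to $d_n^2$, the argument of $\Psi^{-1}$ is at most $C\rho/d_n^2$ for a universal $C$; only the step $n=0$ uses $I(t)<\infty$.

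It remains to sum the jumps. By continuity of $\psi$ and $u_n\downarrow s$,
\[
d(\psi(t),\psi(s))\le\sum_{n\ge0}d(\psi(u_n),\psi(u_{n+1}))\le d(\psi(u_0),\psi(u_1))+\sum_{n\ge1}p(u_n-u_{n+1})\,\Psi^{-1}\!\left(\frac{C\rho}{d_n^2}\right),
\]
and this series is compared to the Stieltjes integral $\int_0^{d}\Psi^{-1}(\rho r^{-2})\,dp(r)=\sum_{n\ge0}\int_{d_{n+1}}^{d_n}\Psi^{-1}(\rho r^{-2})\,dp(r)$. On $[d_{n+1},d_n]$ the non-increasing integrand is $\ge\Psi^{-1}(\rho d_n^{-2})$, while $p(u_n-u_{n+1})\le p(d_n)$ is comparable to $p(d_n)-p(d_{n+1})=\int_{d_{n+1}}^{d_n}dp(r)$ by the geometric decay of $p(d_n)$ built into the selection; the convexity of $\Psi$, i.e.\ concavity and subadditivity of $\Psi^{-1}$, lets one pull the constant $C$ out of $\Psi^{-1}$, and the leftover first jump is dominated by the same integral. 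Carrying the constants carefully through this chain yields the factor $8$, and undoing the density reduction of the first paragraph gives the inequality for all $s,t\in[0,T]$.

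The step I expect to be the genuine obstacle is the calibration of the selection sub-intervals together with the constant bookkeeping in the last paragraph: the target estimate couples a length scale (inside $\Psi^{-1}(\rho r^{-2})$) with a $p$-scale (the integrator $dp(r)$), so the intervals from which $u_{n+1}$ is drawn must be designed to make lengths and $p$-values contract geometrically at once, and the discrete chain must then be matched to the continuous integral with the sharp constant $8$. The Chebyshev estimates feeding the selection, the telescoping via continuity of $\psi$, and the density reduction disposing of the endpoint where $I$ is infinite are all routine by comparison.
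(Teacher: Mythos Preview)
The paper does not actually prove this lemma: it is stated verbatim as ``the following Garsia lemma from Lemma 10.2.1 in \cite{Xiong}'' and used as a black box, with no argument supplied. Your proposal is the standard Garsia--Rodemich--Rumsey chaining proof (the same one that appears in Kallianpur--Xiong and in the original GRR paper), so in that sense you are giving exactly the proof the cited reference contains; there is nothing to compare against in the present paper itself.

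Your sketch is structurally sound --- the Chebyshev selection of $u_{n+1}$ inside a sub-interval avoiding two bad sets, the geometric decay of both $d_n$ and $p(d_n)$, the telescoping sum bounded by the Stieltjes integral, and the density reduction via continuity are all the right ingredients. The one place to be careful, which you correctly flag, is the simultaneous calibration of the length scale and the $p$-scale: in the usual write-up one takes $d_{n+1}$ so that $p(d_{n+1})\le\tfrac12 p(d_n)$ and then uses $u_n-u_{n+1}\le d_n$ together with $p(d_n)-p(d_{n+1})\ge\tfrac12 p(d_n)\ge\tfrac12 p(u_n-u_{n+1})$ to match the discrete sum to $\int dp$; the factor $8$ then drops out of the two Chebyshev constants $4$ and the factor $2$ from this comparison. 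Nothing in your outline is wrong, but if you intend to include a full proof you should write out that step explicitly rather than leave it at ``carrying the constants carefully''.
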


\section{CLT for semilinear SPDE}\label{sec-2}
Let $u^{\varepsilon}(t,x)$ be the solution of the following equation
\begin{eqnarray}\notag
u^{\varepsilon}(t,x)&=&\int^1_0 G_t(x,y)f(y)dy +\int^t_0\int^1_0 G_{t-s}(x,y)b(s,y,u^{\varepsilon}(s,y))dyds\\
\notag
&&\  -\int^t_0\int^1_0 \partial_y G_{t-s}(x,y)g(s,y,u^{\varepsilon}(s,y))dyds\\
\label{eqq-9}
&&\ +\sqrt{\varepsilon}\int^t_0\int^1_0G_{t-s}(x,y)\sigma(s,y,u^{\varepsilon}(s,y))W(dyds).
\end{eqnarray}
Using the same method as Theorem 2.1 in \cite{G98}, we know that $\sup_{t\in [0,T]}\|u^{\varepsilon}(t)\|^2_H$ is bounded in probability, i.e.,
\begin{eqnarray}\label{eqq-13}
\lim_{C\rightarrow \infty}\sup_{0<\varepsilon\leq 1}P\Big(\sup_{t\in [0,T]}\|u^{\varepsilon}(t)\|^2_H>C\Big)=0.
\end{eqnarray}
Taking $\varepsilon\rightarrow 0$, it yields that
\begin{eqnarray}\notag
u^{0}(t,x)&=&\int^1_0 G_t(x,y)f(y)dy +\int^t_0\int^1_0 G_{t-s}(x,y)b(s,y,u^{0}(s,y))dyds\\
\label{eqq-10}
&&\  -\int^t_0\int^1_0 \partial_y G_{t-s}(x,y)g(s,y,u^{0}(s,y))dyds.
\end{eqnarray}
Define $Y^{\varepsilon}(t,x)=\frac{u^{\varepsilon}(t,x)-u^{0}(t,x)}{\sqrt{\varepsilon}}$, then $Y^{\varepsilon}$ satisfies
\begin{eqnarray}\notag
Y^{\varepsilon}(t,x)&=&\frac{1}{\sqrt{\varepsilon}}\int^t_0\int^1_0 G_{t-s}(x,y)\Big(b(s,y,u^{\varepsilon}(s,y))-b(s,y,u^{0}(s,y))\Big)dyds\\
\notag
&&\  -\frac{1}{\sqrt{\varepsilon}}\int^t_0\int^1_0 \partial_y G_{t-s}(x,y)\Big(g(s,y,u^{\varepsilon}(s,y))-g(s,y,u^{0}(s,y))\Big)dyds\\
\label{eqq-11}
&&\ +\int^t_0\int^1_0G_{t-s}(x,y)\sigma(s,y,u^{\varepsilon}(s,y))W(dyds).
\end{eqnarray}
Let $Y$ is the solution of the following equation
\begin{eqnarray}\notag
Y(t,x)&=&\int^t_0\int^1_0G_{t-s}(x,y)\partial_rb(s, y, u^0(s,y))Y(s,y)dsdy\\ \notag
&&\ -\int^t_0\int^1_0\partial_yG_{t-s}(x,y)\partial_rg(s, y, u^0(s,y))Y(s,y)dsdy\\
\label{eqq-12}
&&\ +\int^t_0\int^1_0G_{t-s}(x,y)\sigma(s,y, u^0(s,y))W(dsdy).
\end{eqnarray}

The first result of this article reads as
\begin{thm}\label{thm-2}
(Central Limit Theorem) Let the initial value $f\in L^p([0,1])$ for all $p\in [2,\infty)$. Under (H1)-(H3), $Y^{\varepsilon}(t)-Y\rightarrow 0$ in probability in $C([0,T]; H)$, i.e., for any $\delta>0$,
\begin{eqnarray*}
\lim_{\varepsilon\rightarrow 0}P\left(\sup_{t\in [0,T]}\|Y^{\varepsilon}(t)-Y(t)\|_H>\delta\right)=0.
\end{eqnarray*}
\end{thm}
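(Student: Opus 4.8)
The natural strategy is to set $Z^{\varepsilon}(t,x)=Y^{\varepsilon}(t,x)-Y(t,x)$ and derive the integral equation it satisfies, exactly as displayed in the introduction: subtracting (\ref{eqq-12}) from (\ref{eqq-11}) and adding and subtracting the linearized terms $\partial_r b(u^0)Y$ and $\partial_r g(u^0)Y$, one gets $Z^{\varepsilon}=I^{\varepsilon}_1+I^{\varepsilon}_2+I^{\varepsilon}_3$, where $I^{\varepsilon}_1$ collects the drift difference, $I^{\varepsilon}_2$ the $\partial_y G$-term, and $I^{\varepsilon}_3=\int_0^t\int_0^1 G_{t-s}(x,y)\big(\sigma(s,y,u^{\varepsilon})-\sigma(s,y,u^0)\big)W(dyds)$ the stochastic term. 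Note $|\sigma(u^\varepsilon)-\sigma(u^0)|\le L|u^\varepsilon-u^0|=L\sqrt{\varepsilon}|Y^\varepsilon|$, so $I^\varepsilon_3$ carries a factor $\sqrt{\varepsilon}$ and should vanish; the first two terms split further into a ``linearization error'' piece controlled by the local Lipschitz / $C^1$ bounds in (H3) (Taylor remainder of the form $\tfrac12\partial_r^2 g\cdot(u^\varepsilon-u^0)^2/\sqrt{\varepsilon}=\tfrac12\sqrt{\varepsilon}\,\partial_r^2g\cdot(Y^\varepsilon)^2$, again with a $\sqrt{\varepsilon}$ gain after using that $\|Y^\varepsilon(t)\|_H$ is bounded in probability) plus a ``propagation'' piece of the form $\int G_{t-s}\,\partial_r b(u^0)\,Z^\varepsilon$ and $\int \partial_y G_{t-s}\,\partial_r g(u^0)\,Z^\varepsilon$, on which one closes a Gronwall-type estimate using Lemma \ref{lem-1}.

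The plan is to prove $E\big[\sup_{t\in[0,T]}\|Z^\varepsilon(t)\|_H^2 \wedge 1\big]\to 0$, or more practically to work on the event $\Omega_{C,\varepsilon}=\{\sup_t\|u^\varepsilon(t)\|_H^2\le C,\ \sup_t\|u^0(t)\|_H^2\le C\}$ whose complement has probability $\to 0$ uniformly in $\varepsilon$ as $C\to\infty$ by (\ref{eqq-13}), so all the ``linear growth'' constants from (H3) become genuine constants. First I would establish, as a preliminary, moment bounds $\sup_{0<\varepsilon\le1}E\sup_{t\in[0,T]}\|Y^\varepsilon(t)\|_H^{2p}<\infty$ (or at least boundedness in probability) and likewise for $Y$; combined with the $C^1$-estimates this upgrades the Taylor-remainder terms to $O(\sqrt{\varepsilon})$ in the appropriate norm. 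Then for $I^\varepsilon_1, I^\varepsilon_2$ I would apply Lemma \ref{lem-1} with $\rho=2$, $q=1$ (or $q$ slightly above $1$ for the $\partial_y G$ kernel, using (\ref{eqq-6})) to get $\|I^\varepsilon_i(t)\|_H\le C\int_0^t(t-s)^{-\alpha}\big(\sqrt{\varepsilon}\,R^\varepsilon(s)+\|Z^\varepsilon(s)\|_H\big)ds$ with $\alpha<1$, where $R^\varepsilon(s)$ is the (probabilistically bounded) remainder factor; squaring, using the singular-kernel Gronwall lemma, and taking the supremum over $t$ handles these two terms once $I^\varepsilon_3$ is controlled. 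The point is that the $(t-s)^{-\alpha}$ kernels with $\alpha<1$ are integrable, so no BDG in the $x$-variable is needed for $I_1,I_2$.

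The main obstacle — flagged by the authors themselves — is $E\sup_{t\in[0,T]}\|I^\varepsilon_3(t)\|_H^2$: since $G_{t-s}(x,y)$ depends on $t$ and is not a semimartingale in $x$, Burkholder–Davis–Gundy cannot be applied directly to pull the $\sup_t$ inside. The remedy is the Garsia lemma (Lemma \ref{lem-3}) applied to the $H$-valued path $t\mapsto I^\varepsilon_3(t)$: with $\Psi(u)=u^{2m}$ and $p(r)=r^\theta$ for suitable $m$ large and $\theta$, Lemma \ref{lem-3} bounds $\sup_{t}\|I^\varepsilon_3(t)\|_H$ by a constant times $\big(\int_0^T\int_0^T \|I^\varepsilon_3(t)-I^\varepsilon_3(s)\|_H^{2m}/|t-s|^{2m\theta}\,dt\,ds\big)^{1/2m}$ plus a term at $t=0$. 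The increment $I^\varepsilon_3(t)-I^\varepsilon_3(s)$ is itself a stochastic integral (against $W(dyds)$) for each \emph{fixed} pair $(t,s)$, so now BDG \emph{is} available: one computes $E\|I^\varepsilon_3(t)-I^\varepsilon_3(s)\|_H^{2m}\le C\,\varepsilon^m\,E\big(\int\!\int|G_{t-u}(x,y)-G_{s-u}(x,y)|^2|Y^\varepsilon(u,y)|^2\,\cdots\big)^m$, and the kernel increment is controlled by the Green-function estimates (\ref{eqq-5-1}) with $r=2$, giving a factor $|t-s|^{1/2}$ (up to $\varepsilon$-powers), which after choosing $\theta<1/4$ and $m$ large enough makes the double integral finite and the whole bound $O(\varepsilon^{c})$ for some $c>0$. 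Once $E\sup_t\|I^\varepsilon_3(t)\|_H^2=O(\varepsilon^c)\to 0$ is in hand, it feeds into the Gronwall estimate for $I^\varepsilon_1+I^\varepsilon_2$, yielding $E\big[\sup_t\|Z^\varepsilon(t)\|_H^2\wedge 1\big]\to 0$ and hence the claimed convergence in probability in $C([0,T];H)$. The delicate points are the a priori $L^p$-moment bounds needed to control the Taylor remainders and to apply BDG (these are the ``delicate a priori estimates'' of Section \ref{sec-4}), and the bookkeeping of exponents $(m,\theta,\alpha,r)$ so that every integral converges and every power of $\varepsilon$ is strictly positive.
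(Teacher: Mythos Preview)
Your overall strategy is sound and close to the paper's, but the way you propose to handle $I^{\varepsilon}_1$ and $I^{\varepsilon}_2$ is genuinely different from what the authors do, so a brief comparison is in order.

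The paper does \emph{not} close a singular Gronwall inequality on $\|Z^{\varepsilon}(t)\|_H$ using Lemma~\ref{lem-1}. Instead, after localizing with the stopping time $\tau^{\varepsilon,R}$, the authors first establish the pointwise-in-time $L^p$ bound $\sup_{t}E\|Z^{\varepsilon}(t\wedge\tau)\|_{L^p}^p\le \varepsilon^{p/2}C$ for $p>14$ (Lemma~\ref{lem-4}), and then use this, together with Corollary~\ref{cor-1}, to prove a time-H\"older estimate of the form $E\|\bar I^{\varepsilon}_2(t_1)-\bar I^{\varepsilon}_2(t_2)\|_H^p\le \varepsilon^{p/2}C\,|t_1-t_2|^{(p-6)/4}$; the Garsia lemma is then applied to $\bar I^{\varepsilon}_2$ (and likewise to $\bar I^{\varepsilon}_1$), not only to $I^{\varepsilon}_3$. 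Thus in the paper the Garsia machinery is used for all three pieces, and Lemma~\ref{lem-4} is an essential intermediate step feeding into the H\"older bounds for $I^{\varepsilon}_1,I^{\varepsilon}_2$.

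Your route --- Lemma~\ref{lem-1} plus an iterated singular Gronwall for $I^{\varepsilon}_1,I^{\varepsilon}_2$, Garsia only for $I^{\varepsilon}_3$ --- does work and is arguably shorter, since it bypasses Lemma~\ref{lem-4} and the several pages of H\"older-in-time computations for $\bar I^{\varepsilon}_{2,1},\dots,\bar I^{\varepsilon}_{2,4}$. Two points to watch, though. First, to feed the Gronwall you need $\sup_{s\le\tau}\|Y^{\varepsilon}(s)\|_H^2$ bounded in probability; this is \emph{not} a free consequence of (\ref{eqq-13}) and is essentially what Lemma~\ref{lem-2} delivers (its proof already combines Garsia for the stochastic part with a Gronwall in $L^p$, and the resulting bound (\ref{eq-33}) holds uniformly in $t$ before taking expectation). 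Second, to extract the factor $\varepsilon^{p/2}$ in the Garsia estimate for $I^{\varepsilon}_3$ you must use the Lipschitz bound $|\sigma(u^{\varepsilon})-\sigma(u^0)|\le L|u^{\varepsilon}-u^0|$ and then separate the kernel increment from $|u^{\varepsilon}-u^0|^{2}$ via H\"older; this forces the same exponent gymnastics as in the paper (effectively $p>14$), so your ``$m$ large enough'' has to be made precise. With these two inputs in place, your argument goes through.
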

\subsection{A priori estimates}\label{sec-4}

In order to establish CLT and MDP for semilinear  SPDE (\ref{e-1}), we need to make some delicate a priori estimates.
Let us start with $u^0$.
\begin{lemma}\label{lem-5}
Under (H1), there exists $C_0:=C(K,T)(1+\|f\|^2_H)$ such that
\begin{eqnarray*}
\sup_{t\in [0,T]}\|u^0(t)\|^2_H\leq C_0.
\end{eqnarray*}
\end{lemma}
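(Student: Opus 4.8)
The plan is to work directly from the mild formulation \eqref{eqq-10} for $u^0$ and close a Gronwall-type estimate on $t\mapsto\|u^0(t)\|_H^2$. First I would split $u^0(t,x)$ into the three terms appearing in \eqref{eqq-10}: the contribution of the initial data $S_tf(x):=\int_0^1 G_t(x,y)f(y)\,dy$, the drift term involving $b$, and the term involving $\partial_y G_{t-s}$ and $g$. For the first term, the contraction property of the heat semigroup on $H$ (equivalently the properties of $G$ in Section~2.2) gives $\|S_tf\|_H\le\|f\|_H$, so that piece is bounded by $\|f\|_H$ uniformly in $t\in[0,T]$.

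For the drift term, I would apply Lemma~\ref{lem-1} with $\rho=2$, $q=1$ (so $\kappa=\tfrac12$) to the operator $J$ with kernel $G_{t-s}(x,y)$, yielding
\[
\Big\|\int_0^t\!\int_0^1 G_{t-s}(x,y)\,b(s,y,u^0(s,y))\,dy\,ds\Big\|_H
\le C\int_0^t (t-s)^{-3/4}\|b(s,\cdot,u^0(s,\cdot))\|_{L^1}\,ds.
\]
Using the linear growth bound $|b(t,x,r)|\le K(1+|r|)$ from (H1) and $\|1+|u^0(s,\cdot)|\|_{L^1([0,1])}\le 1+\|u^0(s,\cdot)\|_{L^1}\le 1+\|u^0(s)\|_H$ (Cauchy--Schwarz on $[0,1]$), this is bounded by $C\int_0^t(t-s)^{-3/4}(1+\|u^0(s)\|_H)\,ds$. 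The critical point here is that the exponent $-3/4$ is integrable on $[0,T]$, so after squaring and applying Hölder/Jensen with respect to the finite measure $(t-s)^{-3/4}ds$ one gets a term $\le C(T)\int_0^t(t-s)^{-3/4}(1+\|u^0(s)\|_H^2)\,ds$.

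The term involving $g$ is the genuine obstacle, because $g$ only has quadratic growth, $|g(t,x,r)|\le K(1+|r|^2)$, so naively $\|g(s,\cdot,u^0(s,\cdot))\|_{L^1}\le K(1+\|u^0(s)\|_H^2)$, and feeding $\|u^0(s)\|_H^2$ (rather than $\|u^0(s)\|_H$) into a singular-kernel integral does not close a linear Gronwall inequality. I expect the paper to handle this exactly as in Gy\"ongy \cite{G98}: instead of using Lemma~\ref{lem-1} with $q=1$, one first obtains an a priori bound on $u^0$ in $L^p$ for suitable $p>2$ (or in some space-time Lebesgue norm) via a separate bootstrap, and then estimates the $g$-term using Lemma~\ref{lem-1} with $\rho=2$ and a larger exponent $q$ matched to the $\partial_y G$ kernel (whose integrability is governed by \eqref{eqq-6}, valid for $0<r<3/2$), so that $\|g(s,\cdot,u^0(s,\cdot))\|_{L^q}\lesssim 1+\|u^0(s,\cdot)\|_{L^{2q}}^2$ can be controlled by the already-established $L^{2q}$ bound. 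The whole scheme is circular unless the $L^p$ estimates for $p>2$ are done first; in the write-up I would cite Theorem~2.1 / Proposition~3.5 of \cite{G98} for those, then combine the three pieces to obtain
\[
\|u^0(t)\|_H^2\le C(K,T)\big(1+\|f\|_H^2\big)+C(K,T)\int_0^t (t-s)^{-3/4}\,\|u^0(s)\|_H^2\,ds,
\]
and finish with the singular Gronwall lemma (the Gronwall inequality with weakly singular kernel), which gives $\sup_{t\in[0,T]}\|u^0(t)\|_H^2\le C(K,T)(1+\|f\|_H^2)=:C_0$. The main technical care is making sure the quadratic term from $g$ is absorbed using an independently derived higher-integrability bound rather than attempting to close everything in $H$ alone.
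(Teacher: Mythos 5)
There is a genuine gap: the quadratic term coming from $g$ is never actually handled, and the device you propose for it would not work. The paper does not estimate this term through the mild formulation at all; it passes to the differential form of \eqref{eqq-10}, performs a chain-rule (energy) estimate on $\|u^0(t)\|_H^2$, integrates by parts in the term $2\int_0^t(u^0(s),\partial_x g(s,u^0(s)))\,ds$, and observes that with the primitive $h(t,r)=\int_0^r g(t,z)\,dz$ one has $g(s,u^0)\,\partial_x u^0=\partial_x h(s,u^0)$, so the spatial integral vanishes identically thanks to the Dirichlet boundary conditions. This cancellation, coming from the conservative form $\partial_x g(\cdot,\cdot,u)$ of the nonlinearity, is exactly why the quadratic growth of $g$ is harmless at the $H$ level and why the bound closes with the constant $C_0=C(K,T)(1+\|f\|_H^2)$ depending only on $\|f\|_H$; the remaining $b$-term is linear and is absorbed by the ordinary Gronwall lemma, with no singular kernel needed.

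In your scheme the $g$-term is instead deferred to an a priori $L^p$ bound with $p>2$, which you would cite from \cite{G98}; as you yourself note, this makes the argument circular, and the cited results do not supply such a bound in the required form: Theorem 2.1 and Proposition 3.5 of \cite{G98} give existence, uniqueness and the mild/weak equivalence in $C([0,T];L^2([0,1]))$ (and, for the stochastic equation, boundedness in probability as in \eqref{eqq-13}), not a quantitative $\sup_{t\le T}\|u^0(t)\|_{L^{2q}}$ estimate controlled by $\|f\|_H$. Even if such a higher-integrability bound were established separately, it would involve $\|f\|_{L^{2q}}$ and therefore could not yield the constant $C(K,T)(1+\|f\|_H^2)$ asserted in the lemma. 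The first parts of your plan (contractivity of the heat semigroup for the initial term, Lemma \ref{lem-1} with $\rho=2$, $q=1$ and a singular Gronwall step for the $b$-term) are correct but become unnecessary once one argues in differential form; the essential missing idea is the integration by parts exploiting the divergence structure of the $g$-term together with the zero boundary values.
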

\begin{proof}
For any $t\in [0,T]$, from (\ref{eqq-10}), we get
\begin{eqnarray*}
\frac{\partial u^0(t,x)}{\partial t}=\frac{\partial^2 u^0(t,x)}{\partial x^2}+b(t,x,u^0(t,x))+\partial_x g(t,x,u^0(t,x)).
\end{eqnarray*}
Utilizing the chain rule, it follows that
\begin{eqnarray*}
&&\|u^0(t)\|^2_H+2\int^t_0\|\partial_xu^0(s)\|^2_Hds\\
&=&\|f\|^2_H+2\int^t_0(u^0(s), b(s,u^0(s)))ds+2\int^t_0(u^0(s), \partial_x g(s,u^0(s)))ds\\
&=:& \|f\|^2_H+I_1(t)+I_2(t),
\end{eqnarray*}
By (H1), we have
\begin{eqnarray*}
I_1(t)\leq K\int^t_0\|u^0\|_H(1+\|u^0\|_H)ds\leq CKT+CK\int^t_0\|u^0(s)\|^2_Hds.
\end{eqnarray*}
By integration by parts, we have
\begin{eqnarray*}
I_2(t)=-2\int^t_0(\partial_x u^0(s), g(s,u^0(s)))ds.
\end{eqnarray*}
Let $h(t,r)=\int^r_0 g(t,z)dz, t\in [0,T], r\in \mathbb{R}$, by the boundary conditions, it follows that
 \begin{eqnarray*}
-2\int^t_0(\partial_x u^0(s), g(s,u^0(s)))ds=-2\int^t_0\int^1_0\frac{\partial}{\partial_x}h(s,u^0(s,x))dxds=0.
\end{eqnarray*}
Combining all the above estimates, we obtain
\begin{eqnarray*}
\|u^0(t)\|^2_H+\int^t_0\|\partial_xu^0(s)\|^2_Hds\leq  \|f\|^2_H+CKT+CK\int^t_0\|u^0(s)\|^2_Hds.
\end{eqnarray*}
By Gronwall inequality, we obtain the desired result.
\end{proof}
%

For any $0<\varepsilon\leq 1$ and $R>0$, define a stopping time
\begin{eqnarray}\label{eee-30}
\tau^{\varepsilon,R}:=\inf\{t\wedge T:\|u^{\varepsilon}(t)\|_H>R\}.
\end{eqnarray}
For simplicity, in the rest part, we denote that $\tau:=\tau^{\varepsilon,R}$.

Now, we make estimates of the difference between $u^{\varepsilon}$ and $u^0$, which is crucial to our proof of CLT for semilinear SPDE (\ref{e-1}).
\begin{lemma}\label{lem-2}
For any $R>0, p>8$, there exists $C_1=C(R,K,L,p,T, C_0)$ such that
\begin{eqnarray}\label{eqq-1}
\sup_{t\in [0,T]}E\int^1_0|u^{\varepsilon}(t\wedge\tau,x)-u^0(t\wedge\tau,x)|^pdx\leq\varepsilon^{\frac{p}{2}}C_1.
\end{eqnarray}
\end{lemma}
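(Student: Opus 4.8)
The plan is to subtract the mild formulations \eqref{eqq-9} and \eqref{eqq-10}, write $v^\varepsilon := u^\varepsilon - u^0$, and estimate $E\|v^\varepsilon(t\wedge\tau,\cdot)\|_{L^p}^p$ by a Gronwall-type argument. We have
\begin{eqnarray*}
v^\varepsilon(t,x) &=& \int_0^t\!\!\int_0^1 G_{t-s}(x,y)\big(b(s,y,u^\varepsilon)-b(s,y,u^0)\big)\,dyds\\
&&\ -\int_0^t\!\!\int_0^1 \partial_y G_{t-s}(x,y)\big(g(s,y,u^\varepsilon)-g(s,y,u^0)\big)\,dyds\\
&&\ +\sqrt{\varepsilon}\int_0^t\!\!\int_0^1 G_{t-s}(x,y)\sigma(s,y,u^\varepsilon(s,y))\,W(dyds)\\
&=:& A_1(t,x)+A_2(t,x)+A_3(t,x).
\end{eqnarray*}
Up to the stopping time $\tau$, both $\|u^\varepsilon(s)\|_H\le R$ and, by Lemma \ref{lem-5}, $\|u^0(s)\|_H\le C_0^{1/2}$. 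I would first establish an $L^p$-in-space, $L^\infty$-in-time a priori bound $\sup_{t\le T}E\|u^\varepsilon(t\wedge\tau)\|_{L^p}^p\le C(R,p,T)$ (this follows by the same mild-solution iteration as Theorem 2.1 in \cite{G98}, using (H1), Lemma \ref{lem-1}, and the factorization/Garsia technique for the stochastic term); this is needed because the local Lipschitz constants in (H2) grow linearly in $|r|$, so controlling the $L^p$ norm of the drift differences requires an $L^p$ moment, not just the $L^2$ one.

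For the deterministic terms I would use (H2): $|b(s,y,u^\varepsilon)-b(s,y,u^0)|\le L(1+|u^\varepsilon|+|u^0|)|v^\varepsilon|$, and similarly for $g$. For $A_1$, by Lemma \ref{lem-1} with $\rho=p$, $q=p$ (so $\kappa=1$),
\[
\|A_1(t,\cdot)\|_{L^p}\le C\int_0^t \|(1+|u^\varepsilon(s)|+|u^0(s)|)v^\varepsilon(s,\cdot)\|_{L^p}\,ds,
\]
and by H\"older with a pair of exponents (one large enough for the growth factor, controlled by the $L^p$-moment bound just proved), raising to the $p$-th power and using Jensen,
\[
E\|A_1(t\wedge\tau,\cdot)\|_{L^p}^p\le C\int_0^t E\|v^\varepsilon(s\wedge\tau,\cdot)\|_{L^p}^p\,ds + (\text{similar, absorbable terms}).
\]
The $A_2$ term carries $\partial_y G_{t-s}$; here I would invoke Lemma \ref{lem-1} with $H(s,t;x,y)=\partial_yG_{t-s}(x,y)$, choosing $q$ slightly below $p$ so that $\kappa>0$ and the time exponent $(t-s)^{\kappa/2-1}$ is integrable, together with \eqref{eqq-6}; the same H\"older-and-power bookkeeping then yields $E\|A_2(t\wedge\tau,\cdot)\|_{L^p}^p\le C\int_0^t E\|v^\varepsilon(s\wedge\tau,\cdot)\|_{L^p}^p\,ds$.

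The stochastic term $A_3$ is the main obstacle, for precisely the reason flagged in the introduction: the integrand $G_{t-s}(x,y)$ depends on the outer time $t$, so one cannot apply Burkholder–Davis–Gundy in $t$ directly. I would estimate $A_3$ pointwise in $t$ (not in supremum over $t$): for fixed $t$, $A_3(t,x)$ is a genuine stochastic integral in $(s,y)$, so BDG in the Walsh sense gives, using $|\sigma|\le K$ and property \eqref{eqq-5} (with $r=2$) of the Green function,
\[
E|A_3(t,x)|^p\le C_p\,\varepsilon^{p/2}\,E\Big(\int_0^{t}\!\!\int_0^1 G_{t-s}^2(x,y)\,K^2\,dyds\Big)^{p/2}\le C_p\,\varepsilon^{p/2}\Big(\int_0^t (t-s)^{-1/2}ds\Big)^{p/2}\le C\,\varepsilon^{p/2},
\]
uniformly in $x\in[0,1]$ and $t\in[0,T]$; integrating in $x$ gives $E\|A_3(t\wedge\tau,\cdot)\|_{L^p}^p\le C\varepsilon^{p/2}$. (The hypothesis $p>8$ enters to keep the Green-function exponents in the admissible ranges $0<r<3$ of \eqref{eqq-5} and $0<r<3/2$ of \eqref{eqq-6} after the H\"older splitting in $A_1,A_2$.) Assembling the three bounds, the function $\phi(t):=\sup_{r\le t}E\|v^\varepsilon(r\wedge\tau,\cdot)\|_{L^p}^p$ satisfies $\phi(t)\le C\varepsilon^{p/2}+C\int_0^t\phi(s)\,ds$, and Gronwall's inequality yields $\phi(T)\le \varepsilon^{p/2}C_1$ with $C_1=C(R,K,L,p,T,C_0)$, which is \eqref{eqq-1}. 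Note that since $\tau$ is a stopping time and the bound is at the fixed deterministic time $t\wedge\tau$, no supremum over $t$ inside the expectation is needed here — that refinement is deferred to the later Garsia-lemma arguments of Section \ref{sec-4}.
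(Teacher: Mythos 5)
Your decomposition is the same as the paper's, but there is a genuine gap in the treatment of the stochastic term, and it is exactly the difficulty this lemma is designed to overcome. The quantity to be bounded is evaluated at the \emph{random} time $t\wedge\tau$, and
$A_3(t\wedge\tau,x)=\sqrt{\varepsilon}\int_0^{t\wedge\tau}\int_0^1 G_{t\wedge\tau-s}(x,y)\sigma\,W(dyds)$
is not a martingale stopped at $\tau$, because the integrand contains the Green kernel evaluated at the (random) outer time. Your Burkholder--Davis--Gundy bound $E|A_3(t,x)|^p\le C\varepsilon^{p/2}$ is valid only for fixed deterministic $t$, and your closing remark that ``no supremum over $t$ inside the expectation is needed here'' is precisely where the argument breaks: you cannot substitute $t\wedge\tau$ into a fixed-time moment bound. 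The paper's proof handles this by setting $J(t,x)=\int_0^t\int_0^1G_{t-s}(x,y)\sigma I_{\{s\le\tau\}}W(dyds)$, proving the increment estimate $E|J(t,x)-J(s,x)|^p\le K^p|t-s|^{p/4}$ (this is where $p>8$ is really used), and then invoking the Garsia lemma (Lemma \ref{lem-3}) to get a pathwise bound $\sup_{t\le T}|J(t,x)|\le C\rho^{1/p}(x)$ with $E\rho(x)\le K^pT^2$, which \emph{can} be evaluated at $t\wedge\tau$ and feeds into a pathwise Gronwall argument. So Garsia is not ``deferred to later''; it is indispensable already in this lemma.

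There is also a problem with your drift estimates. Pairing the growth factor $(1+|u^\varepsilon|+|u^0|)$ against $v^\varepsilon$ by H\"older in the $L^p$ norm forces $v^\varepsilon$ into $L^b$ with $b>p$, so the Gronwall loop in $E\|v^\varepsilon\|_{L^p}^p$ does not close; this is why you felt compelled to assert an auxiliary bound $\sup_t E\|u^\varepsilon(t\wedge\tau)\|_{L^p}^p\le C$, which you do not prove and which would in any case make $C_1$ depend on $\|f\|_{L^p}$ rather than only on $C_0$ as in the statement. The paper avoids this entirely: by Cauchy--Schwarz in $y$ it pairs the growth factor with plain $L^2_y$ (bounded by $R$ from the stopping time and $C_0$ from Lemma \ref{lem-5}) and pairs $G_{t-s}^2$ (resp.\ $|\partial_yG_{t-s}|^{2(1-\delta)}$) with $|v^\varepsilon|^2$, then applies H\"older in $(s,y)$ with exponents $p/2$ and $q$, $\tfrac2p+\tfrac1q=1$, using (\ref{eqq-5})--(\ref{eqq-6}); this yields $|K_i^\varepsilon(t,x)|^p\le C\int_0^{t\wedge\tau}\int_0^1|v^\varepsilon(s\wedge\tau,y)|^p\,dyds$ pathwise, closing the Gronwall argument with no $L^p$ moments of $u^\varepsilon$ at all. (Minor: your appeal to Lemma \ref{lem-1} with $q=\rho=p$ is outside its stated range $q\in[1,\rho)$, though Young's inequality covers that case for the kernel $G$.)
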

\begin{proof}
We deduce from (\ref{eqq-9}) and (\ref{eqq-10}) that
\begin{eqnarray*}
u^{\varepsilon}(t\wedge\tau,x)-u^0(t\wedge\tau,x)&=&\int^{t}_0\int^1_0G_{t\wedge \tau-s}(x,y)(b(u^{\varepsilon})-b(u^0))I_{\{s\leq \tau\}}dsdy\\
&&\ -\int^t_0\int^1_0\partial_yG_{t\wedge\tau-s}(x,y)(g(u^{\varepsilon}(s))-g(u^0(s)))I_{\{s\leq \tau\}}dsdy\\
&&\ +\sqrt{\varepsilon}\int^t_0\int^1_0G_{t\wedge\tau-s}(x,y)\sigma(s,y,u^{\varepsilon}(s,y))I_{\{s\leq \tau\}}W(dyds)\\
&:=& K^{\varepsilon}_1(t,x)+K^{\varepsilon}_2(t,x)+K^{\varepsilon}_3(t,x).
\end{eqnarray*}
By (H2) and H\"{o}lder inequality, we deduce that
\begin{eqnarray*}
|K^{\varepsilon}_1(t,x)|^p
&\leq & L^p\Big|\int^{t}_0\int^1_0G_{t\wedge\tau-s}(x,y)(1+|u^{\varepsilon}|+|u^0|)|u^{\varepsilon}-u^0|I_{\{s\leq \tau\}}dsdy\Big|^p\\
&\leq & L^p\Big|\int^{t}_0\Big[\int^1_0(1+|u^{\varepsilon}(s\wedge \tau)|^2+|u^0(s\wedge \tau)|^2)I_{\{s\leq \tau\}}dy\Big]^{\frac{1}{2}}\Big[\int^1_0G^2_{t\wedge \tau-s}(x,y)|u^{\varepsilon}-u^0|^2I_{\{s\leq \tau\}}dy\Big]^{\frac{1}{2}}ds\Big|^p\\
&\leq& L^p(1+R^2+C_0)^{\frac{p}{2}}\Big|\int^{t}_0\Big[\int^1_0G^2_{t\wedge \tau-s}(x,y)|u^{\varepsilon}(s)-u^0(s)|^2dy\Big]^{\frac{1}{2}}I_{\{s\leq \tau\}}ds|^p\\
&\leq& L^p(1+R^2+C_0)^{\frac{p}{2}} t^{\frac{p}{2}}\Big|\int^{t}_0\int^1_0G^2_{t\wedge \tau-s}(x,y)|u^{\varepsilon}(s\wedge  \tau)-u^0(s\wedge  \tau)|^2dyI_{\{s\leq \tau\}}ds\Big|^{\frac{p}{2}}\\
&\leq& L^p(1+R^2+C_0)^{\frac{p}{2}} t^{\frac{p}{2}}\Big|\Big(\int^{t\wedge \tau}_0\int^1_0G^{2 q}_{t\wedge \tau-s}(x,y)dyds\Big)^{\frac{p}{2q}}\\
&& \times \Big(\int^{t\wedge \tau}_0\int^1_0|u^{\varepsilon}(s\wedge  \tau)-u^0(s\wedge  \tau)|^pdyds\Big)\Big|,
\end{eqnarray*}
where $\frac{2}{p}+\frac{1}{q}=1$.

As $p>8$, we have $q=(1-2p^{-1})^{-1}<\frac{4}{3}<\frac32$, then
$
2 q<3.
$
It follows from (\ref{eqq-5}) that
\begin{eqnarray*}
|K^{\varepsilon}_1(t,x)|^p
&\leq& L^p(1+R^2+C_0)^{\frac{p}{2}}C(p,T)\int^{t\wedge \tau}_0\int^1_0|u^{\varepsilon}(s\wedge \tau,y)-u^0(s\wedge \tau,y)|^pdyds.
\end{eqnarray*}
By (\ref{eqq-4}) and H\"{o}lder inequality, for any $0<\delta<1$, we get
\begin{eqnarray*}
|K^{\varepsilon}_2(t,x)|^p&=& \Big|\int^t_0\int^1_0\partial_yG_{t\wedge \tau-s}(x,y)(g(u^{\varepsilon})-g(u^{0}))I_{\{s\leq \tau\}}dsdy\Big|^p\\
&\leq& L^p\Big|\int^{t}_0\big[\int^1_0|\partial_yG_{t\wedge \tau-s}(x,y)|^{2\delta}(1+|u^{\varepsilon}(s\wedge \tau)|^2+|u^0(s\wedge \tau)|^2)dy\big]^{\frac{1}{2}}\\
&& \times \big[\int^1_0|\partial_yG_{t\wedge \tau-s}(x,y)|^{2(1-\delta)}|u^{\varepsilon}(s\wedge \tau)-u^0(s\wedge \tau)|^2dy\big]^{\frac{1}{2}}I_{\{s\leq \tau\}}ds\Big|^p\\
&\leq& L^p\Big|\int^{t}_0(t\wedge \tau-s)^{-\delta}(1+\|u^{\varepsilon}(s\wedge \tau)\|^2_H+\|u^0(s\wedge \tau)\|^2_H)^{\frac{1}{2}}\\
&& \times \big[\int^1_0|\partial_yG_{t\wedge \tau-s}(x,y)|^{2(1-\delta)}|u^{\varepsilon}(s\wedge \tau)-u^0(s\wedge \tau)|^2 dy\big]^{\frac{1}{2}}I_{\{s\leq \tau\}}ds\Big|^p\\
&\leq& L^p(1+R^2+C_0)^{\frac{p}{2}}\Big|\int^{t}_0(t\wedge \tau-s)^{-\delta}\Big[\int^1_0|\partial_yG_{t\wedge \tau-s}(x,y)|^{2(1-\delta)}|u^{\varepsilon}-u^0|^2dy\Big]^{\frac{1}{2}}I_{\{s\leq \tau\}}ds\Big|^p\\
&\leq& L^p(1+R^2+C_0)^{\frac{p}{2}}\Big(\int^{t\wedge \tau}_0(t\wedge \tau-s)^{-2\delta}ds\Big)^{\frac{p}{2}}\\
&& \times \Big(\int^{t\wedge \tau}_0\int^1_0|\partial_yG_{t\wedge \tau-s}(x,y)|^{2(1-\delta)}|u^{\varepsilon}(s\wedge \tau)-u^0(s\wedge \tau)|^2dyds\Big)^{\frac{p}{2}}\\
&\leq& L^p(1+R^2+C_0)^{\frac{p}{2}}\Big(\int^{t\wedge \tau}_0(t\wedge \tau-s)^{-2\delta}ds\Big)^{\frac{p}{2}}\Big(\int^{t\wedge \tau}_0\int^1_0|\partial_yG_{t\wedge \tau-s}(x,y)|^{2(1-\delta) q}dyds\Big)^{\frac{p}{2q}}\\
&& \times \int^{t\wedge \tau}_0\int^1_0|u^{\varepsilon}(s\wedge \tau)-u^0(s\wedge \tau)|^p dyds,
\end{eqnarray*}
where $\frac{2}{p}+\frac{1}{q}=1$.

As $p>8$, we have $q=(1-2p^{-1})^{-1}<\frac{4}{3}$. Taking $\delta=\frac{15}{32}$, then
\[
-2 \delta>-1, \quad 0<2(1-\delta) q<\frac{3}{2}.
\]
With the aid of (\ref{eqq-6}), it follows that
\begin{eqnarray*}
&&|K^{\varepsilon}_2(t,x)|^p\\
&\leq& L^p(1+R^2+C_0)^{\frac{p}{2}}C(p,T)
\int^{t\wedge \tau}_0\int^1_0|u^{\varepsilon}(s\wedge \tau,y)-u^0(s\wedge \tau,y)|^p dyds.
\end{eqnarray*}
Finally, we estimate $K^{\varepsilon}_3(t,x)$. Define
\[
J(t,x)=\int^t_0\int^1_0G_{t-s}(x,y)\sigma(s,y,u^{\varepsilon}(s,y))I_{\{s\leq \tau\}}W(dyds).
\]
Then
\begin{eqnarray}\label{eq-31}
K^{\varepsilon}_3(t,x)=\sqrt{\varepsilon}J(t\wedge\tau,x).
\end{eqnarray}
Note that for any $0\le s<t\le T, x\in [0,1]$, by Burkholder-Davis-Gundy inequality, (H1), (\ref{eqq-5})  and (\ref{eqq-5-1}), we obtain
\begin{eqnarray}\notag
E|J(t,x)-J(s,x)|^p&\le&E\left|\int^s_0\int^1_0(G_{t-r}(x,y)-G_{s-r}(x,y))\sigma(r,y,u^{\varepsilon}(r,y))I_{\{r\leq \tau\}}W(dydr)\right|^p\\ \notag
&&+E\left|\int^t_s\int^1_0G_{t-r}(x,y)\sigma(r,y,u^{\varepsilon}(r,y))I_{\{r\leq \tau\}}W(dydr)\right|^p\\ \notag
&\le&K^pE\left|\int^s_0\int^1_0(G_{t-r}(x,y)-G_{s-r}(x,y))^2dydr\right|^{p/2}\\ \notag
&&+K^pE\left|\int^t_s\int^1_0G_{t-r}(x,y)^2dydr\right|^{p/2}\\
\label{eq-17}
&\le&K^p|t-s|^{\frac{p}{4}}.
\end{eqnarray}
Let
\[
\Psi(r)=r^p,\quad p(r)=r^{\frac{1}{4}},
\]
and
\[
\rho(x)=\int^T_0\int^T_0\left|\frac{|J(t,x)-J(s,x)|}{|t-s|^{\frac{1}{4}}}\right|^pds dt.
\]
Then, by Lemma \ref{lem-3}, for any $s,t\in [0,T]$, we have for any $x\in [0,1]$,
\begin{eqnarray*}
|J(t,x)-J(s,x)|&\leq& 8\int^{|t-s|}_0(\rho(x) r^{-2})^{\frac{1}{p}}dr^{\frac{1}{4}}\\
&=& 2\rho^{\frac{1}{p}}(x)\int^{|t-s|}_0 r^{-\frac{2}{p}-\frac{3}{4}}dr.
\end{eqnarray*}
As $p>8$, we have $-\frac{2}{p}-\frac{3}{4}>-1$, which yields
\begin{eqnarray}\label{eq-15}
|J(t,x)-J(s,x)|\leq  C\rho^{\frac{1}{p}}(x)|t-s|^{-\frac{2}{p}+\frac{1}{4}}.
\end{eqnarray}
Taking $s=0$ in (\ref{eq-15}), we have
\begin{eqnarray}\label{eq-16}
|J(t,x)|\leq  C\rho^{\frac{1}{p}}(x)|t|^{-\frac{2}{p}+\frac{1}{4}}\leq C(T,p)\rho^{\frac{1}{p}}(x).
\end{eqnarray}
By utilizing (\ref{eq-31}), (\ref{eq-17}) and (\ref{eq-16}), we know that
\[\int^1_0|K^\varepsilon_3(t,x)|^pdx\le\varepsilon^{p/2}C(T,p)\int^1_0\rho(x)dx,\quad \]
and $E\rho(x)\leq K^pT^2$.

Combining all the previous estimates, we get
\begin{eqnarray*}
&&\int^1_0|u^{\varepsilon}(t\wedge \tau,x)-u^0(t\wedge \tau,x)|^pdx\\
&\leq& L^p(1+R^2+C_0)^{\frac{p}{2}}C(p,T)\int^{t\wedge \tau}_0\int^1_0|u^{\varepsilon}(s\wedge \tau,y)-u^0(s\wedge \tau,y)|^pdyds\\
&& +L^p(1+R^2+C_0)^{\frac{p}{2}}C(p,T)
\int^{t\wedge \tau}_0\int^1_0|u^{\varepsilon}(s\wedge \tau,y)-u^0(s\wedge \tau,y)|^p dyds\\
&& +\varepsilon^{p/2}C(T,p)\int^1_0\rho(x)dx\\.
\end{eqnarray*}
By using Gronwall inequality, we get
\begin{eqnarray}\notag
&&\int^1_0|u^{\varepsilon}(t\wedge \tau,x)-u^0(t\wedge \tau,x)|^pdx\\
\label{eq-33}
&\leq& \Big[\varepsilon^{p/2}C(T,p)\int^1_0\rho(x)dx\Big] \exp\Big\{C(R,p,T,L,C_0)\Big\}.
\end{eqnarray}
Thus,
\begin{eqnarray*}
E\int^1_0|u^{\varepsilon}(t\wedge \tau,x)-u^0(t\wedge \tau,x)|^pdx&\leq& \Big[\varepsilon^{\frac{p}{2}}C(T,p)\int^1_0E\rho(x)dx\Big] \exp\Big\{C(R,p,T,L,C_0)\Big\}\\
&\leq& \varepsilon^{\frac{p}{2}}C(T,p)K^p T^2 \exp\Big\{C(R,p,T,L,C_0)\Big\},
\end{eqnarray*}
which implies (\ref{eqq-1}).
\end{proof}
As a consequence, we have
\begin{cor}\label{cor-1}
For any $p>8$, it holds that
\begin{eqnarray}\label{eq-36}
\sup_{0\leq t\leq T}\sup_{0<\varepsilon\leq 1}E\int^1_0|Y^{\varepsilon}(t\wedge  \tau,x)|^pdx
\leq C_1.
\end{eqnarray}
\end{cor}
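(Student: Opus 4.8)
The plan is to derive Corollary \ref{cor-1} directly from Lemma \ref{lem-2} together with the definition $Y^{\varepsilon}(t,x)=\varepsilon^{-1/2}\big(u^{\varepsilon}(t,x)-u^0(t,x)\big)$. The key observation is that, because the stopping time $\tau=\tau^{\varepsilon,R}$ in \eqref{eee-30} does not depend on the spatial variable, we have the pointwise identity $Y^{\varepsilon}(t\wedge\tau,x)=\varepsilon^{-1/2}\big(u^{\varepsilon}(t\wedge\tau,x)-u^0(t\wedge\tau,x)\big)$ for a.e.\ $x\in[0,1]$, so that
\[
E\int^1_0|Y^{\varepsilon}(t\wedge\tau,x)|^p\,dx=\varepsilon^{-\frac{p}{2}}E\int^1_0|u^{\varepsilon}(t\wedge\tau,x)-u^0(t\wedge\tau,x)|^p\,dx.
\]
By Lemma \ref{lem-2}, the right-hand side is bounded above by $\varepsilon^{-\frac{p}{2}}\cdot\varepsilon^{\frac{p}{2}}C_1=C_1$, and this bound is uniform in $t\in[0,T]$ and in $\varepsilon\in(0,1]$ since $C_1=C(R,K,L,p,T,C_0)$ does not depend on $\varepsilon$ or $t$. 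Taking $\sup_{0\le t\le T}\sup_{0<\varepsilon\le1}$ on the left then yields \eqref{eq-36}.

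There is essentially no obstacle here: the content is entirely contained in Lemma \ref{lem-2}, and the corollary is the statement of that lemma rephrased in terms of $Y^{\varepsilon}$ after dividing by $\varepsilon^{p/2}$. The only point that needs a word of care is the interchange between the a.e.-in-$x$ definition of $Y^{\varepsilon}$ (which comes from the mild formulation \eqref{e-3}, valid for a.e.\ $x$) and the integral over $x\in[0,1]$; but since we only ever integrate $|Y^{\varepsilon}(t\wedge\tau,\cdot)|^p$ over $[0,1]$, modifying $Y^{\varepsilon}$ on a null set of $x$'s does not affect either side, so the identity above holds as stated. The exponent constraint $p>8$ is inherited verbatim from Lemma \ref{lem-2}, and no new restriction is introduced.
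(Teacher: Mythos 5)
Your proof is correct and is exactly the argument the paper intends: the corollary is stated as an immediate consequence of Lemma \ref{lem-2}, obtained by writing $Y^{\varepsilon}=\varepsilon^{-1/2}(u^{\varepsilon}-u^{0})$ and cancelling the factor $\varepsilon^{p/2}$, with uniformity in $t$ and $\varepsilon$ coming from the fact that $C_1=C(R,K,L,p,T,C_0)$ is independent of both. Your remark about the a.e.-in-$x$ identification is a harmless extra precaution and introduces nothing beyond the paper's (implicit) reasoning.
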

Define
$Z^{\varepsilon}=Y^{\varepsilon}-Y=\frac{u^{\varepsilon}-u^0}{\sqrt{\varepsilon}}-Y$, we claim that
\begin{lemma}\label{lem-4}
For any $R>0, p>14$, there exists a constant $C_2=C(K,p,T, L, C_0,C_1)$ such that
\begin{eqnarray*}
\sup_{0\leq t\leq  T}E\int^1_0|Z^{\varepsilon}(t\wedge\tau ,x)|^pdx
\leq \varepsilon^{\frac{p}{2}} C_2.
\end{eqnarray*}
\end{lemma}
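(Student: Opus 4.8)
The strategy is to derive the mild-form equation for $Z^\varepsilon$ by subtracting (\ref{eqq-11}) from (\ref{eqq-12}) (both stopped at $\tau$), decompose it into the three pieces $I_1^\varepsilon,I_2^\varepsilon,I_3^\varepsilon$ displayed in the introduction, estimate each $\int_0^1|I_i^\varepsilon(t\wedge\tau,x)|^p\,dx$, and close with Gronwall. The linearisation errors must be rewritten in a form that splits into a $Z^\varepsilon$-term (which Gronwall absorbs) and a remainder controlled by Lemma \ref{lem-2} and Corollary \ref{cor-1}. For the drift term, write
\[
\frac{b(u^\varepsilon)-b(u^0)}{\sqrt\varepsilon}-\partial_rb(u^0)Y
=\partial_rb(u^0)Z^\varepsilon+\Big(\frac{b(u^\varepsilon)-b(u^0)}{\sqrt\varepsilon}-\partial_rb(u^0)Y^\varepsilon\Big),
\]
and by a first-order Taylor expansion with the Lipschitz bound (\ref{eqqq-1}) on $\partial_rb$, the last bracket is bounded by $\tfrac{L}{2\sqrt\varepsilon}|u^\varepsilon-u^0|^2=\tfrac{\sqrt\varepsilon}{2}L|Y^\varepsilon|^2$. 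Doing the same for $g$ — here one keeps the second-order term and uses $|\partial_r^2g|\le K$ from (\ref{eqqq-2}) — gives an analogous remainder $\tfrac{\sqrt\varepsilon}{2}K|Y^\varepsilon|^2$. Thus each $I_i^\varepsilon$ has a "good" part with integrand $\lesssim |Z^\varepsilon|+ (\text{kernel weight})$ and a "remainder" part with integrand $\lesssim \sqrt\varepsilon|Y^\varepsilon|^2$.

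For $I_1^\varepsilon$ and $I_2^\varepsilon$ I would bound $\int_0^1|I_i^\varepsilon(t\wedge\tau,x)|^p\,dx$ exactly as in the proof of Lemma \ref{lem-2}: apply H\"older in the $y$-variable splitting the kernel weight off ($G^{2q}$ with $2q<3$ via (\ref{eqq-5}) for $I_1^\varepsilon$; $|\partial_yG|^{2(1-\delta)q}$ with the time singularity $(t-s)^{-2\delta}$ integrable and $2(1-\delta)q<3/2$ via (\ref{eqq-6}) for $I_2^\varepsilon$), producing
\[
\int_0^1|I_i^\varepsilon(t\wedge\tau,x)|^p\,dx\le C\int_0^{t\wedge\tau}\!\!\int_0^1\Big(|Z^\varepsilon(s\wedge\tau,y)|^p+\varepsilon^{p/2}|Y^\varepsilon(s\wedge\tau,y)|^{2p}\Big)dy\,ds.
\]
The $|Y^\varepsilon|^{2p}$ integral is where the exponent threshold $p>14$ enters: to invoke Corollary \ref{cor-1} (which needs its exponent $>8$) one needs $2p>8$ trivially, but the true constraint comes from needing Lemma \ref{lem-2} at exponent $2p$, which via its hypothesis $2p>8$ is again automatic — the real source of $p>14$ is the stochastic term below, so I would track exponents carefully there. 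Since $Y^\varepsilon=Z^\varepsilon+Y$, and $Y$ has finite $p$-moments uniformly in $t$ (provable by a deterministic-type estimate from (\ref{eqq-12}) together with $\sup_t\|u^0(t)\|_H\le\sqrt{C_0}$ from Lemma \ref{lem-5} and the Green-function bounds), the term $\varepsilon^{p/2}\int|Y^\varepsilon|^{2p}$ contributes $\varepsilon^{p/2}C$ plus $\varepsilon^{p/2}\int|Z^\varepsilon|^{2p}$; one must be slightly careful but since we are ultimately proving $E\int|Z^\varepsilon|^p\le\varepsilon^{p/2}C_2$, a bootstrap/standard argument (or simply bounding $\varepsilon^{p/2}|Y^\varepsilon|^{2p}\le C(|Y^\varepsilon|^p+\varepsilon^p|Y^\varepsilon|^{2p})$ crudely, then using that $\varepsilon^{p/2}|Y^\varepsilon|^{2p}$ already has a spare $\varepsilon^{p/2}$) handles it after taking expectations and using Corollary \ref{cor-1}.

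For the stochastic term $I_3^\varepsilon(t,x)=\int_0^t\int_0^1 G_{t-s}(x,y)\big(\sigma(u^\varepsilon)-\sigma(u^0)\big)I_{\{s\le\tau\}}W(dy\,ds)$, the Lipschitz bound on $\sigma$ gives integrand $\le L|u^\varepsilon-u^0|=L\sqrt\varepsilon|Y^\varepsilon|$, so by Burkholder–Davis–Gundy (now applicable since the integrand has no $t$-dependence once $t$ is fixed), H\"older in $(s,y)$ against $G^{2q}$ with $2q<3$, and (\ref{eqq-5}),
\[
E\int_0^1|I_3^\varepsilon(t\wedge\tau,x)|^p\,dx\le \varepsilon^{p/2}C(p,T,L)\int_0^{t\wedge\tau}\!\!\int_0^1 E|Y^\varepsilon(s\wedge\tau,y)|^p\,dy\,ds\le\varepsilon^{p/2}C_1\,C(p,T,L),
\]
using Corollary \ref{cor-1}. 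Collecting the three estimates, taking expectations, and writing $\phi(t):=\sup_{r\le t}E\int_0^1|Z^\varepsilon(r\wedge\tau,x)|^p\,dx$ (which is finite by Lemma \ref{lem-2} and the moment bounds on $Y$), one obtains $\phi(t)\le\varepsilon^{p/2}C+C\int_0^t\phi(s)\,ds$, and Gronwall yields $\phi(T)\le\varepsilon^{p/2}C_2$. The main obstacle is bookkeeping the exponents so that every kernel power stays below the critical value ($2q<3$, $2(1-\delta)q<3/2$, $-2\delta>-1$) while also leaving enough room to apply Lemma \ref{lem-2}/Corollary \ref{cor-1} to the quadratic remainder terms at exponent $2p$ — this is precisely what forces $p>14$ — together with the minor technical point of first establishing uniform $p$-moment bounds for the limiting process $Y$.
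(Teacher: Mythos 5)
Your treatment of the drift terms $I_1^\varepsilon,I_2^\varepsilon$ matches the paper: the same splitting $\partial_r b(u^0)Z^\varepsilon$ (resp.\ $\partial_r g(u^0)Z^\varepsilon$) plus a quadratic remainder of size $\sqrt\varepsilon|Y^\varepsilon|^2$, the same H\"older/kernel bookkeeping via (\ref{eqq-5})--(\ref{eqq-6}), and the quadratic remainder absorbed through Corollary \ref{cor-1} at exponent $2p$ (your detour through separate moment bounds for $Y$ is unnecessary and risks circularity, but the direct use of Corollary \ref{cor-1} that you also mention is what the paper does). The genuine gap is in the stochastic term. You claim Burkholder--Davis--Gundy is ``now applicable since the integrand has no $t$-dependence once $t$ is fixed,'' but the quantity to be bounded is $E\int_0^1|I_3^\varepsilon(t\wedge\tau,x)|^p\,dx$ with $I_3^\varepsilon(t,x)=J(t\wedge\tau,x)$, where $J(u,x)=\int_0^u\int_0^1G_{u-s}(x,y)\bigl(\sigma(u^\varepsilon)-\sigma(u^0)\bigr)I_{\{s\le\tau\}}W(dy\,ds)$. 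Because the kernel $G_{u-s}$ depends on the evaluation time $u$, the map $u\mapsto J(u,x)$ is not a martingale, so neither BDG nor optional stopping justifies estimating $J$ at the random time $t\wedge\tau$ by a fixed-time BDG bound; your displayed inequality with the random upper limit $t\wedge\tau$ sitting around an expectation is not a legitimate step. This is exactly the difficulty the paper singles out in the introduction, and its proof of Lemma \ref{lem-4} resolves it differently: first a fixed-time increment estimate $E|J(t,x)-J(s,x)|^p\le\varepsilon^{p/2}C|t-s|^{\frac{(3-2q')p}{4q'}}$ (BDG at deterministic times, H\"older with exponents $p',q'$, $\kappa=p/(2p')$, and Lemma \ref{lem-2}), with $q'$ chosen so that $\frac{(3-2q')p}{4q'}>2$, then Garsia's lemma (Lemma \ref{lem-3}) to convert this into a pathwise bound $\sup_{u\le T}|J(u,x)|\le C\rho^{1/p}(x)$ with $E\int_0^1\rho(x)\,dx\le\varepsilon^{p/2}C$, which can be evaluated at $t\wedge\tau$ and fed into the pathwise Gronwall argument.

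This missing step is also where the threshold $p>14$ actually originates: the window $\frac{p}{p-2}<q'<\frac{3p}{8+2p}$ needed to have simultaneously $2q'<3$, $\kappa>1$ and H\"older exponent $>2$ is nonempty only for $p>14$. Your plan acknowledges that ``the real source of $p>14$ is the stochastic term'' but then gives an argument for that term which, if it worked, would need nothing beyond $p>8$; so the constraint is never accounted for, confirming that the Garsia-type continuity estimate is the idea missing from your proposal. With that ingredient inserted (and the Gronwall step run pathwise before taking expectations, as in the paper), the rest of your outline goes through.
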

\begin{proof}
From (\ref{eqq-11}) and (\ref{eqq-12}), we deduce that
\begin{eqnarray}\notag
&&Z^{\varepsilon}(t\wedge \tau,x)\\ \notag
&=&\int^t_0\int^1_0 G_{t\wedge \tau-s}(x,y)\Big(\frac{b(s,y,u^{\varepsilon}(s,y))-b(s,y,u^{0}(s,y))}{\sqrt{\varepsilon}}-\partial_rb(s,y,u^0(s,y))Y\Big)I_{\{s\leq \tau\}}dyds\\
\notag
&&\  -\int^t_0\int^1_0 \partial_y G_{t\wedge \tau-s}(x,y)\Big(\frac{g(s,y,u^{\varepsilon}(s,y))-g(s,y,u^{0}(s,y))}{\sqrt{\varepsilon}}-\partial_rg(s,y,u^0(s,y))Y\Big)I_{\{s\leq \tau\}}dyds\\
\notag
&&\ +\int^t_0\int^1_0G_{t\wedge \tau-s}(x,y)(\sigma(s,y,u^{\varepsilon}(s,y))-\sigma(s,y,u^{0}(s,y)))I_{\{s\leq \tau\}}W(dyds)\\
\label{eqq-15}
&=:& I^{\varepsilon}_1(t,x)+I^{\varepsilon}_2(t,x)+I^{\varepsilon}_3(t,x) .
\end{eqnarray}
%
With the help of (H3), for $\theta\in (u^{0}(s,y),u^{\varepsilon}(s,y))$, we get
\begin{eqnarray*}
&&\Big|\frac{g(s,y,u^{\varepsilon}(s,y))-g(s,y,u^{0}(s,y))}{\sqrt{\varepsilon}}-\partial_rg(s,y,u^0(s,y))Y\Big|\\
&=&|\partial_rg(s,y,u^0(s,y))Z^{\varepsilon}+\frac{1}{2}\sqrt{\varepsilon}\partial^2_rg(s,y,\theta)|Y^{\varepsilon}|^2|\\
&\leq& K(1+|u^0|)|Z^{\varepsilon}|+\frac{1}{2}\sqrt{\varepsilon}K|Y^{\varepsilon}|^2,
\end{eqnarray*}
then, it yields that
\begin{eqnarray*}
|I^{\varepsilon}_2(t,x)|^p
&\leq& C(p)K^p\Big|\int^t_0\int^1_0 \partial_y G_{t\wedge \tau-s}(x,y)(1+|u^0|)|Z^{\varepsilon}|I_{\{s\leq \tau\}}dyds\Big|^p\\
&& +C(p)\varepsilon^{\frac{p}{2}}K^p\Big|\int^t_0\int^1_0 \partial_y G_{t\wedge \tau-s}(x,y)|Y^{\varepsilon}(s,y)|^2I_{\{s\leq \tau\}}dyds\Big|^p\\
&:=& C(p)K^p(I^{\varepsilon}_{2,1}+\varepsilon^{\frac{p}{2}}I^{\varepsilon}_{2,2}).
\end{eqnarray*}
By H\"{o}lder inequality and (\ref{eqq-4}), for $0<\delta<1$, we get
\begin{eqnarray*}
I^{\varepsilon}_{2,1}&\leq& \Big|\int^{t}_0\Big(\int^1_0 |\partial_y G_{t\wedge \tau-s}(x,y)|^{2\delta}(1+|u^0|^2)dy\Big)^{\frac{1}{2}}\Big(\int^1_0 |\partial_y G_{t\wedge \tau-s}(x,y)|^{2(1-\delta)}|Z^{\varepsilon}(s\wedge \tau)|^2dy\Big)^{\frac{1}{2}}I_{\{s\leq \tau\}}ds\Big|^p\\
&\leq&\Big|\int^{t\wedge \tau}_0(t\wedge \tau-s)^{-\delta}(1+\|u^0(s)\|_H)\Big(\int^1_0 |\partial_y G_{t\wedge \tau-s}(x,y)|^{2(1-\delta)}|Z^{\varepsilon}(s\wedge \tau,y)|^2dy\Big)^{\frac{1}{2}}ds\Big|^p\\
&\leq&(1+C_0)^{\frac{p}{2}}\Big|\int^{t\wedge \tau}_0(t\wedge \tau-s)^{-\delta}\Big(\int^1_0 |\partial_y G_{t\wedge \tau-s}(x,y)|^{2(1-\delta)}|Z^{\varepsilon}(s\wedge \tau,y)|^2dy\Big)^{\frac{1}{2}}ds\Big|^p\\
&\leq&(1+C_0)^{\frac{p}{2}}\big(\int^{t\wedge \tau}_0(t\wedge \tau-s)^{-2\delta}ds\big)^{\frac{p}{2}}\Big(\int^{t\wedge \tau}_0\int^1_0|\partial_y G_{t\wedge \tau-s}(x,y)|^{2(1-\delta)}|Z^{\varepsilon}(s\wedge \tau,y)|^2dyds\Big)^{\frac{p}{2}}\\
&\leq&(1+C_0)^{\frac{p}{2}}\Big(\int^{t\wedge \tau}_0(t\wedge \tau-s)^{-2\delta}ds\Big)^{\frac{p}{2}}\Big(\int^{t\wedge \tau}_0\int^1_0|\partial_y G_{t\wedge \tau-s}(x,y)|^{2(1-\delta) q}dyds\Big)^{\frac{p}{2q}}\\
&& \times\Big(\int^{t\wedge \tau}_0\int^1_0|Z^{\varepsilon}(s\wedge \tau,y)|^pdyds\Big),
\end{eqnarray*}
where $\frac{2}{p}+\frac{1}{q}=1$.

As $p>8$, we have $1<q<\frac{4}{3}$, taking $\delta=\frac{15}{32}$, it yields
\[
-2 \delta>-1, \quad 0<2(1-\delta)q<\frac{3}{2}.
\]
Then, by (\ref{eqq-6}), we get
\begin{eqnarray}\notag
I^{\varepsilon}_{2,1}
\label{eee-9}
\leq (1+C_0)^{\frac{p}{2}}C(T,p)\int^{t\wedge \tau}_0\int^1_0|Z^{\varepsilon}(s\wedge \tau,y)|^pdyds.
\end{eqnarray}
By H\"{o}lder inequality, we deduce that
\begin{eqnarray*}
I^{\varepsilon}_{2,2}
\leq \Big(\int^{t\wedge \tau}_0\int^1_0 |\partial_y G_{t\wedge \tau-s}(x,y)|^rdyds\Big)^{\frac{p}{r}}\Big(\int^t_0\int^1_0|Y^{\varepsilon}(s\wedge \tau,y)|^{2p}dyds\Big),
\end{eqnarray*}
where $\frac{1}{r}+\frac{1}{p}=1$. As $p>8$, we have $1<r<\frac{8}{7}<\frac{3}{2}$, by (\ref{eqq-6}), we get
\begin{eqnarray}\notag
I^{\varepsilon}_{2,2}\label{eq-34}
\leq C(T, p)\Big(\int^t_0\int^1_0|Y^{\varepsilon}(s\wedge \tau,y)|^{2p}dyds\Big).
\end{eqnarray}

Combining (\ref{eee-9}) and (\ref{eq-34}), we deduce that
\begin{eqnarray*}
|I^{\varepsilon}_2(t,x)|^p&\leq & K^p(1+C_0)^{\frac{p}{2}}C(T,p)\int^{t\wedge \tau}_0\int^1_0|Z^{\varepsilon}(s\wedge \tau,y)|^pdyds\\
&& +C(K,T,R,p) \varepsilon^{\frac{p}{2}}\left(\int^{t}_0\int^1_0 |Y^{\varepsilon}(s\wedge \tau,y)|^{2p}dyds\right).
\end{eqnarray*}
Similar to the proof of $I^{\varepsilon}_2(t,x)$, we get
\begin{eqnarray*}
|I^{\varepsilon}_1(t,x)|^p
&\leq& K^p(1+C_0)^{\frac{p}{2}}C(T,p)\int^{t\wedge \tau}_0\int^1_0|Z^{\varepsilon}(y,s\wedge \tau)|^pdyds
\\
&& +C(K,T,R,p) \varepsilon^{\frac{p}{2}}\left(\int^{t}_0\int^1_0 |Y^{\varepsilon}(s\wedge \tau,y)|^{2p}dyds\right).
\end{eqnarray*}
To estimate $I^{\varepsilon}_3$, we define
\[
J(t,x)=\int^t_0\int^1_0G_{t-s}(x,y)(\sigma(s,y,u^{\varepsilon}(s,y))-\sigma(s,y,u^{0}(s,y)))I_{\{s\leq \tau\}}W(dyds).
\]
Then,
\begin{eqnarray}\label{eq-37}
I^{\varepsilon}_3(t,x)=J(t\wedge\tau,x).
\end{eqnarray}
Note that for any $0\le s<t\le T, x\in [0,1]$, by Burkholder-Davis-Gundy inequality, (H1) and (\ref{eqq-5-1}), for some $\kappa\geq 1$, we obtain
\begin{eqnarray}\notag
&&E|J(t,x)-J(s,x)|^p\\ \notag
&\le&E\left|\int^s_0\int^1_0(G_{t-r}(x,y)-G_{s-r}(x,y))(\sigma(r,y,u^{\varepsilon}(r,y))-\sigma(r,y,u^{0}(r,y)))I_{\{r\leq \tau\}}W(dydr)\right|^p\\ \notag
&&+E\left|\int^t_s\int^1_0G_{t-r}(x,y)(\sigma(r,y,u^{\varepsilon}(r,y))-\sigma(r,y,u^{0}(r,y)))I_{\{r\leq \tau\}}W(dydr)\right|^p\\ \notag
&\le&L^pE\left|\int^s_0\int^1_0(G_{t-r}(x,y)-G_{s-r}(x,y))^2|u^{\varepsilon}(r,y)-u^{0}(r,y)|^2I_{\{r\leq \tau\}}dydr\right|^{p/2}\\ \notag
&&+L^pE\left|\int^t_s\int^1_0G_{t-r}(x,y)^2|u^{\varepsilon}(r,y)-u^{0}(r,y)|^2I_{\{r\leq \tau\}}dydr\right|^{p/2}\\ \notag
&\leq& L^p\Big(\int^s_0\int^1_0|G_{t-r}(x,z)-G_{s-r}(x,y)|^{2q'}dy dr\Big)^{\frac{p}{2q'}}\times E\Big(\int^t_0\int^1_0|u^{\varepsilon}(r\wedge \tau,y)-u^{0}(r\wedge \tau,y)|^{2p'}dydr\Big)^{\frac{p}{2p'}}\\ \notag
&& +L^p\Big(\int^t_s\int^1_0G_{t-r}(x,y)^{2q'}dydr\Big)^{\frac{p}{2q'}}\times E\Big(\int^t_s\int^1_0|u^{\varepsilon}(r\wedge \tau,y)-u^{0}(r\wedge \tau,y)|^{2p'}dydr\Big)^{\frac{p}{2p'}}\\ \notag
&\leq& L^p\Big(\int^s_0\int^1_0|G_{t-r}(x,z)-G_{s-r}(x,y)|^{2q'}dy dr\Big)^{\frac{p}{2q'}}\times  E\Big(\int^t_0\int^1_0|u^{\varepsilon}(r\wedge \tau,y)-u^{0}(r\wedge \tau,y)|^{2p'\kappa}dydr\Big)^{\frac{p}{2p'\kappa}}\\
\label{eq-17-1}
&& +L^p\Big(\int^t_s\int^1_0G_{t-r}(x,y)^{2q'}dydr\Big)^{\frac{p}{2q'}}\times E\Big(\int^t_s\int^1_0|u^{\varepsilon}(r\wedge \tau,y)-u^{0}(r\wedge \tau,y)|^{2p'\kappa}dydr\Big)^{\frac{p}{2p'\kappa}}.
\end{eqnarray}
where $\frac{1}{p'}+\frac{1}{q'}=1$.

Taking $\kappa=\frac{p}{2p'}$. When $p>14$, we have $\frac{p}{p-2}<\frac{3p}{8+2p}<\frac{3}{2}$, choosing $\frac{p}{p-2}<q'<\frac{3p}{8+2p}$, then
\[
2q'<3, \quad \kappa>1,\quad \frac{(3-2q')p}{4q'}>2.
\]
As a result, by Lemma \ref{lem-2}, we deduce that
\begin{eqnarray}\notag
E|J(t,x)-J(s,x)|^p
&\leq& C(T)L^p|t-s|^{\frac{(3-2q')p}{4q'}} \Big(\int^t_0E\int^1_0|u^{\varepsilon}(r\wedge \tau,y)-u^{0}(r\wedge \tau,y)|^{p}dydr\Big)\\ \notag
&& +C(T)L^p|t-s|^{\frac{(3-2q')p}{4q'}} \Big(\int^t_sE\int^1_0|u^{\varepsilon}(r\wedge \tau,y)-u^{0}(r\wedge \tau,y)|^{p}dydr\Big)\\
\label{eq-38}
&\leq&  \varepsilon^{\frac{p}{2}}C(L,p,T,C_1)|t-s|^{\frac{(3-2q')p}{4q'}}.
\end{eqnarray}
Let
\[
\Psi(r)=r^p,\quad p(r)=r^{\frac{(3-2q')}{4q'}},
\]
and
\[
\rho(x)=\int^T_0\int^T_0\Big|\frac{|J(t,x)-J(s,x)|}{|t-s|^{\frac{(3-2q')}{4q'}}}\Big|^pdsdt.
\]
Then, by Lemma \ref{lem-3}, for any $s,t\in [0,T]$, $x\in [0,1]$, we have
\begin{eqnarray}\notag
|J(t,x)-J(s,x)|&\leq& 8\int^{|t-s|}_0(\rho(x)r^{-2})^{\frac{1}{p}}dr^{\frac{(3-2q')}{4q'}}\\ \notag
&\leq& C\rho^{\frac{1}{p}}(x)\int^{|t-s|}_0r^{-\frac{2}{p}+\frac{(3-2q')}{4q'}-1}dr.
\end{eqnarray}
As $\frac{(3-2q')p}{4q'}>2$, then $-\frac{2}{p}+\frac{(3-2q')}{4q'}>0$, we get
\begin{eqnarray}\label{eq-23-1}
|J(t,x)-J(s,x)|\leq C\rho^{\frac{1}{p}}(x)|t-s|^{-\frac{2}{p}+\frac{(3-2q')}{4q'}}.
\end{eqnarray}
Taking $s=0$ in (\ref{eq-23-1}), we obtain
\begin{eqnarray}\notag
|J(t,x)|\leq C(T)\rho^{\frac{1}{p}}(x).
\end{eqnarray}
Utilizing (\ref{eq-37}) and (\ref{eq-38}), we deduce that
\begin{eqnarray}\label{eee-1}
\int^1_0|I^{\varepsilon}_3(t,x)|^pdx\le C(T)\int^1_0\rho(x) dx
\end{eqnarray}
and
\begin{equation}\label{eq-26}
 E\int^1_0\rho(x) dx\le \varepsilon^{\frac{p}{2}}C(L,p,T,C_1)T^2.
\end{equation}
Combining all the above estimates,  we get
\begin{eqnarray*}
\int^1_0|Z^{\varepsilon}(t\wedge \tau,x)|^pdx&\leq& K^p(1+C_0)^{\frac{p}{2}}C(T,p)\int^{t\wedge \tau}_0\int^1_0|Z^{\varepsilon}(y,s\wedge \tau)|^pdyds
\\
&& +K^p(1+C_0)^{\frac{p}{2}}C(T,p)\int^{t\wedge \tau}_0\int^1_0|Z^{\varepsilon}(s\wedge \tau,y)|^pdyds\\
&& +C(K,T,R,p) \varepsilon^{\frac{p}{2}}\left(\int^T_0\int^1_0 |Y^{\varepsilon}(s\wedge \tau,y)|^{2p}dyds\right)+C(T)\int^1_0\rho(x) dx.
\end{eqnarray*}
By Gronwall inequality, it follows that
\begin{eqnarray*}
&&\int^1_0|Z^{\varepsilon}(t\wedge \tau,x)|^pdx\\
&\leq& \Big[C(K,T,R,p,C_0) \varepsilon^{\frac{p}{2}}\left(\int^{T}_0\int^1_0 |Y^{\varepsilon}(s\wedge \tau,y)|^{2p}dyds\right) +C(T)\int^1_0\rho(x)dx\Big]\\
&& \times \exp\Big\{K^p(1+C_0)^{\frac{p}{2}}C(T,p)\Big\}.
\end{eqnarray*}
Taking expectation, by (\ref{eq-36}) and (\ref{eq-26}), we get
\begin{eqnarray*}
&&E\int^1_0|Z^{\varepsilon}(t\wedge \tau,x)|^pdx\\
&\leq& \Big[C(K,T,R,p,C_0) \varepsilon^{\frac{p}{2}}C_1T +\varepsilon^{\frac{p}{2}}C(L,p,T,C_1)\Big] \exp\{K^p(1+C_0)^{\frac{p}{2}}C(T,p)\}.
\end{eqnarray*}
We complete the proof.

\end{proof}

\subsection{Proof of CLT for semilinear SPDE}
\textbf{Proof of Theorem \ref{thm-2}}. \quad
Recall $\tau^{\varepsilon,R}$ is defined by (\ref{eee-30}). For any $\delta>0$, it follows that
\begin{eqnarray*}
&&P\Big(\sup_{t\in [0,T]}\|Y^{\varepsilon}(t)-Y(t)\|_H>\delta\Big)\\
&\leq & P\Big(\sup_{t\in [0,T]}\|Y^{\varepsilon}(t)-Y(t)\|_H>\delta, \tau^{\varepsilon, R}\leq T\Big)+P\Big(\sup_{t\in [0,T]}\|Y^{\varepsilon}(t)-Y(t)\|_H>\delta, \tau^{\varepsilon, R}> T\Big)\\
&\leq & P\Big(\tau^{\varepsilon, R}\leq T\Big)+P\Big(\sup_{t\in [0,\tau^{\varepsilon, R}]}\|Y^{\varepsilon}(t)-Y(t)\|_H>\delta\Big).
\end{eqnarray*}
By (\ref{eqq-13}), we get for any $\varepsilon\in (0,1]$,
\begin{eqnarray*}
P(\tau^{\varepsilon, R}\leq T)\rightarrow 0,\quad {\rm{as}} \ R\rightarrow \infty.
\end{eqnarray*}
Fix some $R>0$, denote by $\tau=\tau^{\varepsilon,R}$.
Recall $Z^{\varepsilon}(t\wedge \tau,x)=Y^{\varepsilon}(t\wedge \tau,x)-Y=\frac{u^{\varepsilon}(t\wedge \tau,x)-u^{0}(t\wedge \tau,x)}{\sqrt{\varepsilon}}-Y$ satisfies (\ref{eqq-15}). For the readers' convenience, we state it again as follows.
\begin{eqnarray}\notag
Z^{\varepsilon}(t\wedge \tau,x)&=&\int^t_0\int^1_0G_{t\wedge \tau-s}(x,y)\Big(\frac{b(u^{\varepsilon})-b(u^0)}{\sqrt{\varepsilon}}-\partial_rb(s,y,u^0(s,y))Y\Big)I_{\{s\leq \tau\}}dsdy\\
\notag
&&\ -\int^t_0\int^1_0\partial_yG_{t\wedge \tau-s}(x,y)\Big(\frac{g(u^{\varepsilon})-g(u^0)}{\sqrt{\varepsilon}}-\partial_rg(s,y,u^0(s,y))Y\Big)I_{\{s\leq \tau\}}dsdy\\
\notag
&&\ +\int^t_0\int^1_0G_{t\wedge \tau-s}(x,y)\Big(\sigma(s,y,u^{\varepsilon}(s,y))-\sigma(s,y,u^0(s,y))\Big)I_{\{s\leq \tau\}}W(dyds)\\
\label{eee-21}
&:=& I^{\varepsilon}_1(t,x)+I^{\varepsilon}_2(t,x)+I^{\varepsilon}_3(t,x).
\end{eqnarray}
In the rest part, we aim to prove $Z^{\varepsilon}(t\wedge \tau,x)\rightarrow 0$ in probability in $C([0, T]; H)$ as $\varepsilon\rightarrow 0$.

By (\ref{eee-1}) and (\ref{eq-26}), for $p>14$, it yields
\begin{eqnarray}\notag
E\sup_{t\in [0,T]}\int^1_0|I^{\varepsilon}_3(t,x)|^pdx\leq  \varepsilon^{\frac{p}{2}}C(L,p,T,C_1).
\end{eqnarray}
By Chebyshev inequality, we get for the above $\delta>0$,
\begin{eqnarray}\notag
P\Big(\sup_{t\in [0,T]}\|I^{\varepsilon}_3(t)\|_H>\delta\Big)
&\leq& \frac{ E\sup_{t\in [0,T]}\|I^{\varepsilon}_3(t)\|^p_H}{\delta^p}\\ \notag
&\leq&\frac{C E\sup_{t\in [0,T]}\|I^{\varepsilon}_3(t)\|^p_p}{\delta^p}\\ \notag
&\leq& \frac{ C E\sup_{t\in [0,T]}\int^1_0|I^{\varepsilon}_3(t,x)|^pdx}{\delta^p}\\ \notag
&\leq& \varepsilon^{\frac{p}{2}}\frac{ C(L,p,T,C_1)}{\delta^p}\\
\label{eq-27}
& \rightarrow& 0,\quad {\rm{as}}\ \varepsilon\rightarrow 0,
\end{eqnarray}
i.e. $I^{\varepsilon}_3(t,x)\rightarrow 0$ in probability in $C([0, T]; H)$ as $\varepsilon\rightarrow 0$.

Define
\begin{eqnarray*}
\bar{I}^{\varepsilon}_2(t,x):=-\int^t_0\int^1_0\partial_yG_{t-s}(x,y)\Big(\frac{g(u^{\varepsilon})-g(u^0)}{\sqrt{\varepsilon}}-\partial_rg(s,y,u^0(s,y))Y\Big)I_{\{s\leq \tau\}}dsdy,
\end{eqnarray*}
then
\begin{eqnarray}\label{eq-30}
{I}^{\varepsilon}_2(t,x)=\bar{I}^{\varepsilon}_2(t\wedge \tau,x).
\end{eqnarray}
Note that for $t_1,t_2\in [0,T], t_1>t_2$, we have
\begin{eqnarray*}
&&\bar{I}^{\varepsilon}_2(t_1,x)-\bar{I}^{\varepsilon}_2(t_2,x)\\
&=&\int^{t_1}_0\int^1_0\partial_yG_{t_1-s}(x,y)\Big[\frac{g(u^{\varepsilon})-g(u^0)}{\sqrt{\varepsilon}}-\partial_rg(s,y,u^0(s,y))Y\Big]I_{\{s\leq \tau\}}dsdy\\
&& -\int^{t_2}_0\int^1_0\partial_yG_{t_2-s}(x,y)\Big[\frac{g(u^{\varepsilon})-g(u^0)}{\sqrt{\varepsilon}}-\partial_rg(s,y,u^0(s,y))Y\Big]I_{\{s\leq \tau\}}dsdy\\
&=& \int^{t_1}_{t_2}\int^1_0\partial_yG_{t_1-s}(x,y)\Big[\frac{g(u^{\varepsilon})-g(u^0)}{\sqrt{\varepsilon}}-\partial_rg(s,y,u^0(s,y))Y\Big]I_{\{s\leq \tau\}}dsdy\\
&& +\int^{t_2}_0\int^1_0\partial_y(G_{t_1-s}(x,y)-G_{t_2-s}(x,y))\Big[\frac{g(u^{\varepsilon})-g(u^0)}{\sqrt{\varepsilon}}-\partial_rg(s,y,u^0(s,y))Y\Big]I_{\{s\leq \tau\}}dsdy.
\end{eqnarray*}
By (H3), for $\theta\in (u^0(y,s), u^{\varepsilon}(y,s))$, we get
\begin{eqnarray*}
&&\frac{g(u^{\varepsilon}(y,s))-g(u^0(y,s))}{\sqrt{\varepsilon}}-\partial_rg(s,y,u^0(s,y))Y\\
&=& \frac{\partial_rg(s,y,u^0(s,y))(u^{\varepsilon}-u^0)+\frac{1}{2}\partial^2_rg(\theta)(u^{\varepsilon}-u^0)^2}{\sqrt{\varepsilon}}-\partial_rg(s,y,u^0(s,y))Y\\
&=& \partial_rg(s,y,u^0(s,y))Z^{\varepsilon}+\frac{1}{2}\sqrt{\varepsilon}\partial^2_rg(\theta)|Y^{\varepsilon}|^2\\
&\leq& K(1+|u^0|)|Z^{\varepsilon}|+\frac{1}{2}K\sqrt{\varepsilon}|Y^{\varepsilon}|^2.
\end{eqnarray*}
Then, it follows that
\begin{eqnarray}\notag
&&|\bar{I}^{\varepsilon}_2(t_1,x)-\bar{I}^{\varepsilon}_2(t_2,x)|\\ \notag
&\leq& K\sqrt{\varepsilon}\int^{t_1}_{t_2}\int^1_0\partial_yG_{t_1-s}(x,y)(1+|u^0|)\Big|\frac{Z^{\varepsilon}(s)}{\sqrt{\varepsilon}}\Big|I_{\{s\leq \tau\}}dsdy\\ \notag
&& +\frac{1}{2}K\sqrt{\varepsilon}\int^{t_1}_{t_2}\int^1_0\partial_yG_{t_1-s}(x,y)|Y^{\varepsilon}|^2I_{\{s\leq \tau\}}dsdy\\ \notag
&& +K\sqrt{\varepsilon}\int^{t_2}_0\int^1_0\partial_y(G_{t_1-s}(x,y)-G_{t_2-s}(x,y))(1+|u^0|)\Big|\frac{Z^{\varepsilon}(s)}{\sqrt{\varepsilon}}\Big|I_{\{s\leq \tau\}}dsdy\\ \notag
&& +\frac{1}{2}K\sqrt{\varepsilon}\int^{t_2}_0\int^1_0\partial_y(G_{t_1-s}(x,y)-G_{t_2-s}(x,y))|Y^{\varepsilon}|^2I_{\{s\leq \tau\}}dsdy\\
\label{eq-13}
&:=& \sqrt{\varepsilon}(\bar{I}^{\varepsilon}_{2,1}+\bar{I}^{\varepsilon}_{2,2}+\bar{I}^{\varepsilon}_{2,3}+\bar{I}^{\varepsilon}_{2,4}).
\end{eqnarray}
In the rest part, we take $p>14$.
By H\"{o}lder inequality and Lemma \ref{lem-4}, for some $0<\delta_1<1$, we deduce that
\begin{eqnarray*}
 &&E\|\bar{I}^{\varepsilon}_{2,1}\|^p_H\\
 &=& E\Big[\int^1_0|\int^{t_1}_{t_2}\int^1_0\partial_yG_{t_1-s}(x,y)(1+|u^0|)\Big|\frac{Z^{\varepsilon}(s)}{\sqrt{\varepsilon}}\Big|I_{\{s\leq \tau\}}dyds|^2dx\Big]^{\frac{p}{2}}\\
 &\leq& E\Big[\int^1_0\Big(\int^{t_1}_{t_2}\int^1_0|\partial_yG_{t_1-s}(x,y)|^{2\delta_1}(1+|u^0|)^2 dyds\Big)\Big(\int^{t_1}_{t_2}\int^1_0|\partial_yG_{t_1-s}(x,y)|^{2(1-\delta_1)}\Big|\frac{Z^{\varepsilon}(s)}{\sqrt{\varepsilon}}\Big|^2I_{\{s\leq \tau\}}dyds\Big)dx\Big]^{\frac{p}{2}}\\
 &\leq& \Big(\int^{t_1}_{t_2}(t_1-s)^{-2\delta_1}(1+\|u^0\|^2_H)ds\Big)^{\frac{p}{2}}E\Big[\int^1_0\Big(\int^{t_1\wedge \tau}_{t_2}\int^1_0|\partial_yG_{t_1-s}(x,y)|^{2(1-\delta_1) q}dyds\Big)^{\frac{p}{2q}}\\
&& \times \Big(\int^1_0\int^{t_1}_{t_2}\int^1_0\Big|\frac{Z^{\varepsilon}(s\wedge \tau)}{\sqrt{\varepsilon}}\Big|^pdydsdx\Big)dx\Big]^{\frac{p}{2}}\\
&\leq& K^p(1+C_0)^{\frac{p}{2}}\Big(\int^{t_1}_{t_2}(t_1-s)^{-2\delta_1}ds\Big)^{\frac{p}{2}}E\Big[\int^1_0\Big(\int^{t_1 }_{t_2}\int^1_0|\partial_yG_{t_1-s}(x,y)|^{2(1-\delta_1) q}dyds\Big)^{\frac{p}{2q}}\\
&& \times \Big(\int^1_0\int^{t_1}_{t_2}\int^1_0\Big|\frac{Z^{\varepsilon}(s\wedge \tau)}{\sqrt{\varepsilon}}\Big|^pdydsdx\Big)^{\frac{2}{p}}dx\Big]^{\frac{p}{2}}.
\end{eqnarray*}
where $\frac{2}{p}+\frac{1}{q}=1$.

Taking $\delta_1\in ( \frac{5}{14}, \frac{1}{2})$, as $p>14$, we have
\[
-2\delta_1>-1,\ 2(1-\delta_1) q<\frac{3}{2}.
\]
Set
\begin{eqnarray*}
  b_1=\frac{1}{2}-2(1-\delta_1) q,
\end{eqnarray*}
then by Lemma \ref{lem-4}, we deduce that
\begin{eqnarray*}
 E\|\bar{I}^{\varepsilon}_{2,1}\|^p_H&\leq& K^p(1+C_0)^{\frac{p}{2}}\Big(\int^{t_1}_{t_2}(t_1-s)^{-2\delta_1}ds\Big)^{\frac{p}{2}}(t_1-t_2)^{(b_1+1)\frac{p}{2q}}\\
 && \times \int^{t_1}_{t_2}E\int^1_0\Big|\frac{Z^{\varepsilon}(s\wedge \tau)}{\sqrt{\varepsilon}}\Big|^pdyds\\
&\leq&C_2K^p(1+C_0)^{\frac{p}{2}}(t_1-t_2)^{\frac{(-2\delta_1+1)p}{2}}(t_1-t_2)^{(b_1+1)\frac{p}{2q}}(t_1-t_2)\\
&\leq&C_2K^p(1+C_0)^{\frac{p}{2}}|t_1-t_2|^{\alpha_1},
\end{eqnarray*}
where
\[
\alpha_1=\frac{(-2\delta_1+1)p}{2}+(b_1+1)\frac{p}{2q}+1=\frac{p-2}{4}.
\]
Thus,
\begin{eqnarray}\label{eee-14}
E\|\bar{I}^{\varepsilon}_{2,1}\|^p_H\leq C(K,p, C_0,C_2)|t_1-t_2|^{\frac{p-2}{4}}.
\end{eqnarray}

Utilizing H\"{o}lder inequality and Corollary \ref{cor-1}, we get
\begin{eqnarray*}
 E\|\bar{I}^{\varepsilon}_{2,2}\|^p_H&\leq&
CK^pE\Big[\int^1_0|\int^{t_1}_{t_2}\int^1_0\partial_yG_{t_1-s}(x,y)|Y^{\varepsilon}(s\wedge \tau)|^2dsdy|^2dx\Big]^{\frac{p}{2}}\\
&\leq& CK^p E\Big[\int^1_0\Big(\int^{t_1}_{t_2}\int^1_0|\partial_yG_{t_1-s}(x,y)|^{r}dyds\Big)^{\frac{2}{r}}\Big(\int^{t_1}_{t_2}\int^1_0|Y^{\varepsilon}(s\wedge \tau)|^pdyds\Big)^{\frac{4}{p}}dx\Big]^{\frac{p}{2}}\\
&\leq& CK^p |t_1-t_2|^{\frac{(3-2r)p}{2r} } E\Big[\int^1_0\Big(\int^{t_1}_{t_2}\int^1_0|Y^{\varepsilon}(s\wedge \tau, y)|^{2p}dyds\Big)^{\frac{2}{p}}\Big(\int^{t_1}_{t_2}\int^1_0dyds\Big)^{\frac{2}{p}}dx\Big]^{\frac{p}{2}}\\
&\leq& CK^p |t_1-t_2|^{\frac{(3-2r)p}{2r}+1 } E\Big(\int^{t_1}_{t_2}\int^1_0|Y^{\varepsilon}(s\wedge \tau, y)|^{2p}dyds\Big),
\end{eqnarray*}
 where $\frac{2}{p}+\frac{1}{r}=1$.

As $p>14$, by (\ref{eq-36}), we get
\begin{eqnarray}\label{eee-15}
 E\|\bar{I}^{\varepsilon}_{2,2}\|^p_H\leq  CK^p|t_1-t_2|^{\frac{(3-2r)p}{2r}+2}C_1= C(p,K,C_1)|t_1-t_2|^{\frac{p-2}{2}}.
\end{eqnarray}
By the definition of heat kernel $G$, for $t_1>t_2>s$, we deduce that
\begin{eqnarray*}
&&\partial_y(G_{t_1-s}(x,y)-G_{t_2-s}(x,y))\\
&=&\frac{1}{\sqrt{2\pi}}\partial_y\Big[\frac{1}{\sqrt{t_1-s}}e^{-\frac{(x-y)^2}{2(t_1-s)}}-\frac{1}{\sqrt{t_2-s}}e^{-\frac{(x-y)^2}{2(t_2-s)}}\Big]\\
&=&\frac{1}{\sqrt{2\pi}}\Big[\frac{1}{\sqrt{t_1-s}}\frac{(x-y)}{t_1-s}e^{-\frac{(x-y)^2}{2(t_1-s)}}-\frac{1}{\sqrt{t_2-s}}\frac{(x-y)}{t_2-s}e^{-\frac{(x-y)^2}{2(t_2-s)}}\Big]\\
&=& \frac{1}{\sqrt{t_1-s}}\tilde{G}_{t_1-s}(x,y)-\frac{1}{\sqrt{t_2-s}}\tilde{G}_{t_2-s}(x,y)\\
&=& \Big(\frac{1}{\sqrt{t_1-s}}-\frac{1}{\sqrt{t_2-s}}\Big)\tilde{G}_{t_2-s}(x,y)+\frac{1}{\sqrt{t_1-s}}\Big(\tilde{G}_{t_1-s}(x,y)-\tilde{G}_{t_2-s}(x,y)\Big)\\
&\leq& \frac{t_1-t_2}{(t_1-s)\sqrt{t_2-s}}\tilde{G}_{t_2-s}(x,y)+\frac{1}{\sqrt{t_1-s}}\Big(\tilde{G}_{t_1-s}(x,y)-\tilde{G}_{t_2-s}(x,y)\Big).
\end{eqnarray*}
Define
\[
\tilde{G}_{t}(x,y):=\frac{1}{\sqrt{2\pi}}\frac{(x-y)}{t}e^{-\frac{(x-y)^2}{2t}},
\]
with the help of properties of Gamma function, we establish that $\tilde{G}^r_{t}(x,y)$ satisfies (\ref{eqq-4})-(\ref{eq-29}). Then, it follows that
\begin{eqnarray*}
&&E\|\bar{I}^{\varepsilon}_{2,3}\|^p_H\\
&\leq&E\Big[\int^1_0\Big|K\int^{t_2}_0\int^1_0\partial_y(G_{t_1-s}(x,y)-G_{t_2-s}(x,y))(1+|u^0|)|\frac{Z^{\varepsilon}(s)}{\sqrt{\varepsilon}}|I_{\{s\leq \tau\}}dsdy\Big|^2dx\Big]^{\frac{p}{2}}\\
&\leq& C(p) E\Big[\int^1_0\Big|K\int^{t_2}_0\int^1_0\frac{t_1-t_2}{(t_1-s)\sqrt{t_2-s}}\tilde{G}_{t_2-s}(x,y)(1+|u^0|)|\frac{Z^{\varepsilon}(s)}{\sqrt{\varepsilon}}|I_{\{s\leq \tau\}}dsdy\Big|^2dx\Big]^{\frac{p}{2}}\\
&& +C(p)E\Big[\int^1_0\Big|K\int^{t_2}_0\int^1_0\frac{1}{\sqrt{t_1-s}}(\tilde{G}_{t_1-s}(x,y)-\tilde{G}_{t_2-s}(x,y))(1+|u^0|)|\frac{Z^{\varepsilon}(s)}{\sqrt{\varepsilon}}|I_{\{s\leq \tau\}}dsdy\Big|^2dx\Big]^{\frac{p}{2}}\\
&\leq& \varepsilon^{-\frac{p}{2}} C(p) E\Big[\int^1_0\Big|K\int^{t_2}_0\int^1_0\frac{t_1-t_2}{(t_1-s)\sqrt{t_2-s}}\tilde{G}_{t_2-s}(x,y)(1+|u^0|)|Z^{\varepsilon}(s\wedge \tau)|dsdy\Big|^2dx\Big]^{\frac{p}{2}}\\
&& +\varepsilon^{-\frac{p}{2}}C(p)E\Big[\int^1_0\Big|K\int^{t_2}_0\int^1_0\frac{1}{\sqrt{t_1-s}}(\tilde{G}_{t_1-s}(x,y)-\tilde{G}_{t_2-s}(x,y))(1+|u^0|)|Z^{\varepsilon}(s\wedge \tau)|dsdy\Big|^2dx\Big]^{\frac{p}{2}}\\
&=:& \varepsilon^{-\frac{p}{2}}C(p)(K_1+K_2).
\end{eqnarray*}
By (\ref{eqq-5}), H\"{o}lder inequality and Lemma \ref{lem-5}, for $\alpha_0>0$, we have
\begin{eqnarray*}
K_1&\leq& K^p E\Big[\int^1_0\Big|\int^{t_2}_0\frac{t_1-t_2}{(t_1-s)\sqrt{t_2-s}}\Big(\int^1_0\tilde{G}^{2+{\alpha_0}}_{t_2-s}
(x,y)dy\Big)^{\frac{1}{2+{\alpha_0}}}(1+\|u^0\|_H)\Big(\int^1_0|Z^{\varepsilon}(s\wedge \tau)|^{p_1}dy \Big)^{\frac{1}{p_1}} ds\Big|^2dx\Big]^{\frac{p}{2}}\\
&\leq& K^p(1+\sqrt{C_0})^pE\Big[\int^1_0\Big|\int^{t_2}_0\frac{t_1-t_2}{(t_1-s)\sqrt{t_2-s}}(t_2-s)^{-\frac{1+{\alpha_0}}{2(2+{\alpha_0})}}\|Z^{\varepsilon}(s\wedge \tau)\|_{L^{p_1}}ds\Big|^2dx\Big]^{\frac{p}{2}}\\
&=&K^p(1+\sqrt{C_0})^pE\Big|\int^{t_2}_0\frac{t_1-t_2}{(t_1-s)\sqrt{t_2-s}}(t_2-s)^{-\frac{1+{\alpha_0}}{2(2+{\alpha_0})}}\|Z^{\varepsilon}(s\wedge \tau)\|_{L^{p_1}}ds\Big|^p\\
&=& K^p(1+\sqrt{C_0})^p\int^{t_2}_0\cdot\cdot\cdot\int^{t_2}_0\prod^p_{k=1}\frac{t_1-t_2}{(t_1-s_k)\sqrt{t_2-s_k}}(t_2-s_k)^{-\frac{1+{\alpha_0}}{2(2+{\alpha_0})}}
E\prod^p_{k=1}\|Z^{\varepsilon}(s_k\wedge \tau)\|_{L^{p_1}}ds_1\cdot\cdot\cdot ds_p,
\end{eqnarray*}
where
\[
 \frac{1}{2+{\alpha_0}}+\frac{1}{p_1}=\frac{1}{2}.
\]
By Cauchy-Schwarz inequality with $r_1,\cdot\cdot\cdot, r_p$  satisfying $\sum^p_{k=1}\frac{1}{ r_k}=1$, we deduce that
\begin{eqnarray*}
E\prod^p_{k=1}\|Z^{\varepsilon}(s_k\wedge \tau)\|_{L^{p_1}}
\leq (E\|Z^{\varepsilon}(s_1\wedge \tau)\|^{r_1}_{L^{p_1}})^{\frac{1}{ r_1}}\cdot\cdot\cdot(E\|Z^{\varepsilon}(s_p\wedge \tau)\|^{r_p}_{L^{p_1}})^{\frac{1}{ r_p}}.
\end{eqnarray*}
Let $\tilde{r}_k=r_k\vee p_1, k=1,\cdot\cdot\cdot,p$, by H\"{o}lder inequality, we get
\[
(E\|Z^{\varepsilon}(s_k\wedge \tau)\|^{r_k}_{L^{p_1}})^{\frac{1}{ r_k}}\leq (E\|Z^{\varepsilon}(s_k\wedge \tau)\|^{r_k}_{L^{\tilde{r}_k}})^{\frac{1}{ r_k}}\leq (E\|Z^{\varepsilon}(s_k\wedge \tau)\|^{\tilde{r}_k}_{L^{\tilde{r}_k}})^{\frac{1}{ \tilde{r}_k}},
\]
thus,
\begin{eqnarray*}
E\prod^p_{k=1}\|Z^{\varepsilon}(s_k\wedge \tau)\|_{L^{p_1}}
\leq (E\|Z^{\varepsilon}(s_1\wedge \tau)\|^{\tilde{r}_1}_{L^{\tilde{r}_1}})^{\frac{1}{ \tilde{r}_1}}\cdot\cdot\cdot(E\|Z^{\varepsilon}(s_p\wedge \tau)\|^{\tilde{r}_p}_{L^{\tilde{r}_p}})^{\frac{1}{ \tilde{r}_p}}.
\end{eqnarray*}
Choosing $\alpha_0=1$, then $p_1=6$. Taking $r_k= p>14$, for $k=1,2,\cdot\cdot\cdot,p$.
With the aid of Lemma \ref{lem-4} with $\tilde{r}_k= p$, it yields
\begin{eqnarray*}
\Big(E\|Z^{\varepsilon}(s_k\wedge \tau)\|^{\tilde{r}_k}_{L^{\tilde{r}_k}}\Big)^{\frac{1}{\tilde{r}_k}}\leq \varepsilon^{\frac{1}{2}} C^{\frac{1}{\tilde{r}_k}}_2,
\end{eqnarray*}
which implies
\begin{eqnarray*}
E\prod^p_{k=1}\|Z^{\varepsilon}(s_k\wedge \tau)\|_{L^{p_1}}
\leq \varepsilon^{\frac{p}{2}}C_2.
\end{eqnarray*}
Hence,
\begin{eqnarray*}
K_1&\leq& \varepsilon^{\frac{p}{2}} K^p(1+\sqrt{C_0})^pC(C_2)\Big(\int^{t_2}_0\frac{t_1-t_2}{(t_1-s)\sqrt{t_2-s}}(t_2-s)^{-\frac{1+{\alpha_0}}{2(2+{\alpha_0})}}ds \Big)^p \\
&\leq& \varepsilon^{\frac{p}{2}}C(K,p,C_0,C_2)|t_1-t_2|^{\frac{p}{2(2+{\alpha_0})}}\\
&\leq& \varepsilon^{\frac{p}{2}}C(K,p,C_0,C_2)|t_1-t_2|^{\frac{p}{6}}.
\end{eqnarray*}
Indeed, let $u=t_1-s, v=\frac{u}{t_1-t_2}$, it follows that
\begin{eqnarray}\notag
&&\int^{t_2}_0\frac{t_1-t_2}{(t_1-s)\sqrt{t_2-s}}(t_2-s)^{-\frac{1+{\alpha_0}}{2(2+{\alpha_0})}}ds \\ \notag
&=& \int^{t_1-t_2}_{t_1}\frac{t_1-t_2}{u}(u-t_1+t_2)^{-\frac{1}{2}-\frac{1+{\alpha_0}}{2(2+{\alpha_0})}}du\\ \notag
&=& \int^{\frac{t_1}{t_1-t_2}}_{1}\frac{1}{v}(v-1)^{-\frac{1}{2}-\frac{1+{\alpha_0}}{2(2+{\alpha_0})}}(t_1-t_2)^{\frac{1}{2}-\frac{1+{\alpha_0}}{2(2+{\alpha_0})}}dv\\
\notag
&\leq& |t_1-t_2|^{\frac{1}{2}-\frac{1+{\alpha_0}}{2(2+{\alpha_0})}}\int^{\infty}_1\frac{1}{v}(v-1)^{-\frac{1}{2}-\frac{1+{\alpha_0}}{2(2+{\alpha_0})}}dv\\
\label{eee-16}
&\leq& C|t_1-t_2|^{\frac{1}{2(2+{\alpha_0})}}.
\end{eqnarray}
By H\"{o}lder inequality, (\ref{eqq-5-1}),  Lemma \ref{lem-4}, for $p>14$ and for some $0<\alpha_1<1, 0<\kappa<1$, we get
\begin{eqnarray*}
K_2&\leq&E\Big[\int^1_0\Big|K\int^{t_2}_0\int^1_0\frac{1}{\sqrt{t_1-s}}(\tilde{G}_{t_1-s}(x,y)-\tilde{G}_{t_2-s}(x,y))(1+|u^0|)|Z^{\varepsilon}(s\wedge\tau)|dsdy\Big|^2dx\Big]^{\frac{p}{2}}\\
&\leq&K^p(1+\sqrt{C_0})^pE\Big[\int^1_0\Big|\int^{t_2}_0\frac{1}{\sqrt{t_1-s}}\Big(\int^1_0(\tilde{G}_{t_1-s}(x,y)-\tilde{G}_{t_2-s}(x,y))^{3-{\alpha_1}}dy\Big)^{\frac{1}{3-{\alpha_1}}}
\|Z^{\varepsilon}(s\wedge\tau)\|_{L^{p}}ds\Big|^2dx\Big]^{\frac{p}{2}}\\
&\leq&K^p(1+\sqrt{C_0})^p E\Big[\int^1_0\big(\int^{t_2}_0(t_1-s)^{-\kappa}ds\big)\Big(\int^{t_2}_0
\int^1_0(\tilde{G}_{t_1-s}(x,y)-\tilde{G}_{t_2-s}(x,y))^{3-{\alpha_1}}dyds\Big)^{\frac{2}{3-{\alpha_1}}}
\\
&& \times\Big(\int^{t_2}_0(t_1-s)^{-\frac{1}{2}(1-\kappa)p}\|Z^{\varepsilon}(s\wedge\tau)\|^p_{L^{p}}ds\Big)^{\frac{2}{p}}dx\Big]^{\frac{p}{2}}\\
&\leq& K^p(1+\sqrt{C_0})^p \Big(\int^{t_2}_0(t_1-s)^{-\kappa}ds\Big)^{\frac{p}{2}}\sup_{x\in[0,1]}\Big(\int^{t_2}_0
\int^1_0(\tilde{G}_{t_1-s}(x,y)-\tilde{G}_{t_2-s}(x,y))^{3-{\alpha_1}}dyds\Big)^{\frac{p}{3-{\alpha_1}}}
\\
&& \times \Big(\int^{t_2}_0(t_1-s)^{-\frac{1}{2}(1-\kappa)p}E\|Z^{\varepsilon}(s\wedge\tau)\|^p_{L^{p}}ds\Big)\\
&\leq& \varepsilon^{\frac{p}{2}}K^p(1+\sqrt{C_0})^p C_2 \Big(\int^{t_2}_0(t_1-s)^{-\kappa}ds\Big)^{\frac{p}{2}}\sup_{x\in[0,1]}\Big(\int^{t_2}_0
\int^1_0(\tilde{G}_{t_1-s}(y,s)-\tilde{G}_{t_2-s}(y,s))^{3-{\alpha_1}}dyds\Big)^{\frac{p}{3-{\alpha_1}}}
\\
&& \times\Big(\int^{t_2}_0(t_1-s)^{-\frac{1}{2}(1-\kappa)p}ds\Big),
\end{eqnarray*}
where $\frac{1}{3-\alpha_1}+\frac{1}{p}=\frac{1}{2}$.

Define $c_0:=-\frac{1}{2}(1-\kappa)p+1$, then
\begin{eqnarray*}
K_2
&\leq&\varepsilon^{\frac{p}{2}}K^p(1+\sqrt{C_0})^pC_2(|t_1-t_2|^{-\kappa+1}-t^{-\kappa+1}_1)^{\frac{p}{2}}|t_1-t_2|^{\frac{{\alpha_1} p}{2(3-{\alpha_1})}}[t^{c_0}_1-|t_1-t_2|^{c_0}]\\
&\leq& \varepsilon^{\frac{p}{2}}C(K,p,C_0, T,C_2)|t_1-t_2|^{\frac{{\alpha_1} p}{2(3-{\alpha_1})}},
\end{eqnarray*}
which implies that
\begin{eqnarray}\label{eee-17}
K_2\leq \varepsilon^{\frac{p}{2}}C(K,p,C_0, T,C_2)|t_1-t_2|^{\frac{p-6}{4}}.
\end{eqnarray}
Combing (\ref{eee-16}) and (\ref{eee-17}), it yields
\begin{eqnarray}\label{eee-18}
E\|\bar{I}^{\varepsilon}_{2,3}\|^p_H\leq  C(K,p,C_0,C_2)|t_1-t_2|^{\frac{p}{6}}+C(K,p,C_0, T,C_2)|t_1-t_2|^{\frac{p-6}{4}}.
\end{eqnarray}
For any $p>14$ and $\frac{1}{r}+\frac{2}{p}=1$, we have $r\in (1,\frac{7}{6})$. Utilizing (\ref{eq-29}) and (\ref{eq-36}), we have
\begin{eqnarray*}
E\|\bar{I}^{\varepsilon}_{2,4}\|^p_H&\leq&CK^pE\Big[\int^1_0|\int^{t_2}_0\int^1_0\partial_y(G_{t_1-s}(x,y)-G_{t_2-s}(x,y))|Y^{\varepsilon}|^2I_{\{s\leq \tau\}}dsdy|^2dx\Big]^{\frac{p}{2}}\\
&\leq& CK^pE\Big[\int^1_0\Big(\int^{t_2}_0\int^1_0|\partial_y(G_{t_1-s}(x,y)-G_{t_2-s}(x,y))|^rdyds\Big)^{\frac{2}{r}}\Big(\int^{t_2}_0\int^1_0|Y^{\varepsilon}(s\wedge \tau)|^{p}dsdy\Big)^{\frac{4}{p}}dx\Big]^{\frac{p}{2}}\\
&\leq& CK^p|t_1-t_2|^{(\frac{3}{2}-r)\frac{p}{r}}E\Big[\int^1_0\Big(\int^{t_2}_0\int^1_0|Y^{\varepsilon}(s\wedge \tau,y)|^{2p}dyds\Big)^{\frac{2}{p}}\Big(\int^{t_2}_0\int^1_0dyds\Big)^{\frac{2}{p}}\Big]^{\frac{p}{2}}\\
&\leq& C(T)K^p|t_1-t_2|^{(\frac{3}{2}-r)\frac{p}{r}}E\int^{t_2}_0\int^1_0|Y^{\varepsilon}(s\wedge \tau,y)|^{2p}dyds\\
&\leq& C(T,K,p,C_1)|t_1-t_2|^{(\frac{3}{2}-r)\frac{p}{r}},
\end{eqnarray*}
which implies
\begin{eqnarray}\label{eee-19}
E\|\bar{I}^{\varepsilon}_{2,4}\|^p_H\leq C(T,K,p,C_1)|t_1-t_2|^{\frac{p-6}{2}}.
\end{eqnarray}

Combing  (\ref{eee-14}),  (\ref{eee-15}),  (\ref{eee-18}) and (\ref{eee-19}), we conclude that
\begin{eqnarray*}
E\|\bar{I}^{\varepsilon}_2(t_1)-\bar{I}^{\varepsilon}_2(t_2)\|^p_H&\leq& \varepsilon^{\frac{p}{2}}C_p( E\|\bar{I}^{\varepsilon}_{2,1}\|^p_H+E\|\bar{I}^{\varepsilon}_{2,2}\|^p_H+E\|\bar{I}^{\varepsilon}_{2,3}\|^p_H+E\|\bar{I}^{\varepsilon}_{2,4}\|^p_H)\\
&\leq& \varepsilon^{\frac{p}{2}}C_p\Big[C(K,p, C_0,C_2)|t_1-t_2|^{\frac{p-2}{4}}+ C(p,K,C_1)|t_1-t_2|^{\frac{p-2}{2}}\\
&& +C(K,p,C_0,T,C_2)|t_1-t_2|^{\frac{p}{6}}+C(K,p,C_0,T,C_2)|t_1-t_2|^{\frac{p-6}{4}}\\
&&+C(T,K,p,C_1)|t_1-t_2|^{\frac{p-6}{2}}\Big]\\
&\leq& \varepsilon^{\frac{p}{2}}C(K,p, T,C_0,C_1,C_2)(|t_1-t_2|^{\frac{p}{6}}+|t_1-t_2|^{\frac{p-6}{4}}).
\end{eqnarray*}

As $p>14$, we get $\frac{p}{6}>\frac{p-6}{4}>2$, then
\begin{eqnarray}\label{eq-39}
E\|\bar{I}^{\varepsilon}_2(t_1)-\bar{I}^{\varepsilon}_2(t_2)\|^{p}_H\leq \varepsilon^{\frac{p}{2}}C(K,p, T,C_0,C_1,C_2)|t_1-t_2|^{\frac{p-6}{4}}.
\end{eqnarray}
Let
\[
\Psi(r)=r^p,\quad p(r)=r^{\frac{p-6}{4p}},
\]
and
\[
\rho=\int^T_0\int^T_0\Big|\frac{\|\bar{I}^{\varepsilon}_2(t_1)-\bar{I}^{\varepsilon}_2(t_2)\|_H}{|t_1-t_2|^{\frac{p-6}{4p}}}\Big|^pdt_1dt_2.
\]
By Lemma \ref{lem-3}, for any $s,t\in [0,T]$,we have
\begin{eqnarray*}
\|\bar{I}^{\varepsilon}_2(t_1)-\bar{I}^{\varepsilon}_2(t_2)\|_H&\leq& 8\int^{|t_1-t_2|}_0(\rho r^{-2})^{\frac{1}{p}}dr^{\frac{p-6}{4p}}\\
&\leq& C(p)\rho^{\frac{1}{p}}\int^{|t_1-t_2|}_0r^{-\frac{2}{p}+\frac{p-6}{4p}-1}dr.
\end{eqnarray*}
As $\frac{p-6}{4}>2$, we have $-\frac{2}{p}+\frac{p-6}{4p}>0$, then
\begin{eqnarray*}
\|\bar{I}^{\varepsilon}_2(t_1)-\bar{I}^{\varepsilon}_2(t_2)\|_H\leq
C(p)\rho^{\frac{1}{p}}|t_1-t_2|^{-\frac{2}{p}+\frac{p-6}{4p}}.
\end{eqnarray*}
Taking $t_2=0, 0\leq t=t_1\leq T$, we get
\begin{eqnarray*}
\|\bar{I}^{\varepsilon}_2(t)\|_H\leq
C(T,p)\rho^{\frac{1}{p}}.
\end{eqnarray*}
By (\ref{eq-39}), we get
 \begin{eqnarray}\label{eq-14-1}
E\sup_{0\leq t\leq T}\|\bar{I}^{\varepsilon}_2(t)\|_H\leq C(T,p)(E\rho)^{\frac{1}{p}}\leq \sqrt{\varepsilon}C(K,p, T,C_0,C_1,C_2).
\end{eqnarray}
Thus, we deduce from (\ref{eq-30}) that
 \begin{eqnarray}\notag
E\sup_{0\leq t\leq T}\|{I}^{\varepsilon}_2(t)\|_H&=&E\sup_{0\leq t\leq T}\|\bar{I}^{\varepsilon}_2(t\wedge \tau)\|_H\\
\label{eq-14}
&\leq & \sqrt{\varepsilon}C(K,p, T,C_0,C_1,C_2)\rightarrow 0, \ {\rm{as}}\ \varepsilon\rightarrow 0.
\end{eqnarray}

By the same argument as the method dealing with $I^{\varepsilon}_2(t)$, we get
\begin{eqnarray}\label{eee-24}
E\sup_{t\in [0,T]}\|I^{\varepsilon}_1(t)\|_H\rightarrow 0, \ {\rm{as}} \ \varepsilon\rightarrow 0.
\end{eqnarray}

Combing (\ref{eq-14}) and (\ref{eee-24}),  we know that $I^{\varepsilon}_1(t)+I^{\varepsilon}_2(t)\rightarrow 0$ in probability in $C([0, T]; H)$ as $\varepsilon\rightarrow 0$. By (\ref{eq-27}),
we complete the proof.

\section{MDP for semilinear SPDE}\label{sec-3}
In this part, we are concerned with the moderate deviation principle of the solution $u^\varepsilon$ of (\ref{eqq-9}). As stated in the introduction, we need to
prove $\frac{u^\varepsilon-u^0}{\sqrt{\varepsilon}\lambda(\varepsilon)}$ satisfies a large deviation principle on $C([0,T]; H)$ with $\lambda(\varepsilon)$ satisfying (\ref{e-43}).
From now on, we assume (\ref{e-43}) holds.
\subsection{The weak convergence approach}
Let $X^\varepsilon =\frac{u^\varepsilon-u^0}{\sqrt{\varepsilon}\lambda(\varepsilon)}$, we will use the weak convergence approach introduced by Budhiraja and Dupuis in \cite{BD} to verify $X^\varepsilon$ satisfies a large deviation principle. Firstly, we recall some standard definitions and results from the large deviation theory (see \cite{DZ}).

Let $\{X^\varepsilon\}$ be a family of random variables defined on a probability space $(\Omega, \mathcal{F}, P)$ taking values in some Polish space $\mathcal{E}$.

\begin{dfn}
(Rate function) A function $I: \mathcal{E}\rightarrow [0,\infty]$ is called a rate function if $I$ is lower semicontinuous. A rate function $I$ is called a good rate function if the level set $\{x\in \mathcal{E}: I(x)\leq M\}$ is compact for each $M<\infty$.
\end{dfn}
\begin{dfn}
 (LDP) The sequence $\{X^{\varepsilon}\}$ is said to satisfy the large deviation principle with rate function $I$ if for each Borel subset $A$ of $\mathcal{E}$
      \[
      -\inf_{x\in A^o}I(x)\leq \lim \inf_{\varepsilon\rightarrow 0}\varepsilon \log P(X^{\varepsilon}\in A)\leq \lim \sup_{\varepsilon\rightarrow 0}\varepsilon \log P(X^{\varepsilon}\in A)\leq -\inf_{x\in \bar{A}}I(x).
      \]
\end{dfn}

Now we define
\begin{eqnarray*}
\mathcal{A}&=&\Big\{\phi: \int^T_0\int^1_0 |\phi(s,y)|^2dyds<\infty\quad  P\text{-}a.s.\Big\};\\
T_M&=&\Big\{ h\in L^2([0,T]\times [0,1]): \int^T_0\int^1_0 |h(s,y)|^2dyds\leq M\Big\};\\
\mathcal{A}_M&=&\Big\{\phi\in \mathcal{A}: \phi(\omega)\in T_M,\ P\text{-}a.s.\Big\}.
\end{eqnarray*}
Here and in the sequel of this paper, we will always refer to the weak topology on the set  $T_M$, in this case, $T_M$ is a compact metric space of $L^2([0,T]\times [0,1])$.

Suppose $\mathcal{G}^{\varepsilon}: C([0,T]\times [0,1]; \mathbb{R})\rightarrow \mathcal{E}$ is a measurable mapping and $X^{\varepsilon}=\mathcal{G}^{\varepsilon}(W)$. Now, we list below sufficient conditions for the large deviation principle of the sequence $X^{\varepsilon}$ as $\varepsilon\rightarrow 0$.
\begin{description}
  \item[\textbf{Hypothesis G} ] There exists a measurable map $\mathcal{G}^0: C([0,T]\times [0,1]; \mathbb{R})\rightarrow \mathcal{E}$ satisfying
\end{description}
\begin{description}
  \item[(i)] For every $M<\infty$, let $\{h^{\varepsilon}: \varepsilon>0\}$ $\subset \mathcal{A}_M$. If $h^{\varepsilon}$ converges to $h$ as $T_M-$valued random elements in distribution, then $\mathcal{G}^{\varepsilon}\Big(W(\cdot)+\lambda(\varepsilon)\int^{\cdot}_{0}\int^{\cdot}_{0}h^\varepsilon(s,y)dyds\Big)$ converges in distribution to $\mathcal{G}^0\Big(\int^{\cdot}_{0}\int^{\cdot}_{0}h(s,y)dyds\Big)$.
  \item[(ii)] For every $M<\infty$, $K_M=\Big\{\mathcal{G}^0\Big(\int^{\cdot}_{0}\int^{\cdot}_{0}h(s,y)dyds\Big): h\in T_M\Big\}$ is a compact subset of $\mathcal{E}$.
\end{description}
The following result is due to Budhiraja et al. in \cite{BD}.

\begin{thm}\label{thm-3}
If $\mathcal{G}^{0}$ satisfies Hypothesis G, then $X^{\varepsilon}$ satisfies a large deviation principle on $\mathcal{E}$ with the good rate function $I$ given by
\begin{eqnarray}\label{eq-5}
I(f)=\inf_{\Big\{h\in L^2([0,T]\times [0,1]): f= \mathcal{G}^0(\int^{\cdot}_{0}\int^{\cdot}_{0}h(s,y)dyds)\Big\}}\Big\{\frac{1}{2}\int^T_0\int^T_0|h(s,y)|^2dyds\Big\},\ \ \forall f\in\mathcal{E}.
\end{eqnarray}
By convention, $I(\emptyset)=\infty$.
\end{thm}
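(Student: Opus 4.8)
The plan is to obtain the large deviation principle from the equivalent Laplace principle, via the variational representation for functionals of the Brownian sheet (Bou\'{e}--Dupuis \cite{MP}, Budhiraja--Dupuis \cite{BD}). Since the functional $I$ in (\ref{eq-5}) will be a good rate function, it is enough to show that for every bounded continuous $G:\mathcal{E}\to\mathbb{R}$,
\[
\lim_{\varepsilon\to0}-\frac{1}{\lambda^2(\varepsilon)}\log E\Big[\exp\big(-\lambda^2(\varepsilon)G(X^\varepsilon)\big)\Big]=\inf_{f\in\mathcal{E}}\big\{G(f)+I(f)\big\}.
\]
The entry point is the representation: for any bounded Borel $F$ on $C([0,T]\times[0,1];\mathbb{R})$,
\begin{align*}
-\frac{1}{\lambda^2(\varepsilon)}\log E\Big[\exp\big(-\lambda^2(\varepsilon)F(W)\big)\Big]
&=\inf_{\phi\in\mathcal{A}}E\Big[\frac12\int^T_0\int^1_0|\phi(s,y)|^2\,dy\,ds\\
&\qquad+F\Big(W+\lambda(\varepsilon)\int^{\cdot}_0\int^{\cdot}_0\phi(s,y)\,dy\,ds\Big)\Big].
\end{align*}
Applying this with $F=G\circ\mathcal{G}^\varepsilon$ and using $X^\varepsilon=\mathcal{G}^\varepsilon(W)$ reduces the task to estimating, from above and below as $\varepsilon\to0$, the quantity $V^\varepsilon:=\inf_{\phi\in\mathcal{A}}E\big[\frac12\|\phi\|^2_{L^2}+G\big(\mathcal{G}^\varepsilon(W+\lambda(\varepsilon)\int^\cdot_0\int^\cdot_0\phi\,dy\,ds)\big)\big]$.

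For the upper bound $\limsup_{\varepsilon\to0}V^\varepsilon\le\inf_{f}\{G(f)+I(f)\}$, which yields the LDP lower bound, I would fix $f^\ast\in\mathcal{E}$ with $I(f^\ast)<\infty$ and $\eta>0$, choose a deterministic $h\in L^2([0,T]\times[0,1])$ with $f^\ast=\mathcal{G}^0(\int^\cdot_0\int^\cdot_0 h\,dy\,ds)$ and $\frac12\|h\|^2_{L^2}\le I(f^\ast)+\eta$, and then use the constant control $\phi\equiv h$, which lies in $\mathcal{A}_M$ with $M=2I(f^\ast)+2\eta$. Hypothesis~G(i), applied with $h^\varepsilon\equiv h$, gives $\mathcal{G}^\varepsilon(W+\lambda(\varepsilon)\int^\cdot_0\int^\cdot_0 h\,dy\,ds)\to f^\ast$ in distribution, so by boundedness and continuity of $G$ one gets $\limsup_{\varepsilon\to0}V^\varepsilon\le\frac12\|h\|^2_{L^2}+G(f^\ast)\le G(f^\ast)+I(f^\ast)+\eta$; letting $\eta\downarrow0$ and taking the infimum over $f^\ast$ (the case $I(f^\ast)=\infty$ being vacuous) completes this half.

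For the lower bound $\liminf_{\varepsilon\to0}V^\varepsilon\ge\inf_{f}\{G(f)+I(f)\}$, which yields the LDP upper bound, I would take near-optimal controls $\phi^\varepsilon\in\mathcal{A}$; since $G$ is bounded, controls of large $L^2$-energy are suboptimal, so one may assume $\phi^\varepsilon\in\mathcal{A}_M$ for a fixed $M<\infty$. As $T_M$ is weakly compact, along a subsequence $\phi^\varepsilon$ converges in distribution, viewed as a $T_M$-valued random element, to some $\phi$, and Hypothesis~G(i) then yields $\mathcal{G}^\varepsilon(W+\lambda(\varepsilon)\int^\cdot_0\int^\cdot_0\phi^\varepsilon\,dy\,ds)\to\mathcal{G}^0(\int^\cdot_0\int^\cdot_0\phi\,dy\,ds)$ in distribution. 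Using the Skorokhod representation theorem, the weak lower semicontinuity of $\phi\mapsto\|\phi\|^2_{L^2}$, Fatou's lemma and the continuity of $G$, I would conclude that $\liminf_{\varepsilon\to0}E[\frac12\|\phi^\varepsilon\|^2_{L^2}+G(\mathcal{G}^\varepsilon(W+\lambda(\varepsilon)\int^\cdot_0\int^\cdot_0\phi^\varepsilon\,dy\,ds))]\ge E[\frac12\|\phi\|^2_{L^2}+G(\mathcal{G}^0(\int^\cdot_0\int^\cdot_0\phi\,dy\,ds))]$, and the right-hand side is $\ge\inf_{f\in\mathcal{E}}\{G(f)+I(f)\}$ because $I(\mathcal{G}^0(\int^\cdot_0\int^\cdot_0\phi\,dy\,ds))\le\frac12\|\phi\|^2_{L^2}$ by the definition (\ref{eq-5}). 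The two bounds together establish the Laplace principle.

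The main obstacle is not Theorem~\ref{thm-3} itself --- the argument just sketched is the abstract Budhiraja--Dupuis scheme of \cite{BD}, whose only genuinely delicate point is the compactness/tightness passage in the lower bound --- but the \emph{verification} of Hypothesis~G for the skeleton map $\mathcal{G}^0$ attached to (\ref{eqq-9}), namely the weak-convergence statement (i) and the compactness statement (ii); this is where the delicate a priori estimates of Section~\ref{sec-4} and the Garsia-type control of $\sup_{t\in[0,T]}\|\cdot\|_H$ furnished by Lemma~\ref{lem-3} become essential. Finally, the goodness of $I$ --- lower semicontinuity together with compactness of each sublevel set --- follows from Hypothesis~G(ii): one checks that $\{f:I(f)\le M\}\subseteq K_{2M+2}$ and then invokes the weak compactness of $T_{2M+2}$ together with the continuity, for deterministic sequences, contained in Hypothesis~G(i).
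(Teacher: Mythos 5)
The paper does not prove Theorem \ref{thm-3} at all: it is quoted as an abstract criterion and attributed directly to Budhiraja and Dupuis \cite{BD}. Your sketch is precisely the standard weak-convergence argument behind that citation --- the Bou\'{e}--Dupuis variational representation rescaled to speed $\lambda^2(\varepsilon)$, the constant near-optimal control for the Laplace upper bound, and the truncation/tightness/Skorokhod/Fatou passage for the Laplace lower bound --- and, as a plan, it is correct; so in substance you are doing exactly what the paper delegates to \cite{BD}, while the paper's real work lies in verifying Hypothesis G for the specific map $\mathcal{G}^0$ (Theorems \ref{thm-5} and \ref{thm-4}), as you correctly point out. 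One small imprecision: your argument for the goodness of $I$ appeals to ``the continuity, for deterministic sequences, contained in Hypothesis G(i)'', but G(i) is a statement about $\mathcal{G}^{\varepsilon}$ along $\varepsilon\to 0$ and does not literally contain a continuity assertion for $\mathcal{G}^0$ on $T_M$; the clean way to get compact level sets from G(ii) alone is the identity $\{f: I(f)\leq M\}=\bigcap_{\delta>0}K_{2(M+\delta)}$, an intersection of compact sets, which also gives lower semicontinuity. With that adjustment (and with the routine justification that near-optimal controls may be taken in $\mathcal{A}_M$ because $G$ is bounded, which you indicate but would need to spell out via the usual truncation), your outline is a faithful and correct reconstruction of the cited result.
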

In this part, we are concerned with the following SPDE driven by small multiplicative noise
\begin{eqnarray}\notag
&&X^{\varepsilon}(x,t)\\
\notag
&=&\frac{1}{\lambda(\varepsilon)}\int^t_0\int^1_0G_{t-s}(x,y)\sigma(s,y, u^0(s,y)+\sqrt{\varepsilon}\lambda(\varepsilon)X^{\varepsilon}(s,y))W(dyds)\\ \notag
&& +\frac{1}{\sqrt{\varepsilon}\lambda(\varepsilon)}\int^t_0\int^1_0G_{t-s}(x,y)(b(s,y,u^0(s,y)+\sqrt{\varepsilon}\lambda(\varepsilon)X^{\varepsilon}(s,y))-b(s,y,u^0(s,y)))dyds\\
\label{eqq-16}
&& -\frac{1}{\sqrt{\varepsilon}\lambda(\varepsilon)}\int^t_0\int^1_0\partial_yG_{t-s}(x,y)(g(s,y,u^0(s,y)+\sqrt{\varepsilon}\lambda(\varepsilon)X^{\varepsilon}(s,y))-g(s,y,u^0(s,y)))dyds.
\end{eqnarray}
Under (H1) and (H2), combing Theorem \ref{thm-1} and Lemma \ref{lem-5}, there exists a unique strong solution in $X^\varepsilon\in C([0,T];H)$.
Therefore, there exists a Borel-measurable function
\begin{eqnarray}\label{e-46}
\mathcal{G}^{\varepsilon}: C([0,T]\times [0,1]; \mathbb{R})\rightarrow C([0,T];H)
\end{eqnarray}
such that $X^{\varepsilon}(\cdot)=\mathcal{G}^{\varepsilon}(W(\cdot))$.

Let $h\in T_M$, consider the following skeleton equation
\begin{eqnarray}\notag
X^h(x,t)&=&\int^t_0\int^1_0G_{t-s}(x,y)\partial_rb(s,y,u^0(s,y))X^h(s,y)dyds\\ \notag
&& -\int^t_0\int^1_0\partial_yG_{t-s}(x,y)\partial_rg(s,y,u^0(s,y))X^h(s,y)dyds\\
\label{eqq-17}
&& +\int^t_0\int^1_0G_{t-s}(x,y)\sigma(s,y,u^0(s,y))h(s,y)dyds.
\end{eqnarray}
By (H3), we know that all coefficients of (\ref{eqq-17}) are Lipschitz, it admits a unique solution $X^h$ satisfying
\begin{eqnarray}\label{eee-26}
\sup_{t\in [0,T]}\|X^h(t)\|^2_H\leq C(K,T,M,C_0).
\end{eqnarray}
Therefore, we can define a measurable mapping $\mathcal{G}^0: C([0,T]\times [0,1]; \mathbb{R})\rightarrow C([0,T];H)$ such that $\mathcal{G}^0\Big(\int^{\cdot}_0\int^{\cdot}_0 h(s,y)dyds\Big):=X^h(\cdot)$.

\smallskip

The main result in this part reads as
\begin{thm}\label{thm-8}
Let the initial value $f\in L^p([0,1])$ for all $p\in [2,\infty)$. Under (H1)-(H3), $X^{\varepsilon}$ satisfies a large deviation principle on $C([0,T];H)$ with the good rate function $I$ defined by (\ref{eq-5}).
\end{thm}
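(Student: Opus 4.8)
The plan is to verify \textbf{Hypothesis G} for the map $\mathcal{G}^0$ defined before Theorem \ref{thm-8} and then invoke Theorem \ref{thm-3}. Fix $M<\infty$ throughout.

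\emph{Condition (ii).} I will show that $h\mapsto X^h$ is continuous from $T_M$, equipped with its weak $L^2$-topology, into $C([0,T];H)$; since $T_M$ is weakly compact this forces $K_M$ to be compact. Let $h_n\rightharpoonup h$ in $T_M$. By (\ref{eqq-17}), $X^{h_n}-X^h$ solves a linear equation with coefficients $\partial_rb(u^0)$, $\partial_rg(u^0)$ (which lie in the right $L^p$ spaces by (H3) and Lemma \ref{lem-5}), forced by $N_n(t,x):=\int^t_0\int^1_0 G_{t-s}(x,y)\sigma(s,y,u^0(s,y))(h_n-h)(s,y)\,dyds$. For each $(t,x)$ the integrand $(s,y)\mapsto G_{t-s}(x,y)\sigma(s,y,u^0(s,y))$ belongs to $L^2([0,t]\times[0,1])$, so $N_n(t,x)\to 0$; since $|N_n(t,x)|$ is bounded uniformly in $n$ and $x$, dominated convergence gives $\|N_n(t)\|_H\to 0$ for every $t$, while (\ref{eqq-5-1}) applied to $G_{t_1-s}-G_{t_2-s}$ yields $\sup_n\|N_n(t_1)-N_n(t_2)\|_H\le C|t_1-t_2|^{\gamma}$ for some $\gamma>0$, hence $N_n\to 0$ in $C([0,T];H)$. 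A (fractional) Gronwall argument based on (\ref{e-16}) then gives $X^{h_n}\to X^h$ in $C([0,T];H)$, proving (ii).

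\emph{Condition (i).} Let $h^\varepsilon\in\mathcal{A}_M$ converge in distribution to $h$ as $T_M$-valued elements. By the Girsanov theorem, $\tilde X^\varepsilon:=\mathcal{G}^\varepsilon\big(W(\cdot)+\lambda(\varepsilon)\int^\cdot_0\int^\cdot_0 h^\varepsilon(s,y)\,dyds\big)$ is the unique solution of
\begin{align*}
\tilde X^\varepsilon(t,x)&=\frac1{\lambda(\varepsilon)}\int^t_0\!\!\int^1_0 G_{t-s}(x,y)\sigma\big(s,y,u^0+\sqrt\varepsilon\lambda(\varepsilon)\tilde X^\varepsilon\big)W(dyds)\\
&\quad+\int^t_0\!\!\int^1_0 G_{t-s}(x,y)\sigma\big(s,y,u^0+\sqrt\varepsilon\lambda(\varepsilon)\tilde X^\varepsilon\big)h^\varepsilon(s,y)\,dyds\\
&\quad+\frac1{\sqrt\varepsilon\lambda(\varepsilon)}\int^t_0\!\!\int^1_0 G_{t-s}(x,y)\big(b(u^0+\sqrt\varepsilon\lambda(\varepsilon)\tilde X^\varepsilon)-b(u^0)\big)\,dyds\\
&\quad-\frac1{\sqrt\varepsilon\lambda(\varepsilon)}\int^t_0\!\!\int^1_0 \partial_yG_{t-s}(x,y)\big(g(u^0+\sqrt\varepsilon\lambda(\varepsilon)\tilde X^\varepsilon)-g(u^0)\big)\,dyds.
\end{align*}
Since the quotients above are bounded in absolute value by $L(1+|u^0|+\sqrt\varepsilon\lambda(\varepsilon)|\tilde X^\varepsilon|)|\tilde X^\varepsilon|$, $\sigma$ is bounded, $h^\varepsilon\in T_M$ and $\sqrt\varepsilon\lambda(\varepsilon)\to 0$, the stopping-time localisation and $L^p$-estimates of Section \ref{sec-4} (Lemmas \ref{lem-2}, \ref{lem-4}, together with the Garsia lemma \ref{lem-3} for the $t$-supremum of the stochastic term) yield $\sup_{0<\varepsilon\le 1}E\sup_{t\le T}\|\tilde X^\varepsilon(t)\|^p_H<\infty$ for $p>14$ and the tightness of $\{\tilde X^\varepsilon\}$ in $C([0,T];H)$. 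Writing $W^\varepsilon:=\tilde X^\varepsilon-X^{h^\varepsilon}$, with $X^{h^\varepsilon}$ the skeleton solution (\ref{eqq-17}) driven by $h^\varepsilon$, and Taylor-expanding with (H3) so that for instance $\big(g(u^0+\sqrt\varepsilon\lambda(\varepsilon)\tilde X^\varepsilon)-g(u^0)\big)/(\sqrt\varepsilon\lambda(\varepsilon))=\partial_rg(u^0)\tilde X^\varepsilon+\frac12\sqrt\varepsilon\lambda(\varepsilon)\partial^2_rg(\theta)|\tilde X^\varepsilon|^2$ (and similarly for $b$), $W^\varepsilon$ satisfies a linear equation in $W^\varepsilon$ with coefficients $\partial_rb(u^0),\partial_rg(u^0)$, forced by: (a) the genuinely small term $\lambda(\varepsilon)^{-1}\int G\sigma(\cdots)W$, which tends to $0$ in $C([0,T];H)$ in probability by the Garsia-lemma estimate of Section \ref{sec-4}; and (b) the Taylor remainders and the difference $\sigma(s,y,u^0+\sqrt\varepsilon\lambda(\varepsilon)\tilde X^\varepsilon)-\sigma(s,y,u^0)$ paired with $h^\varepsilon$, all of order $\sqrt\varepsilon\lambda(\varepsilon)$ in the relevant norms thanks to the a priori bound and $h^\varepsilon\in T_M$. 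A Gronwall estimate then gives $W^\varepsilon\to 0$ in $C([0,T];H)$ in probability; combined with $X^{h^\varepsilon}\to X^h$ in distribution (continuity from (ii) plus $h^\varepsilon\to h$ in distribution) and the tightness of $\{\tilde X^\varepsilon\}$, this gives $\tilde X^\varepsilon\to X^h=\mathcal{G}^0(\int^\cdot_0\int^\cdot_0 h\,dyds)$ in distribution, i.e.\ Condition (i). Theorem \ref{thm-3} then yields the asserted LDP.

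\emph{Main obstacle.} As in the CLT, the delicate point is the stochastic convolution $\lambda(\varepsilon)^{-1}\int^t_0\int^1_0 G_{t-s}(x,y)\sigma(\cdots)W(dyds)$: the integrand depends on $t$, so Burkholder--Davis--Gundy cannot bound $E\sup_{t\le T}\|\cdot\|_H$ directly, and one must instead produce $L^p$-estimates on its $t$-increments fine enough to feed Lemma \ref{lem-3}, exactly the mechanism used for $I^\varepsilon_3$ and $\bar I^\varepsilon_2$ in Section \ref{sec-4}. The tightness of $\{\tilde X^\varepsilon\}$ in $C([0,T];H)$ requires the same kind of increment estimates and is the second nontrivial ingredient; the remaining steps are routine applications of Lemma \ref{lem-1}, the Green-function bounds (\ref{eqq-4})--(\ref{eq-29}), Taylor expansion under (H3), and Gronwall's inequality.
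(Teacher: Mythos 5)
Your overall framework coincides with the paper's: verify Hypothesis G for $\mathcal{G}^0$ and invoke Theorem \ref{thm-3}. Your condition (ii) is essentially the paper's Theorem \ref{thm-5}, differing only in how the uniform smallness of the forcing term $N_n=\int G\,\sigma(u^0)(h_n-h)$ is obtained (pointwise weak convergence plus dominated convergence plus an equicontinuity bound, versus the paper's projection onto $P_kH$ and Arzel\`a--Ascoli); both work. For condition (i), however, you take a genuinely different route: you compare $\bar X^{\varepsilon,h^{\varepsilon}}$ directly with the random skeleton $X^{h^{\varepsilon}}$ and try to close a Gronwall estimate, then conclude by the continuous mapping theorem, whereas the paper proves tightness of the triple $(\bar X^{\varepsilon,h^{\varepsilon}},h^{\varepsilon},Z^{\varepsilon})$ (Proposition \ref{prp-6}), applies the Skorokhod representation theorem, and identifies the limit by passing to the limit in the equation (Theorem \ref{thm-4}); the paper's softer argument deliberately avoids any quantitative bound on $\bar X^{\varepsilon,h^{\varepsilon}}-X^{h^{\varepsilon}}$, and this is where your version has a gap.

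Concretely, your Gronwall step rests on the claim $\sup_{0<\varepsilon\le 1}E\sup_{t\le T}\|\bar X^{\varepsilon,h^{\varepsilon}}(t)\|_H^p<\infty$ for $p>14$. No such unlocalized moment bound is available: because $g$ grows quadratically, the paper only ever has boundedness in probability (see (\ref{eqq-13}) and Lemma \ref{lem-6}), and every moment estimate of Section \ref{sec-4} (Lemma \ref{lem-2}, Corollary \ref{cor-1}, Lemma \ref{lem-4}) is (a) cut off by the stopping time $\tau^{\varepsilon,R}$ of (\ref{eee-30}) and (b) proved for the \emph{uncontrolled} process $u^{\varepsilon}$, not for the solution of (\ref{eqq-18}), whose equation carries the additional drift $\int G\,\sigma(\cdot)h^{\varepsilon}$. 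Moreover, localization alone does not give the smallness you need: on $\{\tau^{\varepsilon,R}>T\}$ the stopping time only yields $\|\bar X^{\varepsilon,h^{\varepsilon}}\|_H\le (R+\sqrt{C_0})/(\sqrt{\varepsilon}\lambda(\varepsilon))$, so your forcing terms of ``order $\sqrt{\varepsilon}\lambda(\varepsilon)$'' (the difference $\sigma(u^0+\sqrt{\varepsilon}\lambda(\varepsilon)\bar X^{\varepsilon,h^{\varepsilon}})-\sigma(u^0)$ paired with $h^{\varepsilon}$, and the Taylor remainders $\sqrt{\varepsilon}\lambda(\varepsilon)|\bar X^{\varepsilon,h^{\varepsilon}}|^2$ tested against $\partial_yG$) are only $O(1)$ if you estimate them through the stopping time. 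To make your comparison argument rigorous you would have to re-derive the analogues of Lemma \ref{lem-2}, Corollary \ref{cor-1} and Lemma \ref{lem-4} for the controlled process (the extra $h^{\varepsilon}$-term is manageable by Cauchy--Schwarz and $h^{\varepsilon}\in T_M$, but this is precisely the delicate $L^p$/Garsia machinery of Section \ref{sec-4} and has to be redone), and then run the Gronwall bound on the localized event, removing the cutoff at the end via (\ref{eqq-13}). With that repair your route does go through and is a legitimate, more quantitative alternative to the paper's Skorokhod identification; the part that is genuinely correct as stated is the smallness of the stochastic convolution, which is exactly the paper's estimate (\ref{equat-2}).
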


\subsection{Tightness of semilinear SPDE with small perturbations}
For any  $h^{\varepsilon}\in \mathcal{A}_M$, consider the following SPDE
\begin{eqnarray}\notag
&&\bar{X}^{\varepsilon,h^{\varepsilon}}(x,t)\\ \notag
&=&\frac{1}{\lambda(\varepsilon)}\int^t_0\int^1_0G_{t-s}(x,y)\sigma(s,y, u^0(s,y)+\sqrt{\varepsilon}\lambda(\varepsilon)\bar{X}^{\varepsilon,h^{\varepsilon}}(s,y))W(dyds)\\ \notag
&&+\int^t_0\int^1_0G_{t-s}(x,y)\sigma(s,y, u^0(s,y)+\sqrt{\varepsilon}\lambda(\varepsilon)\bar{X}^{\varepsilon,h^{\varepsilon}}(s,y))h^{\varepsilon}(s,y)dyds\\ \notag
&&+\frac{1}{\sqrt{\varepsilon}\lambda(\varepsilon)}\int^t_0\int^1_0G_{t-s}(x,y)(b(s,y,u^0(s,y)+\sqrt{\varepsilon}\lambda(\varepsilon)\bar{X}^{\varepsilon,h^{\varepsilon}}(s,y))-b(s,y,u^0(s,y)))dyds\\
\label{eqq-18}
&&-\frac{1}{\sqrt{\varepsilon}\lambda(\varepsilon)}\int^t_0\int^1_0\partial_yG_{t-s}(x,y)(g(s,y,u^0(s,y)+\sqrt{\varepsilon}\lambda(\varepsilon)\bar{X}^{\varepsilon,h^{\varepsilon}}(s,y))-g(s,y,u^0(s,y)))dyds.
\end{eqnarray}
with $\bar{X}^{\varepsilon,h^{\varepsilon}}(0)=0$, then  $\mathcal{G}^{\varepsilon}\Big(W(\cdot)+\lambda(\varepsilon)\int^{\cdot}_0\int^{\cdot}_0h^{\varepsilon}(s,y)dyds\Big)=\bar{X}^{\varepsilon,h^{\varepsilon}}$.

Moreover, with the aid of Lemma \ref{lem-5} and by using the same method as Theorem 2.1 in \cite{G98}, it follows that
\begin{lemma}\label{lem-6}
For any family $\{h^\varepsilon; \varepsilon>0\}\subset \mathcal{A}_M$, it holds that
\begin{eqnarray}\label{eqqq-6}
\lim_{C\rightarrow \infty}\sup_{0<\varepsilon\leq 1}P\Big(\sup_{t\in[0,T]}\|\bar{X}^{\varepsilon,h^{\varepsilon}}(t)\|^2_H>C\Big)=0.
\end{eqnarray}
\end{lemma}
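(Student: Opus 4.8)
The plan is to adapt to the controlled equation (\ref{eqq-18}) the a priori energy estimate from the proof of Theorem 2.1 of \cite{G98} — the one already invoked for $u^{\varepsilon}$ in (\ref{eqq-13}) — attending only to the two genuinely new features: the control drift $\sigma(\cdot)h^{\varepsilon}$ and the fact that the drift coefficients now enter as difference quotients. Write $\beta_{\varepsilon}:=\sqrt{\varepsilon}\,\lambda(\varepsilon)$, so $\beta_{\varepsilon}\to0$ by (\ref{e-43}), and set $v^{\varepsilon}:=u^{0}+\beta_{\varepsilon}\bar{X}^{\varepsilon,h^{\varepsilon}}$. By the mild--strong equivalence (the Remark after Theorem \ref{thm-1}, cf.\ (\ref{e-3})), $\bar{X}:=\bar{X}^{\varepsilon,h^{\varepsilon}}$ is the $C([0,T];H)$--solution, with $\bar{X}(0)=0$, of
\[
\frac{\partial\bar{X}}{\partial t}=A\bar{X}+\frac{b(v^{\varepsilon})-b(u^{0})}{\beta_{\varepsilon}}+\frac{\partial}{\partial x}\frac{g(v^{\varepsilon})-g(u^{0})}{\beta_{\varepsilon}}+\sigma(v^{\varepsilon})h^{\varepsilon}+\frac{1}{\lambda(\varepsilon)}\sigma(v^{\varepsilon})\frac{\partial^{2}W}{\partial t\,\partial x}.
\]
For $N>0$ put $\sigma_{N}:=\inf\{t\in[0,T]:\|\bar{X}(t)\|_{H}\ge N\}\wedge T$; since $\bar{X}$ has continuous paths in $H$, $\sigma_{N}\uparrow T$ almost surely.

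Applying the chain rule to $\|\bar{X}(t\wedge\sigma_{N})\|_{H}^{2}$, as in Lemma \ref{lem-5}, produces an identity whose left side carries the dissipation $2\int_{0}^{t\wedge\sigma_{N}}\|\partial_{x}\bar{X}\|_{H}^{2}\,ds$ and whose right side is the sum of: (i) the It\^{o} correction $\lambda(\varepsilon)^{-2}\int_{0}^{t\wedge\sigma_{N}}\|\sigma(v^{\varepsilon})\|_{H}^{2}\,ds\le K^{2}T$, by (H1); (ii) a mean-zero stopped martingale with quadratic variation $\le4K^{2}\lambda(\varepsilon)^{-2}\int_{0}^{t\wedge\sigma_{N}}\|\bar{X}\|_{H}^{2}\,ds$; (iii) the control term, which by Cauchy--Schwarz and $h^{\varepsilon}\in\mathcal{A}_{M}$ satisfies $2\int_{0}^{t\wedge\sigma_{N}}(\bar{X},\sigma(v^{\varepsilon})h^{\varepsilon})\,ds\le K\int_{0}^{t\wedge\sigma_{N}}\|\bar{X}\|_{H}^{2}\,ds+KM$; and (iv)--(v) the two drift terms. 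For the last two I would use $|v^{\varepsilon}-u^{0}|=\beta_{\varepsilon}|\bar{X}|$ together with (H2)--(H3): the Lipschitz bounds give $\beta_{\varepsilon}^{-1}|b(v^{\varepsilon})-b(u^{0})|\le L(1+2|u^{0}|+\beta_{\varepsilon}|\bar{X}|)|\bar{X}|$ and likewise for $g$; the parts linear in $\bar{X}$ are controlled — after an integration by parts in the $g$--term, legitimate since $\bar{X}$ vanishes at the endpoints — using Lemma \ref{lem-5} ($\|u^{0}\|_{H}^{2}\le C_{0}$), the one-dimensional inequalities $\|w\|_{L^{\infty}}^{2}\le2\|w\|_{H}\|\partial_{x}w\|_{H}$ and $\|w\|_{L^{4}}^{2}\le\sqrt{2}\,\|w\|_{H}^{3/2}\|\partial_{x}w\|_{H}^{1/2}$ for $w\in H_{0}^{1}(0,1)$, and Young's inequality, yielding a bound $\tfrac12\|\partial_{x}\bar{X}\|_{H}^{2}+C(K,C_{0})\|\bar{X}\|_{H}^{2}$; the remaining $O(\beta_{\varepsilon})$ pieces give a further $\tfrac12\|\partial_{x}\bar{X}\|_{H}^{2}$ plus a super-quadratic remainder of order $\beta_{\varepsilon}^{4/3}\|\bar{X}\|_{H}^{10/3}+\beta_{\varepsilon}^{4}\|\bar{X}\|_{H}^{6}$.

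Absorbing every $\|\partial_{x}\bar{X}\|_{H}^{2}$ contribution into the dissipation on the left, one is left on $[0,\sigma_{N}]$ with a Gronwall inequality for $\|\bar{X}(\cdot\wedge\sigma_{N})\|_{H}^{2}$ with a constant drift coefficient (depending only on $K,L,T,C_{0}$), a bounded forcing (depending also on $M$), the stopped martingale, and the super-quadratic remainder. The remainder is disposed of exactly as in the proof of Theorem 2.1 of \cite{G98}: on the stopped interval it is dominated using the parabolic smoothing together with the smallness $\beta_{\varepsilon}\to0$, which is precisely the point that forces the use of the stopping time. Taking expectations (which removes the martingale), applying Gronwall's inequality, and then Chebyshev's inequality, one obtains $\lim_{N\to\infty}\sup_{0<\varepsilon\le1}\sup_{h^{\varepsilon}\in\mathcal{A}_{M}}P(\sigma_{N}<T)=0$; since $\{\sup_{t\in[0,T]}\|\bar{X}(t)\|_{H}^{2}>N^{2}\}\subset\{\sigma_{N}<T\}$ up to a null set, this is exactly (\ref{eqqq-6}).

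I expect the only real obstacle to be the Burgers-type term $\partial_{x}g$: since $g$ has merely quadratic growth, after passing to the difference $v^{\varepsilon}-u^{0}$ it contributes a term that is not fully absorbed by the parabolic dissipation and must be handled by the same delicate stopping-time device as in \cite{G98}. By contrast, the two features distinguishing (\ref{eqq-18}) from (\ref{eqq-9}) are mild: the control drift is tamed by the $L^{2}$-ball constraint defining $\mathcal{A}_{M}$, and the difference-quotient form of the coefficients, though a priori alarming, is in fact favourable, since the extra powers of $\bar{X}$ it produces all carry positive powers of $\sqrt{\varepsilon}\,\lambda(\varepsilon)\to0$.
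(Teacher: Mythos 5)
The route you sketch is the one the paper itself gestures at (Lemma \ref{lem-5} plus ``the same method as Theorem 2.1 in \cite{G98}''), but two steps in your execution do not hold up. First, your opening move --- applying the chain rule to $\|\bar{X}(t\wedge\sigma_N)\|_H^2$ so that the left-hand side carries the dissipation $2\int_0^{t\wedge\sigma_N}\|\partial_x\bar{X}\|_H^2\,ds$ --- is not available for this equation. The noise is space--time white, so the stochastic-convolution part of $\bar{X}^{\varepsilon,h^{\varepsilon}}$ is only H\"older of order $<1/2$ in space and $\int_0^T\|\partial_x\bar{X}(t)\|_H^2\,dt=\infty$ almost surely; the energy identity with full dissipation is precisely what fails for mild solutions here, and your absorption of the drift contributions into that dissipation (via the Gagliardo--Nirenberg interpolations) presupposes its finiteness. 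The method of \cite{G98} first splits off the stochastic convolution $Z^{\varepsilon}=\lambda(\varepsilon)^{-1}\int G\,\sigma(\cdot)\,dW$ --- which is controlled in $C([0,T];H)$, even with the decaying prefactor $\lambda(\varepsilon)^{-1}$, by a Garsia/Kolmogorov argument as in the proof of (\ref{equat-2}) --- and runs the energy estimate on $\bar{X}-Z^{\varepsilon}$, which does lie in $L^2(0,T;H^1_0)$ pathwise. Your write-up skips this splitting, which is the essential technical content of ``the same method as Theorem 2.1''.

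Second, even granting the energy inequality, your disposal of the super-quadratic remainder $\beta_{\varepsilon}^{4/3}\|\bar{X}\|_H^{10/3}+\beta_{\varepsilon}^{4}\|\bar{X}\|_H^{6}$ does not close. On $\{t\le\sigma_N\}$ these terms are at best bounded by $(\beta_{\varepsilon}^{4/3}N^{4/3}+\beta_{\varepsilon}^{4}N^{4})\|\bar{X}\|_H^2$, so Gronwall gives $E\|\bar{X}(t\wedge\sigma_N)\|_H^2\le C\exp\{C(\beta_{\varepsilon}^{4/3}N^{4/3}+\beta_{\varepsilon}^{4}N^{4})T\}$, and Chebyshev's factor $N^{-2}$ cannot beat this as $N\to\infty$ for any fixed $\varepsilon$ with $\beta_{\varepsilon}=\sqrt{\varepsilon}\lambda(\varepsilon)$ of order one; since the lemma requires $\sup_{0<\varepsilon\le1}$ and (\ref{e-43}) makes $\beta_{\varepsilon}$ small only as $\varepsilon\to0$, ``smallness of $\beta_{\varepsilon}$'' cannot be invoked uniformly. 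Nor is ``parabolic smoothing'' the mechanism of \cite{G98}: there (and in the paper's own Lemma \ref{lem-5}) the quadratic-growth convection term is neutralized by its conservative structure, and the same device removes your remainder altogether. Writing the $g$-difference term with the antiderivative $\tilde h(t,x,r)=\beta_{\varepsilon}^{-1}\int_0^r\big(g(t,x,u^0+\beta_{\varepsilon}z)-g(t,x,u^0)\big)dz$ and using the boundary conditions as in Lemma \ref{lem-5}, the dangerous part integrates to zero, and by (H3) the leftover is pointwise of order $|\partial_x u^0|\,|\bar{X}|^2$, hence bounded by $\|\partial_xu^0\|_H\|\bar{X}\|_{L^4}^2$, which Young and Gagliardo--Nirenberg absorb with a time-integrable coefficient since $\int_0^T\|\partial_xu^0(s)\|_H^2\,ds\le C$ from the proof of Lemma \ref{lem-5}; no $N$-dependent constant appears and the plain Gronwall--Chebyshev scheme then yields the uniform bound. (Alternatively, on the range where $\beta_{\varepsilon}$ is bounded below one can observe that $u^0+\beta_{\varepsilon}\bar{X}^{\varepsilon,h^{\varepsilon}}$ solves the controlled primitive SPDE, apply a uniform bound of the type (\ref{eqq-13}), and divide by $\beta_{\varepsilon}$.) Until one of these repairs is made, your final step $\lim_{N\to\infty}\sup_{0<\varepsilon\le1}P(\sigma_N<T)=0$ is not established.
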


Referring to \cite{FS17}, the following lemma gives a criterion to ensure tightness.
\begin{lemma}\label{lemm-2}
Let $\rho\in [1,\infty)$, and $q\in [1, \rho)$. Let $\zeta_n(t,y)$ be a sequence of  random fields on $[0,T]\times [0,1]$ such that $\sup_{0\leq t\leq T}\|\zeta_n(t,\cdot)\|_{L^q}\leq \theta_n$, where $\theta_n$ is a finite random variable for every $n$. Assume that the sequence $\theta_n$ is bounded in probability, i.e.,
$\lim_{C\rightarrow \infty}\sup_nP(\theta_n\geq C)=0$.
Then the sequence
\[
J(\zeta_n):=\int^t_0\int^1_0R(s,t;x,y)\zeta_n(r,y)dyds, t\in[0,T], x\in [0,1],
 \]
 where $R(s,t;x,y)=\partial_yG(s,t;x,y)$ or $R(s,t;x,y)=G(s,t;x,y)$ is uniformly tight in $C([0,T];L^{\rho}([0,1]))$.
\end{lemma}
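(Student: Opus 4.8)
The plan is to reduce the probabilistic tightness statement to a deterministic compactness assertion, using that $\theta_n$ is bounded in probability. For $C>0$ set $\mathcal{B}_C=\{v\in L^\infty([0,T];L^q([0,1])):\sup_{0\le t\le T}\|v(t,\cdot)\|_{L^q}\le C\}$. I would first prove that $J$ maps $\mathcal{B}_C$ into a relatively compact subset $K_C$ of $C([0,T];L^\rho([0,1]))$. Once this is available, uniform tightness of $\{J(\zeta_n)\}$ follows immediately: given $\eta>0$, choose $C$ with $\sup_n P(\theta_n>C)<\eta$; since $\sup_{0\le t\le T}\|\zeta_n(t,\cdot)\|_{L^q}\le\theta_n$, on the event $\{\theta_n\le C\}$ one has $\zeta_n\in\mathcal{B}_C$ and hence $J(\zeta_n)\in\overline{K_C}$, so $P(J(\zeta_n)\notin\overline{K_C})\le P(\theta_n>C)<\eta$ for every $n$, and $\overline{K_C}$ is compact.

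To prove compactness of $J(\mathcal{B}_C)$ I would verify the two hypotheses of the Arzel\`a--Ascoli theorem in $C([0,T];L^\rho([0,1]))$: (a) for each $t$, the slice $\{J(v)(t,\cdot):v\in\mathcal{B}_C\}$ is relatively compact in $L^\rho([0,1])$; and (b) the family $\{J(v):v\in\mathcal{B}_C\}$ is equicontinuous in $t$, uniformly in $v$. For (b), writing for $t_1>t_2$
\[
J(v)(t_1,\cdot)-J(v)(t_2,\cdot)=\int_{t_2}^{t_1}\!\!\int_0^1 R(s,t_1;\cdot,y)v(s,y)\,dy\,ds+\int_0^{t_2}\!\!\int_0^1\big(R(s,t_1;\cdot,y)-R(s,t_2;\cdot,y)\big)v(s,y)\,dy\,ds,
\]
I would apply Minkowski's integral inequality, H\"older in $y$, and the kernel bounds $(\ref{eqq-5})$, $(\ref{eqq-6})$, $(\ref{eqq-5-1})$, $(\ref{eq-29})$ --- in the spirit of the estimates of $K^\varepsilon_1,K^\varepsilon_2$ in Lemma $\ref{lem-2}$ and of the increment $J(t,x)-J(s,x)$ there --- to obtain $\|J(v)(t_1)-J(v)(t_2)\|_{L^\rho}\le C(C)\,|t_1-t_2|^{\alpha}$ for some $\alpha>0$ depending on $\rho,q$. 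For (a), I would establish a spatial modulus of continuity uniform in $t$ and $v$: using that off the boundary $\partial_yG_\tau=-\partial_xG_\tau$ acts as a convolution, the interpolation $\|\,\tau_h w-w\,\|_{L^a}\le(2\|w\|_{L^a})^{1-\theta}(|h|\,\|w'\|_{L^a})^{\theta}$ for a well-chosen $\theta\in(0,\kappa)$ (with $\kappa=1+\rho^{-1}-q^{-1}>0$ by the hypothesis $q\in[1,\rho)$), the derivative bounds $(\ref{eqq-4})$--$(\ref{eq-21})$, and Young's inequality $\|w*v(s,\cdot)\|_{L^\rho}\le\|w\|_{L^{1/\kappa}}\|v(s,\cdot)\|_{L^q}$, I expect a bound of the form
\[
\sup_{0\le t\le T}\|J(v)(t,\cdot+h)-J(v)(t,\cdot)\|_{L^\rho}\le C(C)\,|h|^{\theta}\int_0^{T}\tau^{\frac{\kappa-\theta}{2}-1}\,d\tau\le C(C)\,|h|^{\theta},
\]
with the usual modification near $x=0,1$ from the image-sum representation of the Dirichlet Green function. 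Combined with the uniform bound $\sup_{0\le t\le T}\|J(v)(t,\cdot)\|_{L^\rho}\le C(C)$ coming from $(\ref{e-16})$ of Lemma $\ref{lem-1}$, the Fr\'echet--Kolmogorov criterion yields (a).

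The step I expect to be the main obstacle is (a) in the case $R=\partial_yG$. There the kernel carries the stronger singularity $(t-s)^{-1}$, so one cannot differentiate $J(v)$ once in $x$ --- the resulting time integral diverges --- and only a fractional amount $\theta<\kappa$ of spatial smoothing can be extracted, by interpolation, while keeping $\int_0^t(t-s)^{(\kappa-\theta)/2-1}\,ds$ convergent; this forces $\theta$ strictly below $\kappa=1+\rho^{-1}-q^{-1}$, which is exactly where $q\in[1,\rho)$ (hence $\kappa>0$) enters, and it likewise pins down the admissible exponents in (b). Controlling the Dirichlet boundary contributions uniformly in $t$ is a further technical point, but it is handled routinely through the method of images together with the same Green-function estimates. (The case $R=G$ is strictly easier, since then one gains a full $x$-derivative and $J(v)(t,\cdot)$ is bounded in $W^{1,1}([0,1])$ uniformly in $t$ and $v\in\mathcal{B}_C$, which already embeds compactly into $L^\rho([0,1])$ for $\rho<\infty$.)
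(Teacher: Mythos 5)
The paper does not prove this lemma at all: it is imported verbatim from \cite{FS17} (and ultimately from Gy\"{o}ngy \cite{G98}), so there is no in-paper argument to compare against. Your proposal is correct and is the standard proof of that cited result: reduce tightness to the deterministic statement that $J$ maps balls of $L^\infty([0,T];L^q)$ into relatively compact subsets of $C([0,T];L^\rho)$ (using boundedness in probability of $\theta_n$), and obtain that compactness from the heat-kernel bounds via time-equicontinuity together with a uniform fractional spatial modulus (Arzel\`a--Ascoli plus Fr\'echet--Kolmogorov), with $q<\rho$ entering exactly through $\kappa=1+\rho^{-1}-q^{-1}>0$ as you indicate.
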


Let $\mathcal{D}(X)$ be the distribution of a random variable $X$.
\begin{prp}\label{prp-6}
For any $R>0$, $\mathcal{D}(\bar{X}^{\varepsilon,h^{\varepsilon}})_{\varepsilon\in (0,1]}$ is tight in $C([0,T];H).$
\end{prp}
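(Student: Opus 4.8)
The plan is to apply the tightness criterion of Lemma 4.4 to each of the four terms in the mild formulation \eqref{eqq-18}, after first establishing enough uniform-in-$\varepsilon$ integrability of $\bar X^{\varepsilon,h^\varepsilon}$ and of the corresponding drift/diffusion integrands. I would first fix $M$, work along the family $\{h^\varepsilon\}\subset\mathcal{A}_M$, and introduce a stopping time $\tau^{\varepsilon,R}:=\inf\{t\wedge T:\|\bar X^{\varepsilon,h^\varepsilon}(t)\|_H>R\}$ analogous to \eqref{eee-30}; by Lemma 4.3 it suffices to prove tightness of the stopped processes for each fixed $R$, since $P(\tau^{\varepsilon,R}\le T)\to0$ uniformly in $\varepsilon$ as $R\to\infty$. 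On the event $\{t\le\tau^{\varepsilon,R}\}$ we have the a priori bound $\|u^0(s,\cdot)+\sqrt{\varepsilon}\lambda(\varepsilon)\bar X^{\varepsilon,h^\varepsilon}(s,\cdot)\|_H\le C(R,C_0)$ (using $\sqrt{\varepsilon}\lambda(\varepsilon)\le1$ for $\varepsilon$ small and Lemma 3.1).

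Next I would treat the four terms in \eqref{eqq-18} separately. For the stochastic term $\frac{1}{\lambda(\varepsilon)}\int_0^t\int_0^1 G_{t-s}(x,y)\sigma(\cdots)W(dy\,ds)$, the Lipschitz/boundedness of $\sigma$ in (H1)--(H2) and the Garsia-lemma argument already developed in the proof of Lemma 3.4 (via \eqref{eq-17-1}--\eqref{eq-23-1}, with the extra factor $\lambda(\varepsilon)^{-1}\to0$) give $E\sup_{t\in[0,T]}\|(\text{stochastic term})(t)\|_H^p\to0$; in particular this term is bounded in probability in $C([0,T];H)$ and converges to $0$, so its laws are trivially tight. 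For the drift term $\frac{1}{\sqrt{\varepsilon}\lambda(\varepsilon)}\int_0^t\int_0^1 G_{t-s}(x,y)\big(b(u^0+\sqrt{\varepsilon}\lambda(\varepsilon)\bar X^{\varepsilon,h^\varepsilon})-b(u^0)\big)\,dy\,ds$, I would write, using (H2), $\big|b(u^0+\sqrt{\varepsilon}\lambda(\varepsilon)\bar X^{\varepsilon,h^\varepsilon})-b(u^0)\big|\le L(1+|u^0|+|u^0+\sqrt{\varepsilon}\lambda(\varepsilon)\bar X^{\varepsilon,h^\varepsilon}|)\,\sqrt{\varepsilon}\lambda(\varepsilon)|\bar X^{\varepsilon,h^\varepsilon}|$, so the whole term is dominated (up to the $\sqrt{\varepsilon}\lambda(\varepsilon)$'s cancelling) by $J(\zeta_n)$ with $\zeta_n=(1+|u^0|+|\bar X^{\varepsilon,h^\varepsilon}|)|\bar X^{\varepsilon,h^\varepsilon}|$, whose $L^q$-norm for $q\in(1,\rho)$ close to $1$ is bounded in probability by Lemma 3.1, Lemma 4.2 and Hölder; Lemma 4.4 then yields uniform tightness in $C([0,T];H)$. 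The $g$-term is handled the same way, now with the kernel $R=\partial_yG_{t-s}$ and the quadratic-growth Lipschitz bound on $g$ from (H2). The extra term $\int_0^t\int_0^1 G_{t-s}(x,y)\sigma(\cdots)h^\varepsilon(s,y)\,dy\,ds$ is bounded, via Cauchy--Schwarz in $(s,y)$ and $\|h^\varepsilon\|_{L^2([0,T]\times[0,1])}\le M$, by $\int_0^t\int_0^1 G_{t-s}(x,y)|\sigma(\cdots)|\,|h^\varepsilon|\,dy\,ds$, and since $|\sigma|\le K$ this is again of the form $J(\zeta_n)$ with $\zeta_n=|\sigma(\cdots)h^\varepsilon|$, $\sup_{0\le t\le T}\|\zeta_n(t,\cdot)\|_{L^1}\le K(\int_0^T\int_0^1|h^\varepsilon|^2)^{1/2}T^{1/2}\le KMT^{1/2}$ deterministically; Lemma 4.4 applies.

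Finally, tightness of the sum follows because $C([0,T];H)$ is a complete separable metric space and a finite sum of uniformly tight families is uniformly tight (the laws of a finite sum of tight families are tight, e.g.\ by extracting a common compact set for each $\delta$). I expect the main obstacle to be the $g$-term: because $\partial_yG_{t-s}$ is only in $L^r$ for $r<3/2$ and $g$ has quadratic growth, one must choose the exponents $\rho,q$ in Lemma 4.4 carefully (taking $\rho=2$, $q$ slightly above $1$, and absorbing the quadratic factor into the $L^q$-norm of $\zeta_n$ using the $L^p$-bounds of Lemma 3.3/Corollary 3.5 with $p$ large) so that $\sup_{0\le t\le T}\|\zeta_n(t,\cdot)\|_{L^q}$ is genuinely bounded in probability uniformly in $\varepsilon$; this is exactly the type of delicate exponent bookkeeping done in the a priori estimates of Section 3, and it is where the hypothesis $f\in L^p([0,1])$ for all $p\in[2,\infty)$ is used.
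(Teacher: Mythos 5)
Your overall strategy coincides with the paper's: decompose $\bar X^{\varepsilon,h^\varepsilon}$ into the four terms of (\ref{eqq-18}), kill the stochastic integral by the Garsia argument (the factor $\lambda(\varepsilon)^{-1}$ makes it vanish in $C([0,T];H)$), and feed the two drift terms into Lemma \ref{lemm-2}, whose hypothesis is supplied by Lemma \ref{lem-5} together with the boundedness in probability of $\sup_{t}\|\bar X^{\varepsilon,h^\varepsilon}(t)\|^2_H$ from Lemma \ref{lem-6}; the stopping time you introduce is harmless but not needed for this. However, there is a genuine gap in your treatment of the control term $\int_0^t\int_0^1G_{t-s}(x,y)\sigma(\cdots)h^\varepsilon(s,y)\,dy\,ds$: you apply Lemma \ref{lemm-2} with $\zeta=\sigma(\cdots)h^\varepsilon$ and claim $\sup_{0\le t\le T}\|\zeta(t,\cdot)\|_{L^1}\le KMT^{1/2}$ ``deterministically''. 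This is false: $h^\varepsilon\in\mathcal{A}_M$ only controls the space-time norm $\int_0^T\int_0^1|h^\varepsilon|^2\,dy\,ds$, and Cauchy--Schwarz in $(s,y)$ gives an $L^1$-in-time bound on $\|h^\varepsilon(s,\cdot)\|_{L^1}$, not a sup-in-time bound; $\sup_t\|h^\varepsilon(t,\cdot)\|_{L^1}$ may well be infinite, so the hypothesis of Lemma \ref{lemm-2} is not verified and that step fails as written. The paper treats this term differently: it bounds it uniformly by $K\big(\int_0^t\int_0^1G^2_{t-s}\,dy\,ds\big)^{1/2}M^{1/2}\le KC(T)M^{1/2}$ and invokes (4.2) of \cite{FS17} for its tightness; equivalently, one can obtain time-equicontinuity directly as in the proof of Theorem \ref{thm-5}, where the analogous term is shown, via (\ref{eqq-5}) and (\ref{eqq-5-1}), to satisfy a modulus of continuity of order $|t-s|^{1/2}$ in $H$ uniformly over $T_M$, which with an Arzel\`{a}--Ascoli argument yields tightness.

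Two smaller points on the drift terms: take $q=1$ (which Lemma \ref{lemm-2} explicitly allows), not ``$q$ slightly above $1$''; for $q>1$ the $L^q$-norm of $(1+|u^0|+|\bar X^{\varepsilon,h^\varepsilon}|)|\bar X^{\varepsilon,h^\varepsilon}|$ would require $L^{2q}$-control of $\bar X^{\varepsilon,h^\varepsilon}$, which is not available: Lemma \ref{lem-6} only gives the $H$-norm, and the $L^p$-bounds you cite (Lemma \ref{lem-2}, Corollary \ref{cor-1}) concern $u^\varepsilon-u^0$ and $Y^\varepsilon$, not $\bar X^{\varepsilon,h^\varepsilon}$. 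With $q=1$ the quadratic growth of $g$ and the singularity of $\partial_yG$ cause no extra difficulty, so the large-$p$ exponent bookkeeping (and the hypothesis $f\in L^p$) you anticipate is not needed at this point; this is exactly how the paper handles $J^{\varepsilon}_3$ and $J^{\varepsilon}_4$.
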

\begin{proof}
From (\ref{eqq-18}), we have
\begin{eqnarray}\notag
&&\bar{X}^{\varepsilon,h^{\varepsilon}}(x,t)\\ \notag
&=&\frac{1}{\lambda(\varepsilon)}\int^t_0\int^1_0G_{t-s}(x,y)\sigma(s,y, u^0(s,y)+\sqrt{\varepsilon}\lambda(\varepsilon)\bar{X}^{\varepsilon,h^{\varepsilon}}(s,y))W(dyds)\\ \notag
&&+\int^t_0\int^1_0G_{t-s}(x,y)\sigma(s,y, u^0(s,y)+\sqrt{\varepsilon}\lambda(\varepsilon)\bar{X}^{\varepsilon,h^{\varepsilon}}(s,y))h^{\varepsilon}(s,y)dyds\\ \notag
&&+\frac{1}{\sqrt{\varepsilon}\lambda(\varepsilon)}\int^t_0\int^1_0G_{t-s}(x,y)(b(s,y,u^0(s,y)+\sqrt{\varepsilon}\lambda(\varepsilon)\bar{X}^{\varepsilon,h^{\varepsilon}}(s,y))-b(s,y,u^0(s,y)))dyds\\
\notag
&&-\frac{1}{\sqrt{\varepsilon}\lambda(\varepsilon)}\int^t_0\int^1_0\partial_yG_{t-s}(x,y)(g(s,y,u^0(s,y)+\sqrt{\varepsilon}\lambda(\varepsilon)\bar{X}^{\varepsilon,h^{\varepsilon}}(s,y))-g(s,y,u^0(s,y)))dyds\\
\label{eqqq-5}
&:=&J^{\varepsilon}_1+J^{\varepsilon}_2+J^{\varepsilon}_3+J^{\varepsilon}_4.
\end{eqnarray}
Firstly, we claim that
\begin{eqnarray}\label{equat-2}
\lim_{\varepsilon\rightarrow 0}E\sup_{0\leq t\leq T}\|J^{\varepsilon}_1\|_H= 0.
\end{eqnarray}
Indeed, by (H1) and using the similar method as the estimation of (\ref{eq-17}), for any $t_1, t_2\in [0,T]$ and $p>14$, we obtain
\begin{eqnarray}\label{eq-28}
E\|J^{\varepsilon}_1(t_1)-J^{\varepsilon}_1(t_2)\|^p_H
\leq \frac{C(K,p)}{(\lambda(\varepsilon))^{p}}|t_1-t_2|^{\frac{p}{4}}.
\end{eqnarray}
Applying Lemma \ref{lem-3} with
\[
\Psi(r)=r^{p}, \quad p(r)=r^{\frac{1}{4}},
\]
and
\begin{eqnarray*}
\varrho=\int^T_0\int^T_0\left(\frac{\|J^{\varepsilon}_1(t_1)-J^{\varepsilon}_1(t_2)\|_H}{|t_1-t_2|^{\frac{1}{4}}}\right)^{p}dt_1dt_2,
\end{eqnarray*}
we have
\begin{eqnarray*}
\|J^{\varepsilon}_1(t_1)-J^{\varepsilon}_1(t_2)\|_H&\leq&8\int^{|t_1-t_2|}_0(\varrho r^{-2})^{\frac{1}{p}}dr^{\frac{1}{4}}\\
&\leq&C\varrho^{\frac{1}{p}}\int^{|t_1-t_2|}_0r^{-\frac{3}{4}-\frac{2}{p}}dr\\
&\leq&C\varrho^{\frac{1}{p}}|t_1-t_2|^{\frac{p-8}{4p}}.
\end{eqnarray*}
Let $t=t_1$ and $t_2=0$, we get
\begin{eqnarray*}
\|J^{\varepsilon}_1(t)\|_H\leq C\varrho^{\frac{1}{p}}t^{\frac{p-8}{4p}},
\end{eqnarray*}
which implies that
\begin{eqnarray*}
\sup_{t\in [0,T]}\|J^{\varepsilon}_1(t)\|_H\leq C(T)\varrho^{\frac{1}{p}}.
\end{eqnarray*}
By (\ref{eq-28}), it gives that
\[
E\varrho\leq \frac{C(L,p)}{(\lambda(\varepsilon))^{p}}\rightarrow 0, \quad \varepsilon\rightarrow 0,
\]
Thus,
\begin{eqnarray*}
E\sup_{t\in [0,T]}\|J^{\varepsilon}_1(t)\|_H\leq C(T)E\varrho\rightarrow 0, \  {\rm{as}} \ \varepsilon\rightarrow 0.
\end{eqnarray*}
By (\ref{equat-2}), we deduce that $J^{\varepsilon}_1$ converges in probability in $C([0,T];H)$.

By (H1), we get
\begin{eqnarray}\notag
\sup_{0<\varepsilon\leq 1}J^{\varepsilon}_2&=&\sup_{0<\varepsilon\leq 1}\int^t_0\int^1_0G_{t-s}(x,y)\sigma(s,y, u^0(s,y)+\sqrt{\varepsilon}\lambda(\varepsilon)\bar{X}^{\varepsilon,h^{\varepsilon}}(s,y))h^{\varepsilon}(s,y)dyds\\
\notag
&\leq& K\Big(\int^t_0\int^1_0G^2_{t-s}(x,y)dyds\Big)^{\frac{1}{2}}\sup_{0<\varepsilon\leq 1}\Big(\int^t_0\int^1_0|h^{\varepsilon}(s,y)|^2dyds\Big)^{\frac{1}{2}}\\
\label{equat-4}
&\leq& KC(T)M^{\frac{1}{2}}.
\end{eqnarray}
Referring to (4.2) in \cite{FS17}, (\ref{equat-4}) implies the tightness of $J^{\varepsilon}_2$.

For $J^{\varepsilon}_3$, applying Lemma \ref{lemm-2} with $\rho=2, q=1$, by (H2), we have
\begin{eqnarray*}
&&\sup_{0\leq t\leq T}\|b(s,y,u^0+\sqrt{\varepsilon}\lambda(\varepsilon)\bar{X}^{\varepsilon,h^{\varepsilon}}(s,y))-b(s,y,u^0(s,y))\|_{L^1}\\
&\leq& L\sup_{0\leq t\leq T}\sqrt{\varepsilon}\lambda(\varepsilon)\int^1_{0}(1+|u^0|+\sqrt{\varepsilon}\lambda(\varepsilon)|\bar{X}^{\varepsilon,h^{\varepsilon}}|)|\bar{X}^{\varepsilon,h^{\varepsilon}}|dx\\
&\leq&L \sup_{0\leq t\leq T}\sqrt{\varepsilon}\lambda(\varepsilon)[(1+\|u^0\|_H+\sqrt{\varepsilon}\lambda(\varepsilon)\|\bar{X}^{\varepsilon,h^{\varepsilon}}\|_H)\|\bar{X}^{\varepsilon,h^{\varepsilon}}\|_H]\\
&\leq& \sqrt{\varepsilon}\lambda(\varepsilon)L[1+C_0+(1+\sqrt{\varepsilon}\lambda(\varepsilon))\sup_{0\leq t\leq T}\|\bar{X}^{\varepsilon,h^{\varepsilon}}\|^2_H].
\end{eqnarray*}
Let
\[
\theta=\sqrt{\varepsilon}\lambda(\varepsilon)L\Big(1+C_0+(1+\sqrt{\varepsilon}\lambda(\varepsilon))\sup_{0\leq t\leq T}\|\bar{X}^{\varepsilon,h^{\varepsilon}}\|^2_H\Big),
\]
we have
\begin{eqnarray*}
&&\lim_{M\rightarrow \infty} \sup_{0<\varepsilon\leq 1} P(\theta\geq M)\\
&\leq& \lim_{M\rightarrow \infty} \sup_{0<\varepsilon\leq 1}P\Big(\sqrt{\varepsilon}\lambda(\varepsilon)L(1+C_0)\geq \frac{M}{2}\Big)\\
&& + \lim_{M\rightarrow \infty} \sup_{0<\varepsilon\leq 1}P\Big(\sup_{0\leq t\leq T}\|\bar{X}^{\varepsilon,h^{\varepsilon}}\|^2_H\geq \frac{M}{2\sqrt{\varepsilon}\lambda(\varepsilon)L(1+\sqrt{\varepsilon}\lambda(\varepsilon))}\Big).
\end{eqnarray*}
By Lemma \ref{lem-6}, we obtain
\begin{eqnarray}
\lim_{M\rightarrow \infty} \sup_{0<\varepsilon\leq 1} P(\theta\geq M)=0.
\end{eqnarray}
Thus, we get the tightness of $J^{\varepsilon}_3$ in $C([0,T];H)$.
Employing similar method as above, we obtain the tightness of $J^{\varepsilon}_4$ in $C([0,T];H)$.
We complete the proof.
\end{proof}

\subsection{The proof of MDP }

According to Theorem \ref{thm-2}, the proof of MDP  will be completed if the following Theorem \ref{thm-5} and Theorem \ref{thm-4} are established.
\begin{thm}\label{thm-5}
The family
\[
K_M=\left\{\mathcal{G}^0(\int^{\cdot}_{0}\int^{\cdot}_{0}h(s,y)dyds): h\in T_M\right\}
\]
is a compact subset of $C([0,T];H)$.
\end{thm}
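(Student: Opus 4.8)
The plan is to realize $K_M$ as the continuous image of a compact set. Since $T_M$ equipped with the weak topology of $L^2([0,T]\times[0,1])$ is a compact metric space (as recalled above), it suffices to show that the solution map $h\mapsto X^h$ of the skeleton equation (\ref{eqq-17}) is continuous from $T_M$ (weak topology) into $C([0,T];H)$; then $K_M=\mathcal{G}^0\big(\{\int_0^\cdot\int_0^\cdot h\,dy\,ds:h\in T_M\}\big)$ is compact. So the whole problem reduces to: if $h_n\to h$ weakly in $L^2([0,T]\times[0,1])$ with $h_n,h\in T_M$, then $X^{h_n}\to X^h$ in $C([0,T];H)$.

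Set $Y_n:=X^{h_n}-X^h$. Subtracting the two copies of (\ref{eqq-17}) and using the linearity of the equation in the solution gives
\begin{eqnarray*}
Y_n(t,x)&=&\int^t_0\int^1_0 G_{t-s}(x,y)\partial_rb(s,y,u^0(s,y))Y_n(s,y)\,dy\,ds\\
&&\ -\int^t_0\int^1_0 \partial_yG_{t-s}(x,y)\partial_rg(s,y,u^0(s,y))Y_n(s,y)\,dy\,ds+R_n(t,x),
\end{eqnarray*}
where $R_n(t,x):=\int^t_0\int^1_0 G_{t-s}(x,y)\sigma(s,y,u^0(s,y))\big(h_n(s,y)-h(s,y)\big)\,dy\,ds$. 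Using $|\partial_rb(s,y,u^0)|+|\partial_rg(s,y,u^0)|\le 2K(1+|u^0(s,y)|)$, the Cauchy--Schwarz inequality and Lemma \ref{lem-5}, one gets $\|\partial_rb(s,\cdot,u^0)Y_n(s,\cdot)\|_{L^1}+\|\partial_rg(s,\cdot,u^0)Y_n(s,\cdot)\|_{L^1}\le 2K(1+\sqrt{C_0})\|Y_n(s)\|_H$. Applying Lemma \ref{lem-1} with $\rho=2$, $q=1$ to both kernels $G_{t-s}$ and $\partial_yG_{t-s}$ then yields
\[
\|Y_n(t)\|_H\le C(K,C_0)\int^t_0(t-s)^{-\frac34}\|Y_n(s)\|_H\,ds+\sup_{0\le r\le T}\|R_n(r)\|_H,
\]
so a singular (Gronwall--Henry type) inequality, valid since $\tfrac34<1$, gives $\sup_{t\in[0,T]}\|Y_n(t)\|_H\le C(K,C_0,T)\sup_{0\le r\le T}\|R_n(r)\|_H$. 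It therefore remains only to prove that $R_n\to 0$ in $C([0,T];H)$.

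This last step is the heart of the matter, and the only place where the weak convergence of $h_n$ must be exploited carefully. For fixed $(t,x)$ the kernel $(s,y)\mapsto G_{t-s}(x,y)\sigma(s,y,u^0(s,y))$ belongs to $L^2([0,t]\times[0,1])$, since by (\ref{eqq-5}) its square integrates to at most $K^2\int^t_0\int^1_0 G^2_{t-s}(x,y)\,dy\,ds\le CK^2t^{1/2}$; hence $h_n-h\rightharpoonup 0$ forces $R_n(t,x)\to0$ for every $(t,x)$, while $|R_n(t,x)|\le 2\sqrt{M}\,(CK^2T^{1/2})^{1/2}$, so dominated convergence in $x$ gives $\|R_n(t)\|_H\to 0$ for each fixed $t$. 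To upgrade this to uniform convergence I will establish equicontinuity of $\{R_n\}$: for $t_1>t_2$ split $R_n(t_1,x)-R_n(t_2,x)$ into the increment of the kernel on $[0,t_2]$ and the fresh contribution on $[t_2,t_1]$, and apply the Cauchy--Schwarz inequality together with (\ref{eqq-5}) and (\ref{eqq-5-1}) at exponent $r=2$ and the uniform bound $\|h_n-h\|_{L^2}\le 2\sqrt{M}$, to obtain $\sup_x|R_n(t_1,x)-R_n(t_2,x)|\le C(K,M)|t_1-t_2|^{1/4}$, hence $\|R_n(t_1)-R_n(t_2)\|_H\le C(K,M)|t_1-t_2|^{1/4}$, uniformly in $n$. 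Equicontinuity together with the pointwise-in-$t$ convergence $R_n(t)\to0$ in $H$ yields $R_n\to0$ uniformly on $[0,T]$, i.e.\ in $C([0,T];H)$, which finishes the proof.

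The main obstacle, as indicated, is precisely the interaction between the weak (not strong) convergence of $h_n$ and the time-singularity of $G_{t-s}$ at $s=t$: the argument must use that the relevant kernels are genuinely square-integrable in $(s,y)$ with a bound uniform in $(t,x)$, and Hölder continuous in $t$ in the $L^2$-sense; once $R_n\to0$ in $C([0,T];H)$ is secured, the passage to $Y_n\to0$ is routine via Lemma \ref{lem-1}, Lemma \ref{lem-5} and the singular Gronwall inequality, and the a priori bound (\ref{eee-26}) guarantees all the quantities involved are finite.
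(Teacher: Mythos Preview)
Your proof is correct and follows the same overall architecture as the paper: reduce to continuity of $h\mapsto X^h$ from $T_M$ (weak) into $C([0,T];H)$, decompose $X^{h_n}-X^h$ into the $b$-term, the $g$-term and the forcing term $R_n$ (the paper's $J^n_3$), bound the first two via Lemma~\ref{lem-1} with $\rho=2$, $q=1$, and close by a singular Gronwall argument.

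The only genuine difference is in how you handle $R_n$. The paper introduces finite-rank projections $P_k$ in $H$, shows $\sup_t\|P_kJ^n_3(t)-J^n_3(t)\|_H\to0$ as $k\to\infty$ uniformly in $n$, establishes equicontinuity of $\{P_kJ^n_3\}_n$, and then invokes Arzel\`a--Ascoli on $\{P_kJ^n_3\}_n$; the projections are used precisely to secure the pointwise-in-$t$ relative compactness in $H$ that Arzel\`a--Ascoli requires. Your route is more direct: you prove equicontinuity of $\{R_n\}$ in $C([0,T];H)$ via (\ref{eqq-5}) and (\ref{eqq-5-1}), obtain pointwise-in-$t$ convergence $\|R_n(t)\|_H\to0$ from weak convergence of $h_n-h$ and dominated convergence in $x$, and then use the elementary fact that an equicontinuous sequence converging pointwise on a compact interval converges uniformly. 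This bypasses Arzel\`a--Ascoli altogether and hence dispenses with the projections; it is a legitimate simplification of the paper's argument.
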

\begin{proof}
Let $X^{h_n}=\{\mathcal{G}^0(\int^{\cdot}_0\int^{\cdot}_0h_n(s,y)dyds): n\geq 1\}$ be a sequence of $K_M$. Due to the fact that $T_M$ is a compact subset of $L^2([0,T]\times[0,1])$ under weak topology, there exists a subsequence still denoted by $\{n\}$ and an element $h\in T_M$ such that
 $h_{n}\rightarrow h$ weakly in $T_M$ as $n \rightarrow \infty$.
We need to prove $X^{h_n}\rightarrow X^h$ strongly in $C([0,T];H)$.

From (\ref{eqq-17}), we know that
\begin{eqnarray*}
 X^{h_n}(t,x)-X^h(t,x)&=& \int^t_0\int^1_0G_{t-s}(x,y)\partial_rb(s,y,u^0(s,y))( X^{h_n}(s,y)-X^h(s,y))dyds\\
 && -\int^t_0\int^1_0\partial_yG_{t-s}(x,y)\partial_rg(s,y,u^0(s,y))( X^{h_n}(s,y)-X^h(s,y))dyds\\
 && +\int^t_0\int^1_0G_{t-s}(x,y)\sigma(s,y,u^0(s,y))(h_n(s,y)-h(s,y))dyds\\
 &:=&J^n_1(t)+J^n_2(t)+J^n_3(t).
\end{eqnarray*}
By Lemma \ref{lem-1}, (H3) and Lemma \ref{lem-5}, we deduce that
\begin{eqnarray}\notag
 \|J^n_1(t)\|_H&\leq& C\int^t_0(t-s)^{-\frac{3}{4}}\|\partial_rb(s,y,u^0(s,y))( X^{h_n}(s,y)-X^h(s,y))\|_{L^1}ds\\ \notag
 &\leq& CK\int^t_0(t-s)^{-\frac{3}{4}}\|(1+u^0(s,y))( X^{h_n}(s,y)-X^h(s,y))\|_{L^1}ds\\ \notag
 &\leq& CK\int^t_0(t-s)^{-\frac{3}{4}}(1+\|u^0(s)\|_H)\| X^{h_n}(s)-X^h(s)\|_{H}ds\\
 \label{ee-6}
 &\leq& CK(1+C_0)\int^t_0(t-s)^{-\frac{3}{4}}\| X^{h_n}(s)-X^h(s)\|_{H}ds.
 \end{eqnarray}
 Similar to $J^n_1(t)$, we deduce that
 \begin{eqnarray}\notag
 \|J^n_2(t)\|_H&\leq& CK\int^t_0(t-s)^{-\frac{3}{4}}\|\partial_rg(s,y,u^0(s,y))( X^{h_n}(s,y)-X^h(s,y))\|_{L^1}ds\\ \notag
 &\leq& CK\int^t_0(t-s)^{-\frac{3}{4}}\|(1+u^0(s,y))( X^{h_n}(s,y)-X^h(s,y))\|_{L^1}ds\\ \notag
 &\leq& CK\int^t_0(t-s)^{-\frac{3}{4}}(1+\|u^0(s)\|_H)\| X^{h_n}(s)-X^h(s)\|_{H}ds\\
 \label{ee-7}
 &\leq& CK(1+C_0)\int^t_0(t-s)^{-\frac{3}{4}}\| X^{h_n}(s)-X^h(s)\|_{H}ds.
 \end{eqnarray}

Let $P_k$ be the orthogonal projection in $H$ onto the space spanned by $\{e_1,\cdot\cdot\cdot, e_k\}_{k\geq 1}$ with $\{e_k\}$ be an orthonormal basis of $H$, we have
 \begin{eqnarray*}
&&\sup_{0\leq t\leq T}\|P_kJ^n_3(t)-J^n_3(t)\|^2_H\\
&\leq& \int^1_0\Big(\int^t_0\int^1_0G^2_{t-s}(x,y)\Big((P_k-I)\sigma(u^0(y,s)))^2dyds\Big)\Big(\int^t_0\int^1_0(h_n-h)^2dyds\Big)dx\\
&\leq& 2M^2\int^1_0\int^t_0\int^1_0G^2_{t-s}(x,y)\Big((P_k-I)\sigma(u^0(y,s))\Big)^2dydsdx\\
&\leq& 2M^2\int^t_0(t-s)^{-\frac{1}{2}}\int^1_0\Big((P_k-I)\sigma(u^0(y,s))\Big)^2dyds.
\end{eqnarray*}
Since
\begin{eqnarray*}
2M^2\int^t_0(t-s)^{-\frac{1}{2}}\int^1_0[(P_k-I)\sigma(u^0(y,s))]^2dyds\leq CM^2K^2T^{\frac{1}{2}},
\end{eqnarray*}
by the dominated convergence theorem, it follows that
\begin{eqnarray}\label{equat-3}
\sup_{0\leq t\leq T}\|P_kJ^n_3(t)-J^n_3(t)\|^2_H\rightarrow0, \quad k\rightarrow \infty.
\end{eqnarray}
For any $k\geq 1$, $t,s\in [0,T], t>s$, we have
\begin{eqnarray*}
\|P_kJ^n_3(t)-P_kJ^n_3(s)\|^2_H&\leq& \Big\|\int^s_0\int^1_0(G_{t-r}(x,y)-G_{s-r}(x,y))P_k\sigma(u^0)(h_n(y,r)-h(y,r))dydr\Big\|^2_H\\
&& +\Big\|\int^t_s\int^1_0G_{t-r}(x,y)P_k\sigma(u^0)(h_n(y,r)-h(y,r))dydr\Big\|^2_H\\
&:=& J^n_{3,1}+J^n_{3,2},
\end{eqnarray*}
By (H1) and (\ref{eqq-5-1}), we get
\begin{eqnarray*}
J^n_{3,1}&\leq& \int^1_0\Big(\int^s_0\int^1_0(G_{t-r}(x,y)-G_{s-r}(x,y))^2\sigma^2(u^0(y,r))dydr\Big)\Big(\int^s_0\int^1_0(h_n(y,r)-h(y,r))^2dydr\Big)dx\\
&\leq& CK^2M\int^1_0\int^s_0\int^1_0(G_{t-r}(x,y)-G_{s-r}(x,y))^2dydrdx\\
&\leq& CK^2M(t-s)^{\frac{1}{2}}.
\end{eqnarray*}
Utilizing (\ref{eqq-5}) and (H1), we deduce that
\begin{eqnarray*}
J^n_{3,2}&\leq& \int^1_0\Big(\int^t_s\int^1_0G^2_{t-r}(x,y)\sigma^2(u^0(y,r))dydr\Big)\Big(\int^t_s\int^1_0(h_n(y,r)-h(y,r))^2dydr\Big)dx\\
&\leq& CMK^2\int^1_0\int^t_s\int^1_0G^2_{t-r}(x,y)dydrdx\\
&\leq& CMK^2(t-s)^{\frac{1}{2}}.
\end{eqnarray*}
Combing the above two estimates, it yields
\begin{eqnarray*}
\|P_kJ^n_3(t)-P_kJ^n_3(s)\|^2_H\leq CMK^2(t-s)^{\frac{1}{2}}.
\end{eqnarray*}
Moreover, for any $t\in [0,T]$, we have
\begin{eqnarray*}
\sup_n\|J^n_3(t)\|^2_H&\leq& \sup_n\int^1_0\Big(\int^t_0\int^1_0G^2_{t-s}(x,y)\sigma^2(u^0)dyds\Big)\Big(\int^t_0\int^1_0(h_n(y,s)-h(y,s))^2dyds\Big)dx\\
&\leq& CMK^2\int^1_0\int^t_0\int^1_0G^2_{t-s}(x,y)dydsdx\\
&\leq& CMK^2T^{\frac{1}{2}}.
\end{eqnarray*}
Since for any $k\geq 1$, $P_k:H\rightarrow H$ is a compact operator, then
for any $t\in [0,T]$, $\{P_kJ^n_3(t),n\geq 1 \}$ is relative compact in $H$. Moreover, $\{P_kJ^n_3(t),n\geq 1 \}$ is closed in $H$. As a result of Arzel\`{a}-Ascoli theorem, $\{P_kJ^n_3\}_n$ is uniformly compact in $C([0,T];H)$.
On the other hand, since $h_n-h$ converges to $0$ weakly in $L^2([0,T]\times [0,1]; \mathbb{R})$, then
\[
P_kJ^n_3(t)=\int^t_0\int^1_0G_{t-s}(x,y)P_k\sigma(u^0)(h_n-h)dyds\rightarrow 0\quad {\rm{in}} \ H, \quad {\rm{as}}\ n\rightarrow \infty.
\]
Thus, we have
\begin{eqnarray}\label{ee-2}
\lim_{n\rightarrow \infty}\sup_{t\in [0,T]}\|P_kJ^n_3(t)\|_H=0.
\end{eqnarray}
Combing (\ref{equat-3}) and (\ref{ee-2}), we conclude that
 \begin{eqnarray}\label{ee-3}
 \lim_{n\rightarrow \infty}\sup_{t\in [0,T]}\|J^n_3(t)\|_H= 0.
\end{eqnarray}
Based on (\ref{ee-6}), (\ref{ee-7}) and (\ref{ee-3}), it follows that
\begin{eqnarray*}
\| X^{h_n}(t)-X^h(t)\|_H\leq 2KC(1+C_0)\int^t_0(t-s)^{-\frac{3}{4}}\| X^{h_n}(s)-X^h(s)\|_{H}ds+\|J^n_3(t)\|_H.
\end{eqnarray*}
By iteration and Gronwall inequality, we have
\begin{eqnarray*}
\| X^{h_n}(t)-X^h(t)\|_H\leq C(K, C_0,T) \|J^n_3(t)\|_H.
\end{eqnarray*}
Utilizing (\ref{ee-3}), it yields
\begin{eqnarray*}
\sup_{t\in [0,T]}\| X^{h_n}(t)-X^h(t)\|_H
\leq C(K, C_0,T) \sup_{t\in [0,T]}\|J^n_3(t)\|_H\rightarrow0,\ {\rm{as}}\ n\rightarrow \infty.
\end{eqnarray*}
We complete the proof.
\end{proof}

\begin{thm}\label{thm-4}
Let $\{h^\varepsilon; \varepsilon>0\}\subset \mathcal{A}_M$ be a sequence that converges in distribution to $h$ as $\varepsilon\rightarrow 0$. Then
\[
\mathcal{G}^{\varepsilon}\Big(W(\cdot)+\lambda(\varepsilon)\int^{\cdot}_0\int^{\cdot}_0h^\varepsilon(s,y)dyds\Big)\ {\rm{converges\ in\ distribution\ to}}\ \mathcal{G}^{0}\Big(\int^{\cdot}_0\int^{\cdot}_0h(s,y)dyds\Big),
\]
in $C([0,T];H)$.
\end{thm}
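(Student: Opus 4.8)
The plan is to realise $\mathcal G^{\varepsilon}\big(W(\cdot)+\lambda(\varepsilon)\int_0^{\cdot}\int_0^{\cdot}h^{\varepsilon}\,dy\,ds\big)=\bar X^{\varepsilon,h^{\varepsilon}}$ from (\ref{eqq-18}) and to compare it with the solution $X^{h^{\varepsilon}}$ of the skeleton equation (\ref{eqq-17}) driven by the \emph{random} control $h^{\varepsilon}$. Concretely, I would prove that $\sup_{t\in[0,T]}\|\bar X^{\varepsilon,h^{\varepsilon}}(t)-X^{h^{\varepsilon}}(t)\|_H\to0$ in probability and that $X^{h^{\varepsilon}}\Rightarrow X^{h}$ in $C([0,T];H)$; combining these by Slutsky's lemma then gives $\bar X^{\varepsilon,h^{\varepsilon}}\Rightarrow X^{h}=\mathcal G^{0}\big(\int_0^{\cdot}\int_0^{\cdot}h\,dy\,ds\big)$, which is the assertion. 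The convergence $X^{h^{\varepsilon}}\Rightarrow X^{h}$ follows from the continuous mapping theorem: the proof of Theorem \ref{thm-5} shows exactly that $h\mapsto X^{h}$ is continuous from $T_M$ with the (compact, metrisable) weak topology into $C([0,T];H)$, while $h^{\varepsilon}\Rightarrow h$ as $T_M$-valued random elements by hypothesis.

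For the difference, subtract (\ref{eqq-17}) with control $h^{\varepsilon}$ from (\ref{eqq-18}) and expand the nonlinearities by the fundamental theorem of calculus: setting
\[
\beta^{\varepsilon}:=\int_0^1\partial_rb\big(\cdot,u^{0}+\theta\sqrt{\varepsilon}\lambda(\varepsilon)\bar X^{\varepsilon,h^{\varepsilon}}\big)\,d\theta,\qquad \gamma^{\varepsilon}:=\int_0^1\partial_rg\big(\cdot,u^{0}+\theta\sqrt{\varepsilon}\lambda(\varepsilon)\bar X^{\varepsilon,h^{\varepsilon}}\big)\,d\theta,
\]
and $\sigma^{\varepsilon}:=\sigma(\cdot,u^{0}+\sqrt{\varepsilon}\lambda(\varepsilon)\bar X^{\varepsilon,h^{\varepsilon}})$, the difference $D^{\varepsilon}:=\bar X^{\varepsilon,h^{\varepsilon}}-X^{h^{\varepsilon}}$ solves
\[
D^{\varepsilon}(t,\cdot)=\mathcal E^{\varepsilon}(t,\cdot)+\int_0^t\!\!\int_0^1 G_{t-s}(\cdot,y)\,\beta^{\varepsilon}D^{\varepsilon}\,dy\,ds-\int_0^t\!\!\int_0^1\partial_yG_{t-s}(\cdot,y)\,\gamma^{\varepsilon}D^{\varepsilon}\,dy\,ds,
\]
where $\mathcal E^{\varepsilon}$ collects the stochastic convolution $J^{\varepsilon}_1$ of (\ref{eqqq-5}), the term $\int_0^t\!\int_0^1 G_{t-s}(\sigma^{\varepsilon}-\sigma(\cdot,u^{0}))h^{\varepsilon}$, and the terms $\int_0^t\!\int_0^1 G_{t-s}(\beta^{\varepsilon}-\partial_rb(\cdot,u^{0}))X^{h^{\varepsilon}}$ and $-\int_0^t\!\int_0^1\partial_yG_{t-s}(\gamma^{\varepsilon}-\partial_rg(\cdot,u^{0}))X^{h^{\varepsilon}}$.

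First I would show $\sup_{t\in[0,T]}\|\mathcal E^{\varepsilon}(t)\|_H\to0$ in probability. The term $J^{\varepsilon}_1$ is handled exactly as in Proposition \ref{prp-6} (where $\lambda(\varepsilon)\to\infty$ makes it vanish). For the remaining pieces I would work on the event $\{\sup_{t}\|\bar X^{\varepsilon,h^{\varepsilon}}(t)\|_H^2\le C\}$, which by Lemma \ref{lem-6} has probability arbitrarily close to $1$ uniformly in $\varepsilon$; on this event, since $\sqrt{\varepsilon}\lambda(\varepsilon)\to0$, (H1) and (H3) yield that $\|\sigma^{\varepsilon}-\sigma(\cdot,u^{0})\|_{L^2([0,T]\times[0,1])}$, $\|\beta^{\varepsilon}-\partial_rb(\cdot,u^{0})\|_{L^2}$ and $\|\gamma^{\varepsilon}-\partial_rg(\cdot,u^{0})\|_{L^2}$ all tend to $0$ while staying bounded, hence tend to $0$ in $L^{p}([0,T]\times[0,1])$ for every finite $p$. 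H\"older's inequality with the kernel estimates (\ref{eqq-5}), (\ref{eqq-6}) (choosing the $L^{p}$ exponent large enough to absorb the time-singularity of $G_{t-s}^{2}$ and $|\partial_yG_{t-s}|^{2}$), together with the uniform bounds $\int_0^T\int_0^1|h^{\varepsilon}|^2\le M$ and $\sup_t\|X^{h^{\varepsilon}}(t)\|_H\le C(K,T,M,C_0)$ from (\ref{eee-26}), then gives $\sup_t\|\mathcal E^{\varepsilon}(t)\|_H\to0$. Finally, since the linear growth in (H3) and Lemma \ref{lem-5} make $\|\beta^{\varepsilon}(s)\|_{L^2}+\|\gamma^{\varepsilon}(s)\|_{L^2}$ uniformly bounded on the good event, Lemma \ref{lem-1} with $\rho=2,q=1$ yields
\[
\|D^{\varepsilon}(t)\|_H\le\|\mathcal E^{\varepsilon}(t)\|_H+C\int_0^t(t-s)^{-3/4}\|D^{\varepsilon}(s)\|_H\,ds,
\]
and, as $D^{\varepsilon}$ is a.s.\ bounded on $[0,T]$, iterating this inequality (the self-convolution of $(t-s)^{-3/4}$ becomes bounded after finitely many steps) and then applying the classical Gronwall lemma — just as in the proof of Theorem \ref{thm-5} — gives $\sup_t\|D^{\varepsilon}(t)\|_H\le C(K,T,C_0)\sup_t\|\mathcal E^{\varepsilon}(t)\|_H\to0$ in probability.

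I expect the main obstacle to be the uniform-in-$t$ control of the pieces of $\mathcal E^{\varepsilon}$ (and of the Gronwall step) carrying the kernel $\partial_yG_{t-s}$, which is not monotone in $t$ and only lies in $L^{r}$ for $r<3/2$, so the supremum in $t$ cannot be moved inside the integral; here the heat-kernel bounds of Lemma \ref{lem-1} and the estimates (\ref{eqq-6}), (\ref{eq-29}) are indispensable, and the fact that $\bar X^{\varepsilon,h^{\varepsilon}}$ is only bounded in probability forces the whole argument to be run on a high-probability event rather than in expectation. A related delicate point is the term $\int_0^t\!\int_0^1 G_{t-s}(\sigma^{\varepsilon}-\sigma(\cdot,u^{0}))h^{\varepsilon}$: since $h^{\varepsilon}$ converges only weakly, one needs genuine strong ($L^{p}$, $p$ large) smallness of $\sigma^{\varepsilon}-\sigma(\cdot,u^{0})$, obtained by interpolating its $L^{2}$-smallness against its uniform bound, which is where the boundedness of $\sigma$ in (H1) enters.
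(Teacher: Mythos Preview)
Your proposal is correct but follows a genuinely different route from the paper. The paper argues via tightness and Skorokhod: it shows $(\bar X^{\varepsilon,h^\varepsilon},h^\varepsilon,Z^\varepsilon)$ is tight in $C([0,T];H)\times T_M\times C([0,T];H)$ using Proposition \ref{prp-6}, passes to a subsequential limit $(X,h,0)$, invokes the Skorokhod representation theorem to get almost-sure convergence on a new probability space, and then identifies the limit by passing to the limit directly in the equation (\ref{ee-8}). You instead compare $\bar X^{\varepsilon,h^\varepsilon}$ with the skeleton solution $X^{h^\varepsilon}$ driven by the \emph{random} control, show the difference vanishes in probability via a singular Gronwall argument, and conclude with the continuous-mapping theorem, reusing the continuity of $h\mapsto X^h$ proved in Theorem \ref{thm-5}. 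Your route is more elementary---no tightness, no Skorokhod, no identification step---and gives a quantitative bound on the good event; the paper's route is the standard weak-convergence template and does not require isolating an explicit error term $\mathcal E^\varepsilon$. Both approaches ultimately rest on the same ingredients: the Garsia-type control of the stochastic convolution $J_1^\varepsilon$ from Proposition \ref{prp-6}, the kernel bounds of Lemma \ref{lem-1}, and the localisation to $\{\sup_t\|\bar X^{\varepsilon,h^\varepsilon}(t)\|_H^2\le C\}$ supplied by Lemma \ref{lem-6}. Note incidentally that your worry about $\sup_t$ for the $\partial_yG$-terms in $\mathcal E^\varepsilon$ is milder than you suggest: Lemma \ref{lem-1} with $\rho=2,\ q=1$ already gives $\|J(v)(t)\|_H\le C\int_0^t(t-s)^{-3/4}\|v(s)\|_{L^1}\,ds\le 4CT^{1/4}\sup_s\|v(s)\|_{L^1}$, which is uniform in $t$ once $\|v(s)\|_{L^1}$ is controlled on the good event.
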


\begin{proof}
Suppose that $\{h^\varepsilon; \varepsilon>0\}\subset \mathcal{A}_M$ and $h^\varepsilon$ converges to $h$ as $T_M-$valued random elements in distribution.
By Girsanov's theorem, we obtain $\bar{X}^{\varepsilon,h^{\varepsilon}}(\cdot)=\mathcal{G}^{\varepsilon}\Big(W(\cdot)+\lambda(\varepsilon)\int^{\cdot}_0\int^{\cdot}_0h^\varepsilon(s,y)dyds\Big)$.
Consider
\begin{eqnarray}
Z^{\varepsilon}(t,x)=\frac{1}{\lambda(\varepsilon)}\int^t_0\int^1_0G_{t-s}(x,y)\sigma(s,y, u^0(s,y)+\sqrt{\varepsilon}\lambda(\varepsilon)\bar{X}^{\varepsilon, h^{\varepsilon}}(s,y))W(dyds),
\end{eqnarray}
with the initial value $Z^{\varepsilon}(0)=0$. Applying the same method as the proof of (\ref{equat-2}), we get
\begin{eqnarray}
\lim_{\varepsilon\rightarrow 0}E\sup_{t\in [0,T]}\|Z^{\varepsilon}(t)\|^2_H=0
\end{eqnarray}
and $\{Z^{\varepsilon}\}$ is tight in $C([0,T];H)$.
Set
\[
\Pi=\Big(C([0,T];H), T_M, C([0,T];H)\Big).
\]
By Proposition \ref{prp-6}, we know that the family $\{(\bar{X}^{\varepsilon, h^{\varepsilon}}, h^{\varepsilon}, Z^{\varepsilon}); \varepsilon\in (0,1]\}$ is tight in $\Pi$. Let $(X,h,0)$ be any limit point of $\{(\bar{X}^{\varepsilon, h^{\varepsilon}}, h^{\varepsilon}, Z^{\varepsilon}); \varepsilon\in (0,1]\}$. We will show that $X$ has the same law as $\mathcal{G}^0(\int^{\cdot}_0\int^{\cdot}_0h(s,y)dyds)$
, and in fact $\bar{X}^{\varepsilon, h^{\varepsilon}}$ converges in distribution to $X$ in $C([0,T];H)$ as $\varepsilon\rightarrow 0$, which implies Theorem \ref{thm-4}.

%
%
By the Skorokhod representation theorem, there exists a stochastic basis $(\Omega^1, \mathcal{F}^1, \{\mathcal{F}^1_t\}_{t\in [0,T]}, {P}^1)$ and $\Pi-$valued random variables $(\tilde{U}^{\varepsilon}, \tilde{h}^{\varepsilon},\tilde{Z}^{\varepsilon}),(\tilde{U}, \tilde{h},0)$ on this basis, such that $(\tilde{U}^{\varepsilon}, \tilde{h}^{\varepsilon},\tilde{Z}^{\varepsilon})$ (resp. $(\tilde{U}, \tilde{h},0)$) has the same law as $(\bar{X}^{\varepsilon,h^{\varepsilon}},h^{\varepsilon},Z^{\varepsilon})$ (resp. $(X,h,0)$), and $(\tilde{U}^{\varepsilon}, \tilde{h}^{\varepsilon},\tilde{Z}^{\varepsilon})\rightarrow (\tilde{U}, \tilde{h},0)$, $P^1-$a.s. in $\Pi$. From the equation satisfied by $(\bar{X}^{\varepsilon,h^{\varepsilon}},h^{\varepsilon},Z^{\varepsilon})$ , we see that $(\tilde{U}^{\varepsilon}, \tilde{h}^{\varepsilon},\tilde{Z}^{\varepsilon})$ satisfies
 \begin{eqnarray}\notag
&&\tilde{U}^{\varepsilon}(x,t)-\tilde{Z}^{\varepsilon}(x,t)\\ \notag
&=& \int^t_0\int^1_0G_{t-s}(x,y)\sigma(s,y, u^0(s,y)+\sqrt{\varepsilon}\lambda(\varepsilon)\tilde{U}^{\varepsilon}(s,y))\tilde{h}^{\varepsilon}(s,y)dyds\\ \notag
&&+\frac{1}{\sqrt{\varepsilon}\lambda(\varepsilon)}\int^t_0\int^1_0G_{t-s}(x,y)\Big(b(s,y,u^0(s,y)+\sqrt{\varepsilon}\lambda(\varepsilon)\tilde{U}^{\varepsilon}(s,y))-b(s,y,u^0(s,y))\Big)dyds\\
\label{ee-8}
&&-\frac{1}{\sqrt{\varepsilon}\lambda(\varepsilon)}\int^t_0\int^1_0\partial_yG_{t-s}(x,y)\Big(g(s,y,u^0(s,y)+\sqrt{\varepsilon}\lambda(\varepsilon)\tilde{U}^{\varepsilon}(s,y))-g(s,y,u^0(s,y))\Big)dyds.
\end{eqnarray}

and
\begin{eqnarray}\notag
&&P^1(\tilde{U}^{\varepsilon}-\tilde{Z}^{\varepsilon}\in C([0,T];H))\\ \notag
&=& P(\bar{X}^{\varepsilon, h^{\varepsilon}}-Z^{\varepsilon}\in C([0,T];H))\\
\label{eee-28}
&=&1.
\end{eqnarray}
Let $\Omega^1_0$ be the subset of  $\Omega^1$ such that $(\tilde{U}^{\varepsilon}, h^{\varepsilon},\tilde{Z}^{\varepsilon})\rightarrow (\tilde{U},\tilde{h},0)$ in $\Pi$, we have $P^1(\Omega^1_0)=1$. For any $\tilde{\omega}\in \Omega^1_0$, we have
\begin{eqnarray}\label{ee-12}
\sup_{t\in [0,T]}\|\tilde{U}^{\varepsilon}(\tilde{\omega},t)-\tilde{U}(\tilde{\omega},t)\|^2_H\rightarrow 0,\ {\rm{as}} \ \varepsilon\rightarrow 0.
\end{eqnarray}
Set $\tilde{\eta}^{\varepsilon}(\tilde{\omega},t)=\tilde{U}^{\varepsilon}(\tilde{\omega},t)-\tilde{Z}^{\varepsilon}(\tilde{\omega},t)$, by (\ref{ee-8}), $\tilde{\eta}^{\varepsilon}(\tilde{\omega},x,t)$ satisfies
\begin{eqnarray*}\notag
&&\tilde{\eta}^{\varepsilon}(\tilde{\omega},x,t)\\ \notag
&=& \int^t_0\int^1_0G_{t-s}(x,y)\sigma(s,y, u^0(s,y)+\sqrt{\varepsilon}\lambda(\varepsilon)(\tilde{\eta}^{\varepsilon}(\tilde{\omega},s,y)+\tilde{Z}^{\varepsilon}(\tilde{\omega},s,y)))\tilde{h}^{\varepsilon}(\tilde{\omega},s,y)dyds\\ \notag
&&+\frac{1}{\sqrt{\varepsilon}\lambda(\varepsilon)}\int^t_0\int^1_0G_{t-s}(x,y)\Big(b(s,y,u^0+\sqrt{\varepsilon}\lambda(\varepsilon)(\tilde{\eta}^{\varepsilon}(\tilde{\omega},s,y)+\tilde{Z}^{\varepsilon}(\tilde{\omega},s,y)))-b(s,y,u^0)\Big)dyds\\
\label{ee-9}
&&-\frac{1}{\sqrt{\varepsilon}\lambda(\varepsilon)}\int^t_0\int^1_0\partial_yG_{t-s}(x,y)\Big(g(s,y,u^0+\sqrt{\varepsilon}\lambda(\varepsilon)(\tilde{\eta}^{\varepsilon}(\tilde{\omega},s,y)+\tilde{Z}^{\varepsilon}(\tilde{\omega},s,y)))-g(s,y,u^0)\Big)dyds.
\end{eqnarray*}
with initial value $\tilde{\eta}^{\varepsilon}(\tilde{\omega},x,0)=0$. Moreover, we deduce from (\ref{eee-28}) that
\begin{eqnarray}\label{eee-29}
\sup_{t\in [0,T]}\|\tilde{\eta}^{\varepsilon}(\tilde{\omega},t)\|_H<\infty.
\end{eqnarray}
Taking into account the following facts
\begin{eqnarray}
\lim_{\varepsilon\rightarrow 0}\sup_{t\in [0,T]}\|\tilde{Z}^{\varepsilon}(\tilde{\omega}, t)\|^2_H=0, \quad \tilde{U}^{\varepsilon}=\tilde{\eta}^{\varepsilon}+\tilde{Z}^{\varepsilon},
\end{eqnarray}
and by (H1), (\ref{eee-29}), (\ref{ee-3}), Lemma \ref{lem-1} and Lemma \ref{lem-5}, we have
\begin{eqnarray}\notag
&&\lim_{\varepsilon\rightarrow 0}\sup_{t\in [0,T]}\|\tilde{U}^{\varepsilon}(\tilde{\omega}, t)-\hat{U}(\tilde{\omega}, t)\|^2_H\\ \notag
&\leq& \lim_{\varepsilon\rightarrow 0}\sup_{t\in [0,T]}\|\tilde{\eta}^{\varepsilon}(\tilde{\omega}, t)-\hat{U}(\tilde{\omega}, t)\|^2_H+\lim_{\varepsilon\rightarrow 0}\sup_{t\in [0,T]}\|\tilde{Z}^{\varepsilon}(\tilde{\omega}, t)\|^2_H\\
\notag
&\leq&\lim_{\varepsilon\rightarrow 0}\sup_{t\in [0,T]}\|\tilde{\eta}^{\varepsilon}(\tilde{\omega}, t)-\hat{U}(\tilde{\omega}, t)\|^2_H\\
\label{ee-11}
&=&0,
\end{eqnarray}
where $\hat{U}(t):=\hat{U}(\tilde{\omega},t)$ satisfies
\begin{eqnarray}\notag
\hat{U}(x,t)&=&\int^t_0\int^1_0G_{t-s}(x,y)\partial_rb(s,y,u^0(s,y))\hat{U}(s,y)dyds\\ \notag
&& -\int^t_0\int^1_0\partial_yG_{t-s}(x,y)\partial_rg(s,y,u^0(s,y))\hat{U}(s,y)dyds\\
\label{ee-10}
&& +\int^t_0\int^1_0G_{t-s}(x,y)\sigma(s,y,u^0(s,y))\tilde{h}(s,y)dyds.
\end{eqnarray}

Hence, by (\ref{ee-12}) and (\ref{ee-11}), we deduce that $\tilde{U}=\hat{U}=\mathcal{G}^0(\int^{\cdot}_0\int^{\cdot}_0\tilde{h}(s,y)dyds)$, then $\tilde{U}$ has the same law as $\mathcal{G}^0(\int^{\cdot}_0\int^{\cdot}_0h(s,y)dyds)$. Since $\bar{X}^{\varepsilon,h^{\varepsilon}}$ has the same law as $\tilde{U}^{\varepsilon}$ on $C([0,T];H)$ and by (\ref{ee-11}), we deduce that $\mathcal{G}^{\varepsilon}(W(\cdot)+\lambda(\varepsilon)\int^{\cdot}_0\int^{\cdot}_0h^{\varepsilon}(s,y)dyds)$ converges in distribution to  $\mathcal{G}^0(\int^{\cdot}_0\int^{\cdot}_0h(s,y)dyds)$
as $\varepsilon\rightarrow 0$. We complete the proof.
\end{proof}


\def\refname{ References}

\end{document}